\newcommand{\restr}{%
  \,\raisebox{-.127ex}{\reflectbox{\rotatebox[origin=br]{-90}{$\lnot$}}}\,%
}
\theoremstyle{definition}
\newtheorem{definition}{Definition}[section]
\theoremstyle{remark}
\newtheorem{remark}[definition]{Remark}
\theoremstyle{theorem}
\newtheorem{theorem}[definition]{Theorem}
\theoremstyle{theorem}
\newtheorem{lemma}[definition]{Lemma}
\theoremstyle{remark}
\theoremstyle{theorem}
\newtheorem{corollary}[definition]{Corollary}
\theoremstyle{theorem}
\newtheorem{proposition}[definition]{Proposition}
\theoremstyle{theorem}
\numberwithin{equation}{section}
\def\Xint#1{\mathchoice
    {\XXint\displaystyle\textstyle{#1}}%
    {\XXint\textstyle\scriptstyle{#1}}%
    {\XXint\scriptstyle\scriptscriptstyle{#1}}%
    {\XXint\scriptscriptstyle\scriptscriptstyle{#1}}%
    \!\int}
\def\XXint#1#2#3{{\setbox0=\hbox{$#1{#2#3}{\int}$}
      \vcenter{\hbox{$#2#3$}}\kern-.5\wd0}}
\def\mint{\Xint-}
\definecolor{ao}{rgb}{0.0, 0.5, 0.0}
\DeclareMathOperator*{\aplim}{ap-\lim}
\DeclareMathOperator*{\aplims}{ap-\limsup}
\DeclareMathOperator*{\aplimi}{ap-\liminf}
\newcommand{\Om}{\Omega}
\newcommand{\R}{\mathbb{R}}
\newcommand{\C}{\mathbb{C}}
\newcommand{\HH}{\mathcal{H}}
\newcommand{\di}{\mathrm{d}}
\newcommand{\e}{\mathrm{E}}
\newcommand{\EEE}{\color{black}}
\newcommand{\RRR}{\color{black}}
\def\namedlabel#1#2{\begingroup
    #2%
    \def\@currentlabel{#2}%
    \phantomsection\label{#1}\endgroup
}
\title[Generalized bounded deformation in non-Euclidean settings]{Generalized bounded deformation \\in non-Euclidean settings}
\author[S. Almi]{Stefano Almi}
\address[Stefano Almi]{Universit\'a di Napoli Federico II, Dipartimento di Matematica e Applicazioni R. Caccioppoli, via Cintia, Monte S. Angelo, 80126 Naples, Italy. }
\email{stefano.almi@unina.it}
\author[E. Tasso]{Emanuele Tasso}
\address[Emanuele Tasso]{Institute of Analysis and Scientific Computing, TU Wien, Wiedner-Hauptstrasse 8-10, 1040 Vienna, Austria}
\email{emanuele.tasso@tuwien.ac.at}
 \subjclass[2020]{49Q20, 
 			   26B30. 
			   	   }
\keywords{Generalized functions of bounded deformation, Riemannian manifolds, Geodesics, Curvilinear symmetric gradient, Jump slicing.}
\begin{document}

\begin{abstract}
We introduce a new space of generalized functions of bounded deformation $GBD_{F}$, made of functions $u$ whose one-dimensional slice $u(\gamma) {\, \cdot\,} \dot{\gamma}$ has bounded variation in a generalized sense for all curves $\gamma$ solution of the second order ODE $\ddot{\gamma} = F(\gamma, \dot{\gamma})$ for a fixed field $F$. For $u \in GBD_{F}$ we study the structure of the jump set in connection its slices and prove the existence of a curvilinear approximate symmetric gradient. With a particular choice of $F$ in terms of the Christoffel symbols of a Riemannian manifold~${\rm M}$, we are able to define and recover similar properties for a space of $1$-forms on~$\rm M$ which have generalized bounded deformation in a suitable sense.
\end{abstract}

\maketitle

\section{Introduction}
\label{s:intro}

In the last decades the study of Free Discontinuity functionals has lead to the development of different notions of functions with bounded variation. We recall here the original $BV$ and $BD$ spaces~\cite{MR1480240, afp, MR711964} and the unifying approach~\cite{arr, MR4024554} of functions of bounded $\mathcal{A}$-variation~$BV^{\mathcal{A}}$. In a more applied framework, the spaces~$GBV$ and~$GBD$ have been introduced in~\cite{afp, dal} to supply to the lack of integrability of the field $u$ and of the jump~$[u]$ with respect to the $(n-1)$-dimensional Hausdorff measure~$\mathcal{H}^{n-1}$ restricted to the jump set~$J_{u}$. The spaces~$GBD$ and~$GSBD$, in particular, have found applications in the study of functionals of the form
\begin{equation}
\label{e:intro0}
\int_{\Om} | e(u)|^{2} \, \di x + \HH^{n-1}(J_{u})\,,
\end{equation}
where $e(u)$ denotes the approximate symmetric gradient of~$u$. In this respect, we mention results on compactness and lower semicontinuity~\cite{AT_22, CCS_22, Cha-Cri_23, MR4210722, MR4205185, MR4076067, MR4079210}, Ambrosio-Tortorelli approximations~\cite{MR4215197, MR3780140, MR3928751, MR3249854, MR3247391}, dimension reduction, homogenization, atomistic derivation, and nonlocal approximations~\cite{MR4216047, ARS_22, AT_21, MR3593536, KreFriZem22, Fri-Per-Sol_hom, GG_22, MarSol22, MR3669836, Sch-Zem2, Sch-Zem}, linearization in elasticity~\cite{ADF_22, MR3634030, MR4139444}, and modeling of fracture, epitaxially strained films, and stress-driven rearragnement instabilities~\cite{MR4097334, MR4332457, MR3739927, MR4121137}.

The common feature of the above mentioned works is that the underlying ambient space is of euclidean type. Nevertheless, there are number of interesting applications in which the reference configuration is represented by a Riemannian manifold $\rm (M,g)$. For instance, when deriving model of brittle fractures on linearly elastic shells embedded in $\mathbb{R}^3$ via dimension reduction, the limiting energy and function spaces have to take into account the geometry of the shell. This was noticed, e.g., in \cite{MR4216047, LeDret1996} where the limit displacements are expressed in curvilinear coordinates, hinging on the fact that one still has a control of the full gradient displacements. However, in the linear elastic case, the lack of a control on the full gradient requires the definition of an intrinsic $BD$-like space which is sensible of the geometry of the reference manifold. Indeed, the classical approach to study the structure of $BD$-functions in $\mathbb{R}^n$ relies on slicing techniques, where the vector fields are restricted to and projected onto lines, which are the \emph{geodesics} of $\mathbb{R}^n$. The equivalent procedure on $\rm M$ consists in replacing lines with the geodesics given by the metric $\rm g$. Furthermore, following the approach in~\cite{MR1757535}, differently from the euclidean case in which displacements are modelled as vector fields, it is more convenient to use the representation as one-forms $\omega$ on $\rm M$. This leads us to a local description of $\omega$ in terms of a vector field $u$ in $\mathbb{R}^n$ whose entries are its contravariant components. If $\omega$ is smooth, its symmetric gradient writes in curvilinear coordinates as the matrix
\cite[Section~1.2]{MR1757535}
\begin{equation}
\label{e:intro3}
(E(u))_{ij} = \frac{1}{2}  ( \partial_{i} u_{j} + \partial_{j} u_{i})  - \sum_{\ell = 1}^{n} \Gamma^{\ell}_{ij} u_{\ell} \qquad i, j= 1, \ldots, n\,,
\end{equation}
where $\partial_{i}$ denotes the derivative with respect to~$x_{i}$ and $\Gamma^{\ell}_{ij}$ are the Christoffel symbols. Moreover if $\gamma$ is a geodesic of $\rm M$ expressed in coordinates, for $t \in \R$ it holds true
\begin{align}
\label{e:intro2}
 \frac{\di}{\di \tau}\Big|_{\tau = t} u (\gamma(\tau)) \cdot \dot{\gamma} (\tau) &  =   \nabla{u} (\gamma(t)) \dot{\gamma} (t) \cdot \dot{\gamma} (t)  - \!\!\!  \sum_{\ell, i, j = 1}^{n} \Gamma^{\ell}_{ij} (\gamma(t)) u_{\ell} (\gamma(t)) \dot{\gamma}_{i} (t) \dot{\gamma}_{j} (t) 
 \\
 &
   =    Eu (\gamma(t))  \dot{\gamma} (t) \cdot  \dot{\gamma} (t)\,, \nonumber
\end{align}
where the first equality follows from the geodesics equation $\ddot{\gamma}_\ell = - \sum_{i, j=1}^{n} \Gamma^{\ell}_{ij} (\gamma) \dot{\gamma}_{i}  \dot{\gamma}_{j}$.


Motivated by~\eqref{e:intro2} and \cite{dal}, we consider a space of functions whose one-dimensional slices~$t \mapsto u (\gamma(t)) \cdot \dot{\gamma} (t)$ have bounded variation when computed on solutions~$\gamma$ of suitable second order ODEs. More precisely, we fix a smooth field $F \colon \R^{n} \times \R^{n} \to \R^{n}$ which is a quadratic form in the second variable (cf.~\eqref{hp:F}) and a class of {\em curvilinear projections} $(P_{\xi})_{\xi \in \mathbb{S}^{n-1}}$ from $\Om$ to $\xi^{\bot}$ satisfying the {\em transversality} condition of Definition~\ref{d:transversal} (see also \cite[Definition~2.4]{hov}) and whose level sets $P_{\xi}^{-1}(y)$ are   the images of the map $t \mapsto \varphi_{\xi} (y + t\xi)$ solution of $\ddot{\gamma} = F(\gamma, \dot \gamma)$. Moreover, for $E \subseteq \Om$, $u \colon \Om \to \R^{n}$, $\xi \in \mathbb{S}^{n-1}$, and $y \in \xi^{\bot}
 $ we define the slices 
\begin{align}
\nonumber
E^{\xi}_{y} & := \{ t \in \R: \varphi_{\xi}(y + t\xi) \in E\}\,,
\\
\label{e:introslice}
\hat{u}^{\xi}_{y} (t) & := u(\varphi_{\xi} (y + t\xi)) \cdot \dot{\varphi}_{\xi} (y + t\xi) \qquad \text{for $t \in \Om^{\xi}_{y}$}\,.
\end{align}
Then, we say that a measurable function $u \colon \Om \to \R^{n}$ belongs to $GBD_{F}(\Om)$ if there exists a positive bounded Radon measure $\lambda$ on~$\Om$ such that for every $\xi\in \mathbb{S}^{n-1}$, for $\mathcal{H}^{n-1}$-a.e.~$y \in \xi^{\bot}$ we have that $\hat{u}^{\xi}_{y} \in BV_{loc} (\Om^{\xi}_{y})$ and 
\begin{align}
\int_{\xi^{\bot}} | \mathrm{D} (\tau (\hat{u}^{\xi}_{y})) | ( \Om^{\xi}_{y}) \, \di \mathcal{H}^{n-1} (y) \leq \| \dot{\varphi}_{\xi}\|_{\infty} \, {\rm Lip} (P_{\xi})^{n-1} \lambda(\Om)\,,
\end{align}
whenever $\tau \in C^{1} (\R)$ is such that $-\frac{1}{2} \leq \tau \leq \frac{1}{2}$ and $0 \leq \tau'\leq 1$. We refer to Section~\ref{s:definition} for the precise definition. We remark that if $F=0$ and the family $(P_{\xi})_{\xi \in \mathbb{S}^{n-1}}$ is the family of orthogonal projections~$\pi_{\xi}\colon \R^{n} \to \xi^{\bot}$, the space $GBD_{F}(\Om)$ coincides with the space~$GBD(\Om)$ introduced in~\cite{dal}. 

By means of the space $GBD_F(\Omega)$, with the choice
\begin{align}
\label{e:choiceF}
F_{\ell} (x, \zeta) = - \sum_{i, j=1}^{n} \Gamma^{\ell}_{ij} (x) \zeta_{i}  \zeta_{j} \qquad \text{for $\ell= 1, \ldots, n$, $x \in \Om$, and $\zeta \in \R^{n}$},
\end{align}
it is possible to define an intrinsic space of measurable one-forms on $\rm M$ having generalised bounded deformation, which we denote by $GBD(\rm{M})$. We refer for all details to Sections \ref{s:GBD-M} and \ref{s:equivalence}.

In this paper we mainly focus on the structure of the jump set in relation with one dimensional slices \eqref{e:introslice} and on the existence of an approximate symmetric gradient. In this regard we make use of the recent result \cite{del} ensuring that the jump set of any measurable function is countably $(n-1)$-rectifiable. However, it is of fundamental importance to establish a precise relation between the slices of the jump set and the jump sets of the slices. In this regard, the non-linear nature of our setting leads to a crucial point, which is the lack of symmetry required to exploit the parallelogram law \cite{MR1480240} (see also \cite[Formula 7.1]{dal} and \cite[Proposition 4.4]{arr}) and the possibility to perform codimension-one slicing on which the $BD$ and $BV^{\mathcal{A}}$ theories hinge. In order to overcome this difficulty we have to make a stronger assumption on the field~$F$. Namely, we suppose $F$ to satisfy a condition which we call \emph{Rigid Interpolation} \eqref{hp:F2}. Such a condition requires a (local) control on the $L^\infty$-norm of the curvilinear symmetric gradient, seen as an operator acting on smooth vector fields, in terms of a discrete semi-norm defined on the vertices of $n$-dimensional simplexes of $\mathbb{R}^n$ (see \eqref{e:rip5}). Appealing to the general slicing criterion developed in \cite{AT_22-preprint} we combine Lemma \ref{l:everyxi} to guarantee that, given a family of curvilinear projections $(P_\xi)_{\xi \in \mathbb{S}^{n-1}}$,  the jump set of $u \in GBD_F(\Omega)$ can be sliced by means of the jump sets of the one dimensional restrictions $\hat{u}^\xi_y$ (see Theorem~\ref{p:euju}). In addition, since the choice of the field $F$ dictated by \eqref{e:choiceF} does satisfy the Rigid Interpolation property, the above mentioned result obtained in the $GBD_F$-context can be easily transferred to the Riemannian case $GBD(\rm{M})$ (see Theorem \ref{t:jumpslicingform}). 

Eventually, we prove in Theorem~\ref{t:apsym} that every $u \in GBD_{F}(\Om)$ admits a.e.~in~$\Om$ an approximate symmetric gradient~$\tilde{e} (u) \in \mathbb{M}^{n}_{sym}$, where $\mathbb{M}^{n}_{sym}$ denotes the space of symmetric matrices of order~$n$. In this setting, we notice that $\tilde{e} (u)$ is not integrable in~$\Om$. Nevertheless, we may define a ``curvilinear'' approximate symmetric gradient $e(u) \colon  \Om \to \mathbb{M}^{n}_{sym}$ as
 \begin{equation*}
e(u)(x)\zeta \cdot \zeta := \tilde{e}(u)(x) \zeta \cdot \zeta - u(x) \cdot F(x, \zeta) \qquad \text{for a.e.~$x \in \Om$ and every $\zeta \in \mathbb{R}^n$}.
\end{equation*}
 Then, it turns out that $e(u) \in L^{1}(\Om; \mathbb{M}^{n}_{sym})$, which is consistent with~\eqref{e:intro3}. Furthermore,~$e(u)$ can be reconstructed by means of the approximate gradients of the one-dimensional slices: for $\mathcal{H}^{n-1}$-a.e.~$\xi \in \mathbb{S}^{n-1}$ it holds
\begin{displaymath}
\nabla{\hat{u}}^{\xi}_{y} (t) = (e(u))^{\xi}_{y} \, \dot{\varphi}_{\xi} (y + t\xi) \cdot \dot{\varphi}_{\xi} (y + t\xi) \qquad \text{for $\HH^{n-1}$-a.e.~$y \in \xi^{\bot}$, for a.e.~$t \in \Om^{\xi}_{y}$.}
\end{displaymath}
In regards to the Riemmanian case, the existence of a curvilinear approximate symmetric gradient in the $GBD_F$-context can be used to prove that every one form $\omega \in GBD(\rm{M})$ admits an approximate symmetric gradient $e(\omega)(p)$ in the sense of~\eqref{e:apsymmanifold1} for a.e. $p \in \rm M$. We refer to Theorem \ref{t:apsymmanifold} for the relevant properties of $e(\omega)$.

\subsection*{Outlook.} In this paper we have introduced a new notion of functions with generalized bounded deformation by working on one-dimensional slices that are solutions of a second order ODE driven by a smooth field~$F$. In particular, we have shown that, with a suitable choice of~$F$, this leads to the definition of a space of functions with generalized bounded deformation on a Riemannian manifold~${\rm M}$. Possible applications of such space may be found in the modeling of brittle fracture in Riemannian setting (see, e.g.,~\cite{MR4216047, MR1757535, MR3574979}). For instance, Griffith's energy~\eqref{e:intro0} on $\rm M$ is well defined in terms of the space $GBD(\rm{M})$ and writes in the following form
\begin{equation*}
    \int_{\rm{M}} |e(\omega)|^2 \, d\mathcal{H}^n + \mathcal{H}^{n-1}(J_\omega),
\end{equation*}
where $J_\omega$ denotes the jump set of $\omega$ (see Definition \ref{d:jump-omega}).

In the setting of dimension reduction problems for thin structures, such as the shallow shell~\cite{MR1151270, Mag-Mor} $S_{\rho} := \{ (x', \rho \theta(x')): x' \in U \}$ for $\rho>0$, $ U \subseteq \R^{2}$ open bounded set with Lipschitz boundary, and $\theta \in C^{\infty} (U)$, one is led to study the $\Gamma$-limit as $\rho\to 0$ of the energy functional
\begin{equation}
\label{e:shallow}
\int_{U \times (- 1, 1) } \C_{\rho} e_{\rho} (u) \cdot e_{\rho}(u) \,  \di x + \int_{J_{u}} \phi_{\rho} (\nu_{u}) \, \di \mathcal{H}^{n-1}\,.
\end{equation}
In \eqref{e:shallow}, $\C_{\rho}$ is a suitably rescaled elasticity tensor, $\phi_{\rho}$ is a positive definite quadratic form smoothly dependent on~$\rho$ and~$\theta$, and $e_{\rho}$ is of the form
\begin{align*}
& e_{\alpha\beta, \rho} (u) = \frac{1}{2} ( \partial_{\alpha} u_{\beta} + \partial_{\beta} u_{\alpha}) - u_{p} \Gamma^{p}_{\alpha\beta}(\rho)\,,\\
& e_{\alpha 3, \rho} (u) = \frac{1}{\rho} \Big( \frac{1}{2} ( \partial_{3} u_{\alpha} + \partial_{\alpha} u_{3}) - u_{\sigma} \Gamma^{\sigma}_{\alpha3} (\rho)\Big)\,,\\
& e_{33, \rho}(u) = \frac{1}{\rho^{2}} \partial_{3} u_{3}\,,
\end{align*}
where the (rescaled) Christoffel symbols $\Gamma^{p}_{ij}(\rho)$ behave as
\begin{align*}
&\Gamma^{\sigma}_{\alpha\beta} ( \rho) = O(\rho^{2})\,,\\
& \Gamma^{3}_{\alpha\beta} (\rho) = - \frac{ \partial^{2}_{\alpha\beta} \theta}{\sqrt{ 1 + \rho^{2} | \nabla \theta |^{2}}} + O(\rho)\,,\\
& \Gamma^{\sigma}_{\alpha3}(\rho) = O(\rho^{2})\,.
\end{align*}
As $\rho\to0$ we obtain the formal limit
\begin{equation*}
    \int_{U \times (-1, 1)} \hat{\C} e(u) {\, \cdot\,} \e(u) \, \di x + \int_{J_{u}} \phi(\nu_{u}) \, \di \mathcal{H}^{n-1}\,,
\end{equation*}
where $\hat{\C}$ and $\phi$ are limit of~$\C_{\rho}$ and of~$\phi_{\rho}$, respectively, while the displacement $u$ belongs to the space $GSBD_{F} ( U \times (-1, 1))$ for the choice
\begin{equation*}
F_{1}(x, \zeta) = F_{2} (x, \zeta) = 0 \,, \qquad  F_{3} (x, \zeta) = - \nabla^{2}\theta \, \zeta {\, \cdot\,} \zeta \qquad \text{for  $x \in U \times (-1, 1)$ and $\zeta \in \R^{3}$}.
\end{equation*}
Problems such as compactness and lower-semicontinuity in $GBD(\rm M)$ and in $GBD_{F} (\Om)$, the rigorous computation of the $\Gamma$-limit of~\eqref{e:shallow}, as well as the applications to more general reduction problems for brittle linearly elastic shells will be the subjects of future investigations.

\subsection*{Plan of the paper.}
In Sections~\ref{s:GBD-M} and \ref{s:definition} we give the main assumptions of the paper and present the notions of the  space $GBD({\rm M})$ of functions of generalized bounded deformation on a Riemannian manifold~${\rm M}$ and of $GBD_{F}(\Om)$, respectively. In particular, we show in Section~\ref{s:equivalence} that the two spaces are equivalent on every chart $(U, \psi)$ of~$M$. Section~\ref{s:curvilinear} contains a number of technical results that will be used in Section~\ref{s:structure} in the study of the structure of~$GBD_{F}(\Om)$ and of~$GBD({\rm M})$.

\section{Preliminaries and notation}

\subsection{Basic notation}
For $n , k \in \mathbb{N}$, we denote by~$\mathcal{L}^{n}$ and by~$\mathcal{H}^{k}$ the Lebesgue and the $k$-dimensional Hausdorff measure in~$\R^{n}$, respectively. The symbol $\mathbb{M}^{n \times n}$ stands for the space of square matrices of order~$n$ with real coefficients, while~$\mathbb{M}^{n \times n}_{sym}$ denotes its subspace of symmetric matrices. The set $\{e_{i}\}_{i=1}^{n}$ denotes the canonical basis of~$\R^{n}$ and $| \cdot|$ is the Euclidean norm on~$\R^{n}$. For every $\xi \in \R^{n}$, the map~$\pi_{\xi} \colon \R^{n} \to \R^{n}$ is the orthogonal projection over the hyperplane orthogonal to~$\xi$, which will be indicated by~$\xi^{\bot}$. For $x \in \R^{n}$ and~$\rho>0$, ${\rm B}_{\rho}(x)$ stands for the open ball of radius~$\rho$ and center~$x$ in~$\R^{n}$. For every $A \subseteq \R^{n} \times \mathbb{S}^{n-1}$, every $\xi \in \mathbb{S}^{n-1}$, and every $x \in \R^{n}$ we will denote 
$$ A_{\xi} := \{ (x \in \R^{n}: (x, \xi) \in A\} \qquad A_{x} := \{ \xi \in \mathbb{S}^{n-1}: (x, \xi) \in A\}\,.$$ \EEE

\RRR We report the definitions of countably rectifiable set in~$\R^{n}$.

\begin{definition}[Countably rectifiable set]
 We say that a set $R \subseteq \Omega$ is countably $(n-1)$-rectifiable if and only if $R$ equals a countable union of images of Lipschitz maps $(f_i)_i$ from some bounded sets $E_i \subset \mathbb{R}^{n-1}$ to $\Omega$. 
\end{definition} \EEE

Given~$U_{j}$ a sequence of open subsets of~$\R^{n}$, $\Omega$ open subset of~$\R^{n}$, and~$f_{j} \in C^{\infty}(U_{j}; \R^{k})$, we say that $f_{j} \to f$ in~$C^{\infty}_{loc} (\Om; \R^{k})$ if~$f \in C^{\infty}(\Om; \R^{k})$, $U_{j} \nearrow \Om$, and $f_{j} \to f$ in~$C^{\infty}(W; \R^{k})$ for every $W \Subset \Om$. \RRR We recall the definition of jump set of a measurable function.

\begin{definition}
Let $\Omega$ be an open subset of~$\R^{n}$ and let $u \colon \Omega \to \mathbb{R}^m$ be measurable. We say that $x \in \Omega$ belongs to $J_u$ if and only if there exists $(u^+(x),u^-(x),\nu(x)) \in \mathbb{R}^m \times \mathbb{R}^m \times \mathbb{S}^{n-1}$ such that
\[
\aplim_{\substack{z \to x \\ \pm(z-x) \cdot \nu (x) >0}} u(z)=u^\pm(x).
\]
\end{definition} 

We further recall the definition of approximate symmetric gradient of a measurable function.

\begin{definition}[Approximate symmetric gradient]
A measurable function $u \colon \Om \to \R^n$ admits an \emph{approximate symmetric gradient} at $x \in \Om$ if there exists $\tilde{e}(u)(x) \in \mathbb{M}^{n \times n}_{sym}$ such that
\begin{equation}
\label{e:apsym1.1}
    \aplim_{z \to x} \frac{|(u(z)-u(x)) \cdot (z  - x) -\tilde{e}(u)(x)(z - x)\cdot (z - x)|}{| z - x |^2} = 0.
\end{equation}
\end{definition}

Notice that the approximate symmetric gradient, if it exists, is unique by formula~\eqref{e:apsym1.1}.

 Given a metric space~$(X, d_{X})$,~$\mathcal{M}_{b}(X)$ (resp.~$\mathcal{M}_{b}^{+}(X)$) is the space of bounded Radon measures on~$X$ (resp. bounded and positive Radon measures on~$X$). Given $(Y, d_{Y})$ another metric space, a Borel map~$f\colon X \to Y$, and a measure~$\mu \in \mathcal{M}_{b}(X)$, the push-forward measure of~$\mu$ through~$f$ is denoted by~$f_{\sharp}(\mu) \in \mathcal{M}_{b}(Y)$. The set of all Borel subset of~$X$ is indicated by~$\mathcal{B}(X)$. For a Lipschitz function $f\colon X \to Y$, we denote by ${\rm Lip} (f;X)$ the least Lipschitz constant of~$f$ on~$X$, defined as
\begin{displaymath}
{\rm Lip} (f, X) := \inf_{x, y \in X, x \neq y} \, \frac{d_{Y} (f(x), f(y))}{d_{X} (x, y)}\,.
\end{displaymath} 
We will drop the dependence on the set whenever it is clear from the context.

We recall the definition of $(n-1)$-rectifiable measure in~$\R^{n}$.

 \begin{definition}[Rectifiable measure]
Let $\Om \subseteq \R^{n}$ and let $\mu$ be a measure on $\Omega$. We say that~$\mu$ is $(n-1)$-rectifiable if there exist an $(n-1)$-rectifiable set $R$ and a real-valued measurable function~$\theta$ such that
\begin{equation*}
\mu = \theta \, \mathcal{H}^{n-1} \restr R\,.
\end{equation*}
 \end{definition}
 
\RRR For $U \in \mathcal{B}( \R^{n})$, \EEE for every $p \in [1, +\infty]$ the symbol $L^{p}(U; \R^{k})$ stands for the space of $p$-summable functions from~$U$ with values in~$\R^{k}$. The usual $L^{p}$-norm is denoted by~$\| \cdot\|_{L^{p}(U)}$. We will drop the set~$U$ in the notation of the norm when there is no chance of misunderstanding.

Given $({\rm M} ,g)$ an $n$-dimensional Riemannian manifold, we denote by $\text{exp}_p$ the exponential map centered at $p \in \RRR \rm M \EEE$.
  For $q \in {\rm M}$, we further denote by $d \,  {\rm exp}_{p}[q]$ the differential of the exponential map ${\rm exp}_{p}$ in the point~$q$, which maps ${\rm T}_{p}{\rm M}$ in ${\rm T}_{q}{\rm M}$. The symbol $\langle \cdot ,\cdot \rangle_{p}$ stands for the usual duality pairing between ${\rm T}^{*}_{p}{\rm M}$ and ${\rm T}_{p}{\rm M}$, while $(\cdot, \cdot)_{p}$ indicates the Riemannian scalar product in~${\rm T}_{p}{\rm M}$. The norm of a vector $v \in {\rm T}_{p}{\rm M}$ is denoted by $| v|_{p} = \sqrt{(v, v)_{p}}$. \RRR We will indicate by~${\rm inj}_{p}>0$ the injectivity radius of~$\text{exp}_p$, that is, the smallest~$r>0$ such that~$\text{exp}_p$ is injective on the set~$\{v \in {\rm T}_{p}{\rm M} : \ |v|_{p} <\rho \}$ for every $\rho \in (0, r)$. For every $p \in {\rm M}$ and every quadratic form $Q \in {\rm T}_{p}{\rm M} \otimes {\rm T}_{p}{\rm M}$ we define
  \begin{displaymath}
  \| Q\|_{{\rm T}_{p}{\rm M} \otimes {\rm T}_{p}{\rm M}} := \sup_{v \in {\rm T}_{p}{\rm M}} \frac{Q(v)}{ | v|_{p}^{2}}\,.
  \end{displaymath}
  \EEE

\subsection{Jump set of one-forms}
Let $({\rm M},g)$ be an $n$-dimensional Riemannian manifold. We define the jump set of a measurable one-form $\omega \in \mathcal{D}^1(\rm M)$ as follows.

\begin{definition}
\label{d:jump-omega}
Let $\omega \in \mathcal{D}^1(\rm M)$ measurable. We say that a point $p \in \rm M$ belongs to the jump set $J_\omega$ of $\omega$ if and only if there exists $\nu_\omega \in {\rm T}_p{\rm M}$ with $|\nu_\omega|_{p}=1$ and $\omega^\pm(p) \in {\rm T}^*_p{\rm M}$ with $\omega^+(p) \neq \omega^-(p)$ such that
\begin{equation}
\label{d:formjumpset}
    \aplim_{\substack{q \to p \\ q \in H^{\pm}(p)}} \, \langle\omega(q), d \, \text{exp}_p[q](v)\rangle_{q} = \langle\omega^{\pm}(p),v \rangle_{p} \ \ \text{ for every }v \in {\rm T}_p{\rm M}\, ,
\end{equation}
where $H^{\pm}(p) \cap \mathbb{B}_r(p) =\text{exp}_p(\{v \in {\rm T}_{p}{\rm M} : \ |v|_{p} <r \ \text{and} \  \pm (v ,  \nu_\omega(p))_{p} >0 \})$ for every $0 < r < \text{inj}_p$. 
\end{definition}

\subsection{Approximate symmetric gradient of one-forms}
Assume that $({\rm M} ,g)$ is an $n$-dimensional Riemannian manifold and consider a measurable one-form $\omega \in \mathcal{D}^1(\rm M)$. We want to define the \emph{approximate symmetric gradient} of $\omega$ at a point $p \in \rm M$ as a quadratic form acting on the tangent space ${\rm T}_p{\rm M}$, and satisfying a suitable first order expansion in a measure theoretical sense. Inspired by the euclidean notion of approximate symmetric gradient for measurable vector fields \cite{dal}, we give the following definition.
\begin{definition}[Approximate symmetric gradient of one-forms]
    Let $\omega \in \mathcal{D}^1(\rm M)$ be measurable. Then we say that $\omega$ admits an approximate symmetric gradient at $p \in \rm M$ if there exists a quadratic form $e(\omega)(p) \in {\rm T}_p{\rm M} \otimes {\rm T}_p{\rm M}$ such that
    \begin{equation}
        \label{e:apsymmanifold1}
        \aplim_{q \to p}\, \frac{|\langle\omega(q), d\,\text{exp}_p[q](v_q)\rangle_{q} - \langle \omega(p), v_q\rangle_{p} - e(\omega)(p)( v_q)|}{\text{d}_{\text{M}}(q,p)^2} =0\,,
    \end{equation}
    where $v_q \in {\rm T}_p{\rm M}$ is the unique vector satisfying $\text{exp}_p(v_q)=q$, namely, $v_q= \text{exp}^{-1}_p(q)$.
\end{definition}

\begin{remark}
 \RRR   The approximate limit in \eqref{e:apsymmanifold1} is well defined since $\exp_p^{-1}$ is a well defined map for every $q \in \mathbb{B}_r(p)$ for every $0 < r < \text{inj}_p$. \EEE
\end{remark}

\begin{remark}[Uniqueness of approximate symmetric gradient]
    The approximate symmetric gradient $e(\omega)(p)$ of a measurable one-form $\omega$ at $p$ is unique whenever it exists. This can be easily checked by a contradiction argument. Indeed, assuming the existence of two different approximate symmetric gradients $e^1(\omega)(p)$ and $e^2(\omega)(p)$ we obtain the validity of
    \begin{equation}
    \label{e:apsymmanifold2}
    \lim_{q \to p} \frac{|e^1(\omega)(p)(v_q) - e^2(\omega)(p)( v_q)|}{\text{d}_{\text{M}}(q,p)^2} =0.
    \end{equation}
    The approximate limit in \eqref{e:apsymmanifold2} can be rewritten as
    \[
    \lim_{v \to 0} \frac{|e^1(\omega)(p)(v) - e^2(\omega)(p)(v)|}{\text{d}_{\text{M}}(\exp_p(v),p)^2} =\lim_{v \to 0} \frac{|e^1(\omega)(p)(v) - e^2(\omega)(p)(v)|}{\RRR | v|_{p}^{2} \EEE }=0,
    \]
    which clearly implies $e^1(\omega)(p)= e^2(\omega)(p)$ as element of ${\rm T}_p{\rm M} \otimes {\rm T}_p{\rm M}$. 
\end{remark}

\subsection{A rectifiability criterion for a class of integralgeometric measures}


%

The notion of \emph{transversal family of maps} will play a fundamental role along this section. The following definition is an adaptation of \cite[Definition 2.4]{hov} \RRR (see also~\cite[Definition~2.3]{AT_22-preprint}). \EEE 

\begin{definition}[Transversality]
\label{d:transversal}
Let $\Omega \subseteq \mathbb{R}^n$ be open and let $S_i:=\{\xi \in \mathbb{S}^{n-1}  :  |\xi\cdot e_i| \geq 1/\sqrt{n}  \}$ for $i=1,\dotsc,n$. We say that a family of Lipschitz maps $P_\xi \colon \Omega \to \xi^\bot$ for $\xi \in \mathbb{S}^{n-1}$ is a transversal family of maps on~$\Omega$ if for every $i=1,\dotsc,n$ the maps
\begin{align*}
P^i_\xi(x) & := \pi_{e_i}\circ P_\xi(x) \qquad \text{for } \xi \in S_i, \  x \in  \Omega\,,
\\
 T^{i}_{xx'}(\xi) & := \frac{P^i_\xi(x) - P^i_\xi(x')}{|x-x'|} \qquad \text{for } \xi \in S_{i}, \ x, x' \in \Om \text{ with $x \neq x'$}
\end{align*}
satisfy the following properties:
\begin{enumerate}[label=(H.\arabic*),ref=H.\arabic*]
    \item For every $x \in \Omega$ the map $\xi \mapsto P^i_\xi(x)$ belongs to $C^2(S_i;\mathbb{R}^{n-1})$ and
    \begin{equation*}
    \label{e:h1}
    \sup_{(\xi,x) \in S_i \times \Omega} |D^j_\xi P^i_\xi(x)| < \infty, \ \ \text{for }j=1,2\, ;
    \end{equation*}
    \item \label{hp:H2} There exists a constant $C' >0$ such that for every $\xi \in S_i$ and $x,x' \in \Omega$ with $x \neq x'$ 
    \begin{equation*}
    \label{e:h2}
        |T^{i}_{xx'}(\xi)| \leq C' \ \ \ \text{ implies } \ \ \
        |\text{J}_\xi T^{i}_{xx'}(\xi)| \geq C';
    \end{equation*}
   \item \label{hp:H3} There exists a constant $C'' >0$ such that 
   \begin{equation*}
   \label{e:h3}
       | D^j_\xi T^{i}_{xx'}(\xi) | \leq C'',\ \ \text{for }j=1,2\,
   \end{equation*}
   for $\xi \in S_i$ and $x,x' \in \Omega$ with $x \neq x'$.
\end{enumerate}
\end{definition}

\section{The space $GBD$ on a Riemannian manifold}
\label{s:GBD-M}

In this section we define the space of generalized functions of bounded deformation on a Riemannian manifold~$({\rm M}, g)$ of dimension $n$. To this aim, we first introduce the notions of parametrized maps and curvilinear projections on~$\rm M$, following the ideas of~\cite{AT_22-preprint} in the Euclidean setting (see also Section~\ref{s:curvilinear}).


\begin{definition}[Parametrized maps on~$\rm M$]
Let $V \subseteq \rm M$ open and~$\xi \in \mathbb{S}^{n-1}$. We say that a map $P \colon V \to \xi^{\bot}$ is a parametrized map on~$V$ if there exist $\rho, \tau>0$ and a smooth Lipschitz map~$\varphi \colon \{y + t\xi: (y, t) \in  [\xi^{\bot} \cap {\rm B}_{\rho} (0)] \times (-\tau, \tau)\} \to \rm M$ such that the following conditions hold:
\begin{enumerate}[label=(\arabic*), ref=(\arabic*)]
    \item $V \subseteq \text{Im}(\varphi)$;
    \item $\varphi^{-1}  \restr V$ is a bi-Lipschitz diffeomorphism with its image;
    \item $ P(\varphi ( y +  t\xi )) = y$ for every $(y, t) \in [\xi^{\bot} \cap \mathrm{B}_{\rho}(0)] \times  (-\tau, \tau)$ such that $y + t\xi \in  \varphi^{-1} (V)$.
\end{enumerate}
\end{definition}

\begin{remark}
 Conditions (2) and (3) of parametrized map imply
\begin{enumerate}[label=(\arabic*), ref=(\arabic*), ]
\setcounter{enumi}{3}
    \item
    $ \varphi ( P (p) +  (\varphi^{-1}(p) \cdot \xi) \xi \big) = p$ for every $p \in V$.
\end{enumerate}
 We will more compactly denote by~$t^\xi_p$ the real number $ \varphi^{-1}(p) \cdot \xi$ for every $p \in V$ and every $\xi \in \mathbb{S}^{n-1}$. Whenever~$\xi$ is fixed and there is no chance of misunderstanding, we drop the index~$\xi$ and write~$t_p$ instead of~$t^\xi_p$. 
\end{remark}

\RRR
\begin{remark}
\label{r:M1}
Let $({\rm M}, g)$ be a Riemannian manifold of dimension~$n$, let $V\subseteq \rm M$ open, and let $P \colon V \to \xi^{\bot}$ be a parametrized map on~$V$ with parametrization~$\varphi \colon \{y + t\xi: (y, t) \in  [\xi^{\bot} \cap {\rm B}_{\rho} (0)] \times (-\tau, \tau)\} \to \rm M$. Then, for every chart~$(U, \psi)$ on~$\rm M$ with~$U \subseteq V$ we have that $\overline{P} := P \circ \psi^{-1}\colon \psi(U) \to \xi^{\bot}$ is a parametrized map on~$\psi(U) \subseteq \R^{n}$ with parametrization $$\overline{\varphi} := \psi\circ \varphi \colon  \{y + t\xi: (y, t) \in  [\xi^{\bot} \cap {\rm B}_{\rho} (0)] \times (-\tau, \tau)\} \to \R^{n}\,.$$
\end{remark}
\EEE

\begin{definition}[Velocity field on~$\rm M$]
Let $V \subseteq \rm M$ open,~$\xi \in \mathbb{S}^{n-1}$, let $P \colon V \to \xi^{\bot}$ be a parametrized map on~$V$ with parametrizationi~$\varphi \colon \{y + t\xi: (y, t) \in  [\xi^{\bot} \cap {\rm B}_{\rho} (0)] \times (-\tau, \tau)\} \to \rm M$. For $(y, t) \in [\xi^{\bot} \cap {\rm B}_{\rho} (0)] \times (-\tau, \tau)$, we denote by $\dot{\varphi} (y + t\xi) \in {\rm T}_{\varphi(y + t\xi)} \rm M $ the velocity field of the curve $t \mapsto \varphi(y + t\xi)$.
\end{definition}

\begin{definition}[Curvilinear projections on~$\rm M$]
\label{d:CP_M}
Let $V \subseteq \rm M$ open and~$\xi \in \mathbb{S}^{n-1}$. We say that a map $P \colon V \to \xi^{\bot}$ is a curvilinear projection on~$V$ if the following conditions hold:
\begin{enumerate}[label=(\arabic*), ref=(\arabic*)]
\item $P$ is parametrized on~$V$ with parametrization $\varphi \colon \{y + t\xi: (y, t) \in  [\xi^{\bot} \cap {\rm B}_{\rho} (0)] \times (-\tau, \tau)\} \to \rm M$;
\item the parametrization $\varphi$ is such that for every $y \in  [\xi^{\bot} \cap {\rm B}_{\rho} (0)]$ the curve $t \mapsto \varphi(y + t\xi)$ is a geodesic on~$\rm M$.
\end{enumerate}
\end{definition}

\RRR
\begin{remark}
For $P\colon V \to \xi^{\bot}$ and~$\varphi \colon \{y + t\xi: (y, t) \in  [\xi^{\bot} \cap {\rm B}_{\rho} (0)] \times (-\tau, \tau)\} \to \rm M$ as in Definition~\ref{d:CP_M}, we define
\begin{displaymath}
\| \dot\varphi\|_{L^{\infty} , \rm M} := \sup_{(y, t) \in  [\xi^{\bot} \cap {\rm B}_{\rho} (0)] \times (-\tau, \tau)} \, | \dot{\varphi} (y + t\xi)|_{\varphi(y + t\xi)}\,.
\end{displaymath}
Notice that in the notation of $\| \dot\varphi\|_{L^{\infty} , \rm M}$ we will never drop the index~$\rm M$.
\end{remark}
\EEE

Given $V \subseteq \rm M$ open, a parametrized map $P\colon V \to \xi^{\bot}$, and \RRR ~$\omega \in \mathcal{D}^{1} (V)$ measurable, \EEE we define the slices of~$u$ w.r.t.~$P$.

\begin{definition}[Slices on~$\rm M$]
Let $({\rm M} , g)$ be a Riemannian manifold of dimension~$n$, let~$V \subseteq \rm M$ open,~$\xi \in \mathbb{S}^{n-1}$, and let~$P \colon V \to \xi^{\bot}$ be a curvilinear projection on~$V$ with parametrization~$\varphi \colon \{ y + t\xi: (y, t) \in [\xi^{\bot} \cap {\rm B}_{\rho} (0)] \times (-\tau, \tau)\}$. For every $B \in \mathcal{B} (V)$ we define
\begin{displaymath}
B^{\xi}_{y} := \{ t \in (-\tau, \tau) : \, \varphi(y + t\xi) \in B\} \qquad \text{for $y \in [\xi^{\bot} \cap {\rm B}_{\rho} (0)]$}.
\end{displaymath}
For every $\omega \in \mathcal{D}^{1}(V)$ we define 
\begin{displaymath}
\hat{\omega}^{\xi}_{y} (t) := \left\langle \omega (\varphi (y + t\xi)) , \dot\varphi(y + t\xi)\right\rangle_{\varphi (y + t\xi)} \qquad \text{for $t \in V^{\xi}_{y}$.}
\end{displaymath}
\RRR In addition for every $u \in \Gamma(V)$ we define  
\begin{displaymath}
\hat{u}^{\xi}_{y} (t) :=  (u (\varphi (y + t\xi)) ,  \dot\varphi(y + t\xi)) _{\varphi(y + t\xi)}  \qquad \text{for $t \in V^{\xi}_{y}$.}
\end{displaymath}\EEE
\end{definition}

\begin{definition}
\label{d:GBD_M}
Let $({\rm M}, g)$ be a Riemannian manifold of dimension $n$. We say that $u \in \mathcal{D}^{1}( {\rm M})$ has {\em generalized bounded deformation} on~$\rm M$, and we write $\omega \in GBD({\rm M})$, if there exists $\lambda \in \mathcal{M}^{+}_{b} ({\rm M})$ such that for every $V \subseteq \rm M$ open, every $\xi \in \mathbb{S}^{n-1}$, and every curvilinear projection $P\colon V \to \xi^{\bot}$ on~$V$, the following facts hold:
\begin{enumerate}
\item for $\mathcal{H}^{n-1}$-a.e.~$y \in \xi^{\bot}$ the map $\hat{\omega}^{\xi}_{y}$ belongs to $BV_{loc}( V^{\xi}_{y})$;

\item for every $B \in \mathcal{B} ({\rm M})$ we have that
\begin{displaymath}
\int_{\xi^{\bot}}  \big(\big| | {\rm D} \hat{\omega}^{\xi}_{y} | (  B^{\xi}_{y} \setminus J^{1}_{\hat{\omega}^{\xi}_{y}})  + \mathcal{H}^{0} (  B^{\xi}_{y} \cap J^{1}_{\hat{\omega}^{\xi}_{y}} ) \big) \, \di \mathcal{H}^{n-1} (y) \leq  \RRR \|  \dot{\varphi} \|^{2}_{L^{\infty}, \rm M} \EEE\,  {\rm Lip}(P ; V)^{n-1} \lambda(B)\,.
\end{displaymath}
\end{enumerate}
\end{definition}

\begin{remark}
Since in Definition~\ref{d:GBD_M} the map $t\mapsto \varphi(y + t\xi)$ is a geodesic on~$\rm M$. Thus, for $y \in [\xi^{\bot} \cap {\rm B}_{\rho} (0)]$ the  speed modulus $| \dot{\varphi} (y + t\xi) |_{\varphi(y + t\xi)}$ is constant as a function of $t$. Hence, the $L^{\infty}$-norm in item~$(2)$ of the definition is computed as a supremum w.r.t.~$y$. 
\end{remark}

In order to study the structure of the space $GBD(\rm M)$, we show in the next section that, locally on charts of~$\rm M$, it is equivalent to the space $GBD_{F} (\Om)$ for a suitable open subset~$\Om$ of~$\R^{n}$ and field $F \colon \Om \times \R^{n} \to \R^{n}$.

\section{The space $GBD_{F}(\Om)$}
\label{s:definition}

We start by recalling the definition of $GBD_{F} (\Om)$ for an open set $\Om \subseteq \R^{n}$ and a  field~$F \in C^{\infty} (\R^{n} \times \R^{n}  ;\R^{n})$. First, we list the assumptions on~$F$ (see also~\cite[Sections~3.1 and~6.1]{AT_22-preprint}).

 \subsection{Assumptions on the field $F$}
\RRR We will always assume \EEE that $F \in C^{\infty} (\R^{n} \times \R^{n}; \R^{n})$ fulfills 
 \begin{enumerate}[label=(F.1),ref=F.1]
 \item \label{hp:F}  $F$ is a quadratic form in the second variable, that is, for every $x \in \R^{n}$ and every $v_1,v_2 \in \mathbb{R}^n$
\begin{equation}
    \label{e:quadratic}
    F(x,v_1 + v_2) + F(x,v_1 - v_2) = 2 F(x,v_1) +2 F(x,v_2)\,.
\end{equation}
\end{enumerate}

In Section~\ref{s:jump-u} we will require an additional property on~$F$, namely the so-called {\em Rigid Interpolation} property (cf.~\cite[Section~6.1]{AT_22-preprint})), which needs some further notation. Let $\{e_1,\dotsc,e_n\}$ be the canonical basis of $\mathbb{R}^n$. Thanks to~\eqref{e:quadratic} we associate to~$F$ a map $F^q \colon \mathbb{R}^n \to \text{Lin}(\mathbb{R}^n \otimes \mathbb{R}^n \otimes \mathbb{R}^n;\mathbb{R})$ as follows:
 \begin{equation*}
     \label{e:rip3}
     F^q(x)(v_1 \otimes v_2 \otimes v_3)  := \frac{v_3}{2} \cdot (F(x,v_1+v_2)  -F(x,v_1)  -F(x,v_2) ) \qquad  v_1,v_2,v_3 \in \mathbb{R}^n.
 \end{equation*}
 It is worth noting that, under our hypothesis~\eqref{hp:F}, for every $v_3 \in \mathbb{R}^n$ the map $(v_1,v_2) \mapsto F^q(x)(v_1 \otimes v_2 \otimes v_3)$ is symmetric and hence can be represented as an element of $\mathbb{M}^{n \times n}_{sym}$. For this reason we can write
 \begin{equation*}
     \label{e:rip6}
     F^q(x)(v_1 \otimes v_2 \otimes v_3)= (v_3 \cdot F^q(x))v_1 \cdot v_2  \qquad \text{for $v_1,v_2,v_3 \in \mathbb{R}^n$,}
     \end{equation*}
     for a suitable $(v_3 \cdot F^q(x))\in \mathbb{M}^{n \times n}_{sym}$ depending on~$v_3$. Given $r>0$ and a point $x \in \mathbb{R}^n$ we define $F_{r, x} \colon \mathbb{R}^n \times \mathbb{R}^n \to \mathbb{R}^n$ as $F_{r, x}(z,v):= r F ( x + r z , v )$ and analogously $F^q_{r, x} \colon \mathbb{R}^n \to \text{Lin}(\mathbb{R}^n \otimes \mathbb{R}^n \otimes \mathbb{R}^n; \mathbb{R})$ as $F^q_{r, x}(z):= rF^q(x+rz)$. 

 For $z \in {\rm B}_1(0)$, we set $\mathcal{S}_{0,z}:= \{z+e_0, \dotsc,z+e_n  \}$, where~$e_0:=0$. For $r>0$ and $0 \leq i < j \leq n$  we define $t \mapsto \ell_{z, r, ij}(t)$ as the curve $\gamma(\cdot)$ (whenever it is well defined) satisfying 
\begin{equation*}
\label{e:poincare15000}
    \begin{cases}
    \ddot{\gamma}(t) = F_{r, x}(\gamma(t),\dot{\gamma}(t)), \ t \in [0,t_{ij}], \ \text{for some }t_{ij}>0 & \\
    \gamma(0)=z+e_i, \ \gamma(t_{ij})=z+e_j &\\
     |\dot{\gamma}(0)|= 1.  &
    \end{cases}
\end{equation*}

\begin{remark}
Notice that, as shown in~\cite[Lemma~3.13 and Remark~6.1]{AT_22-preprint}, for $r>0$ sufficiently small the curve $t \mapsto \ell_{z, r, ij}(t)$ is well-defined for every $z \in {\rm B}_{1}(0)$ and every $0 \leq i < j \leq n$.
\end{remark}

We denote by~$\mathcal{S}_{r,1,z}$ the 1-dimensional geodesic skeleton of~$\mathcal{S}_{0,z}$, i.e.,  
\[
\mathcal{S}_{r,1,z} := \{ h \in \mathbb{R}^n : \, h = \ell_{z, r, ij}(t) \ \text{for some }t \in [ 0,t_{ij}] \text{ and } \ i \neq j \}.
\]
For $0 \leq i <j \leq n$ we further set 
\[
\xi_{r,ij}(z):= \dot{\ell}_{z, r,ij}(0) \qquad \text{ and } \qquad \xi_{r,ji}(z):= \dot{\ell}_{z, r,ij}(t_{ij}) \,.
\]
 We consider the semi-norm $E_{r, z} \colon \mathbb{R}^{n+1} \times \mathbb{R}^n \to [0, + \infty) $ defined as
\[
E_{r, z} (w) := \sum_{0 \leq i < j \leq n} |w^j \cdot \xi_{r,ji}(z) - w^i \cdot \xi_{r,ij}(z)|  \qquad \text{for $w \in \mathbb{R}^{(n+1) \times n}$,}
\]
where $w^{i}$ denotes the $i$-th column of the matrix~$w$.  Eventually, we denote by $\mathcal{S}_{n,z}$ the convex hull of $\mathcal{S}_{0,z}$. Observing that every~$z$ belonging to $\{ z \in \mathrm{B}_1(0) : \, z \cdot e_i < 0, \ i=1,\dotsc,n  \}$  satisfies $\mathcal{S}_{n,z}\subset {\rm B}_1(0)$ and that
\[
\mathcal{L}^n(\{ z \in \mathrm{B}_1(0) : \, z \cdot e_i < 0, \ i=1,\dotsc,n  \})=\frac{\omega_n}{2^n} \,,
\]
we infer the existence of a dimensional constant $0<\rho(n) \leq 1$ such that $2^{n+1}\mathcal{L}^n(Q(n)) \geq \omega_n$ whenever
\begin{equation*}
\label{e:rip1}
Q(n):= \{z \in \mathrm{B}_1(0) :\, \mathrm{B}_{\rho(n)}(0) \subset \mathring{\mathcal{S}}_{n,z} \subset \mathcal{S}_{n,z} \subset \mathrm{B}_1(0) \} \,.
\end{equation*}

With the above notation at hand, the \RRR Rigid \EEE Interpolation property reads as follows:
 \begin{enumerate}[label=(RI),ref=RI]
   \item \label{hp:F2}  Given $x \in \mathbb{R}^n$ there exists a radius $r_x>0$ such that for every $z \in Q(n)$, every $w \in \mathbb{R}^{(n+1)  \times n}$, and every $0 < r \leq r_x$, we find a smooth map $a_r \colon \mathrm{B}_1(0) \to \mathbb{R}^n$ such that 
 \begin{align}
    \label{e:rip4}
    &  a_r(h)=w^i  \qquad \text{ for every $h \in \mathcal{S}_{0,z}$,} \\
    \label{e:rip5}
    &\|\tilde{e}(a_r) - a_r \cdot F^q_{r , x} \|_{L^{\infty}(\mathcal{S}_{n,z}; \mathbb{M}^{n \times n}_{sym})} \leq c(n) E_{r, z} (w)\,,
\end{align}
where $c(n)>0$ is a dimensional constant and where $\tilde{e}(a_r)$ denotes the symmetric gradient of $a_r$. 
\end{enumerate}

\subsection{Curvilinear projections on~$\Om$.}
\label{sub:curvilinear}

In this section we recall the definitions of (families of) curvilinear projections on~$\Om$ w.r.t.~a field $F \in C^{\infty} (\R^{n} \times \R^{n}; \R^{n})$ satisfying~\eqref{hp:F}. (cf.~Definitions~\ref{d:param} and~\ref{d:CP} and \cite[Section~3.1]{AT_22-preprint}). We refer to Section~\ref{s:curvilinear} for some technical properties of a specific family of curvilinear projections, which will be used in Section~\ref{s:approximate-sym-gradient}.

\begin{definition}[Velocity field]
\label{d:velocity-field}
Let~$\Om$ be a bounded open subset of~$\R^{n}$, $\xi \in \mathbb{S}^{n-1}$, and let $P \colon \Om \to \xi^{\bot}$ be a map parametrized by~$\varphi$ on~$\Om$. For every $x \in \Omega$ we define the {\rm velocity field} 
\begin{equation*}
\xi_{\varphi}(x):= \dot{\varphi}(P (x) +t_x \xi).
\end{equation*}
\end{definition}


\begin{definition}[Curvilinear projections on~$\Om$ w.r.t.~$F$ ({\cite[Definition~3.5]{AT_22-preprint}})]
\label{d:CP-maps}
Let $\Omega$ be an open subset of~$\mathbb{R}^n$ and $\xi \in \mathbb{S}^{n-1}$. We say that a smooth Lipschitz map $P \colon \Omega \to \xi^{\bot}$  is a \emph{curvilinear projection} (with respect to $F$) on~$\Om$ if the following holds:
\begin{enumerate}
\item $P$ is parametrized on~$\Om$ by $\varphi \colon \{ y + t\xi: (y, t) \in [\xi^{\bot} \cap \mathrm{B}_{\rho}(0)] \times  (-\tau, \tau)\}  \to \R^{n}$;
\item\label{CP:item-2} For every $(y, t) \in [\xi^{\bot} \cap {\rm B}_\rho(0)] \times (-\tau,\tau)$ we have
\begin{equation*}
    \ddot{\varphi} (y + t\xi) = F(\varphi(y + t\xi) ,\dot{\varphi}(y + t\xi) )\,.
    \end{equation*}
\end{enumerate}
\end{definition}

\begin{remark}
\label{r:M2}
Let $({\rm M}, g)$ be a Riemannian manifold of dimension~$n$, let~$V\subseteq \rm M$ open, and let $P \colon V \to \xi^{\bot}$ be a curvilinear projection on~$V$ with parametrization $\varphi \colon \{y + t\xi: (y, t) \in  [\xi^{\bot} \cap {\rm B}_{\rho} (0)] \times (-\tau, \tau)\} \to \rm M$. Similar to Remark~\ref{r:M1}, we notice for every chart~$(U, \psi)$ on~$\rm M$ with~$U \subseteq V$ we have that, setting $\overline{P} := P \circ \psi^{-1}$ and $\overline{\varphi}:= \psi \circ \varphi$, the map~$\overline{P}$ is a curvilinear projection on~$\psi(U)$ with respect to the field~$F$
\begin{equation}
\label{e:chris}
F(x, v)  := - \bigg( \sum_{i,j=1}^n\Gamma^1_{ij}(x)v_iv_j,\dotsc,\sum_{i,j=1}^n\Gamma^n_{ij}(x)v_iv_j \bigg)  \qquad (x,v) \in \psi(U) \times \mathbb{R}^n\,,
\end{equation}
where $\Gamma^{\ell}_{ij}$ denote the Christoffel symbols on~$\rm M$ induced by the chart~$(U,\psi)$. Indeed, for every $y \in [\xi^{\bot} \cap {\rm B}_{\rho}(0)]$ the curve $t \mapsto \overline{\varphi} (y + t\xi)$ solves the ODE $\ddot{\overline{\varphi}} = F(\overline{\varphi}, \dot{\overline{\varphi}})$, since $t\mapsto \varphi(y+t\xi)$ is a geodesic on~$\rm M$.

We notice that the viceversa is also true: for every open subset~$\Om$ of~$\psi(U)$ and every curvilinear projection~$\overline{P}\colon \Om \to \xi^{\bot}$ on~$\Om$ with respect to the field~$F$ and with parametrization~$\overline{\varphi}$, we have that $P:= \overline{P} \circ \psi \colon \psi^{-1} (\Om) \to \xi^{\bot}$ is a curvilinear projection on~$\psi^{-1} (\Om)\subseteq \rm M$ with parametrization~$\varphi:= \psi^{-1} \circ \overline{\varphi}$.
\end{remark}



\begin{definition}[Parametrized family on~$\Om$ ({\cite[Definition~3.6]{AT_22-preprint}})]
\label{d:param}
Let $\Omega$ be an open subset of~$\mathbb{R}^n$. We say that a family $P_\xi \colon \Omega \to \xi^\bot$ for $\xi \in \mathbb{S}^{n-1}$ is {\em parametrized} on~$\Omega$ if and only if there exist $\rho,\tau>0$, an open subset~$A$ of~$\R^{n} \times \mathbb{S}^{n-1}$, and a smooth Lipschitz map $\varphi \colon  A \to \R^{n}$  such that
\begin{enumerate}[label=(\arabic*), ref=(\arabic*)]
    \item for every $\xi \in \mathbb{S}^{n-1}$ we have $A_{\xi} = \{ y + t\xi: (y, t) \in   [\xi^\bot \cap {\rm B}_\rho(0)] \times (-\tau,\tau) \} $;
    \item for every $\xi \in \mathbb{S}^{n-1}$, $P_{\xi}$ is parametrized on~$\Om$ by the map $\varphi_{\xi} := \varphi (\cdot, \xi) \colon A_{\xi} \to \R^{n}$. 
\end{enumerate}
\end{definition}



We also give the definition of \RRR family \EEE of curvilinear projections.

\begin{definition}[Family of curvilinear projections on~$\Om$ ({\cite[Definition~3.7]{AT_22-preprint}})]
\label{d:CP}
Let $\Omega$ be an open subset of~$\mathbb{R}^n$. We say that a family of maps $P_\xi \colon \Omega \to \xi^\bot$ for $\xi \in \mathbb{S}^{n-1}$ is a family of \emph{curvilinear projections} on~$\Om$ if the following conditions hold:
\begin{enumerate}[label=(\arabic*), ref=(\arabic*)]
\item  the family $(P_{\xi})_{\xi \in \mathbb{S}^{n-1}}$ is parametrized by $\varphi \colon  A \to \R^{n}$; 
\item  for every $\xi \in \mathbb{S}^{n-1}$, $P_\xi$ is a curvilinear projection on~$\Om$ with parametrization $\varphi_{\xi} = \varphi(\cdot, \xi)$; 
\item $(P_\xi)_{\xi \in \mathbb{S}^{n-1}}$ is a transversal family of maps on~$\Om$;
\item\label{d:CP-4} for every $x \in \Om$, the map $\xi \mapsto \xi_{\varphi}(x)/|\xi_{\varphi}(x)|$ is a diffeomorphism from $\mathbb{S}^{n-1}$ onto itself.
\end{enumerate}
\end{definition}

We conclude by recalling the definition of slices of a measurable function~$u \colon \Om \to \R^{n}$ w.r.t.~a curvilinear projection~$P \colon \Om \to \xi^{\bot}$ on~$\Om$.

\begin{definition}[Slices]
\label{d:slices}
Let $\Om$ be an open subset of~$\R^{n}$, $\xi \in \mathbb{S}^{n-1}$ and let $P \colon \Om \to \xi^{\bot}$ be a curvilinear projection on~$\Om$ parametrized by $\varphi \colon \{ y + t\xi: (y, t) \in [\xi^{\bot} \cap {\rm B}_{\rho} (0)] \times (-\tau, \tau)\} \to \R^{n}$. For every measurable function $u \colon \Om \to \R^{m}$, we define $u_\xi \colon \Omega \to \mathbb{R}$ by
\begin{equation*}
u_\xi(x) := \hat{u}^\xi_{P (x)}(t_x)=\hat{u}^\xi_{P (x)}(\varphi^{-1}(x) \cdot \xi),
\end{equation*}
and we notice the following identity
\begin{equation}
    \label{e:sliceide}
    u_\xi(\varphi ( y + t \xi ) ) = \hat{u}^\xi_y(t) \qquad  \text{for }\xi \in \mathbb{S}^{n-1} \text{ and } (y,t) \in [\xi^\bot \cap \mathrm{B}_\rho(0)] \times (-\tau,\tau).
\end{equation}
For a measurable function $v \colon \Om \to \R^{m}$ we also set $v^{\xi}_{y} (t) := v(\varphi (y + t\xi))$ for $t \in \Om^{\xi}_{y}$. 
Eventually, in order to simplify the notation, if $\varphi$ is the identity we use the notation
\begin{align*}
& \widetilde{B}^{\xi}_{y} := \{ t \in \R : \, y + t\xi \in B\}\,,\\
& \widetilde{u}^{\xi}_{y}(t):= u(y + t\xi) \cdot \xi \qquad \text{for $t \in \widetilde{\Om}^{\xi}_{y}\,.$}
\end{align*}
\end{definition}

\subsection{Definition of~$GBD_{F} (\Om)$}

We recall here the definition of the space~$GBD_{F}(\Om)$ introduced in~\cite[Definition~6.4]{AT_22-preprint}.

\begin{definition}[The space $GBD_{F}(\Om)$]
\label{d:GBD}
Let $\Om$ be an open subset of~$\R^{n}$. We say that a measurable function~$u \colon \Om \to \R^{n}$ belongs to $ GBD_{F}(\Om)$ if there exists $\lambda \in \mathcal{M}_{b}^{+}(\Om)$ such that for every $U \subseteq \Om$, every $\xi \in \mathbb{S}^{n-1}$, and every curvilinear projection $P \colon U \to \xi^{\bot}$ on~$U$ the following facts hold: 
\begin{enumerate}
\item\label{e:slice-1} $\hat{u}^{\xi}_{y} \in BV_{loc} (U^{\xi}_{y})$  for $\HH^{n-1}$-a.e.~$y \in \xi^{\bot}$;
\vspace{1mm}
\item\label{e:slice-2} for every Borel subset $B \in \mathcal{B}(U)$
\begin{align*}
\int_{\xi^{\bot}} \Big( |{\rm D} \hat{u}^{\xi}_{y} | (B^{\xi}_{y} \setminus J^{1}_{\hat{u}^{\xi}_{y}}) & + \HH^{0} (B^{\xi}_{y} \cap J^{1}_{\hat{u}^{\xi}_{y} } ) \Big)\, \di \HH^{n-1}(y) 
\leq \|\dot{\varphi}_\xi\|^2_{L^\infty} \, {\rm Lip}(P_{\xi};U)^{n-1} \lambda(B)\,. 
\end{align*}
\end{enumerate}
\end{definition}

\begin{remark}
\label{r:nontrivial}
Notice that the integral in~\eqref{e:slice-2} is justified by~\cite[Proposition~6.3]{AT_22-preprint}.
%
\end{remark}


%

We recall here a general fact about measurable functions that was proven in~\cite[Theorem~3.5]{dal}. To this end, we introduce the following notation.

\begin{definition}
We denote by~$\mathcal{T}$ the family of all functions $\tau \in C^{1}(\R)$ such that $-\frac{1}{2} \leq \tau \leq \frac{1}{2}$ and $0 \leq \tau' \leq 1$.
\end{definition}

As usual, for a function $v \in L^{1}_{loc} (U)$ and for $\xi \in \mathbb{S}^{n-1}$ we denote by~$D_{\xi} v$ the distributional derivative of~$v$ in direction~$\xi$.

\begin{theorem}
\label{t:DM-3.5}
Let $v \colon U \to \R$ be measurable, $\xi \in \mathbb{S}^{n-1}$, and $\theta \in \mathcal{M}^{+}_{b}(U)$. Then, the following facts are equivalent:
\begin{itemize}
\item[$(i)$] for every $\tau \in \mathcal{T}$, $D_{\xi}(\tau(v))$ belongs to~$\mathcal{M}_{b}(U)$ and
\begin{displaymath}
| D_{\xi} (\tau(v))| (B) \leq \theta(B) \qquad \text{for every $B \in \mathcal{B}(U)$;}
\end{displaymath}

\item[$(ii)$] for $\mathcal{H}^{n-1}$-a.e.~$y \in \xi^{\bot}$ the function $\widetilde{v}^{\xi}_{y}$ belongs to $BV_{loc}(\widetilde{U}^{\xi}_{y})$ and
\begin{displaymath}
\int_{\xi^{\bot}} \big(| D\widetilde{v}^{\xi}_{y}| (\widetilde{B}^{\xi}_{y} \setminus J^{1}_{\widetilde{v}^{\xi}_{y}}) + \mathcal{H}^{0} (\widetilde{B}^{\xi}_{y} \cap J^{1}_{\widetilde{v}^{\xi}_{y}}) \big) \di \mathcal{H}^{n-1}(y) \leq \theta(B)
\end{displaymath}
for every $B \in \mathcal{B}(U)$.
\end{itemize}
\end{theorem}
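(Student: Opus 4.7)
The plan is to establish the two implications separately, relying on the classical slicing theory of $BV$ functions together with a truncation argument through the family $\mathcal{T}$. The implication $(ii) \Rightarrow (i)$ is the easier of the two. Its starting point is the elementary one-dimensional pointwise estimate
\begin{equation*}
|D(\tau \circ w)|(E) \leq |Dw|(E \setminus J^1_w) + \HH^0(E \cap J^1_w) \qquad \text{for every Borel } E \subseteq I,
\end{equation*}
valid for any $w \in BV_{loc}(I)$ and any $\tau \in \mathcal{T}$. This is because $0 \leq \tau' \leq 1$ controls the diffuse (absolutely continuous plus Cantor) part of $Dw$, while at a jump of $w$ one has $|\tau(w^+) - \tau(w^-)| \leq \min(|w^+ - w^-|, 1)$, which is bounded by $|w^+ - w^-|$ for jumps outside $J^1_w$ and by $1$ on $J^1_w$ thanks to $|\tau| \leq 1/2$. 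Specializing this bound to $w = \widetilde{v}^\xi_y$ for $\HH^{n-1}$-a.e.~$y$ and integrating in $y$, assumption $(ii)$ yields
\begin{equation*}
\int_{\xi^\bot} |D(\tau(\widetilde{v}^\xi_y))|(\widetilde{B}^\xi_y)\, d\HH^{n-1}(y) \leq \theta(B) \qquad \text{for every } B \in \mathcal{B}(U).
\end{equation*}
The standard converse direction of the one-dimensional $BV$ slicing in $\R^n$ (see, e.g., \cite[Section~3.11]{afp}) then upgrades this integral bound to $D_\xi(\tau(v)) \in \mathcal{M}_b(U)$ with $|D_\xi(\tau(v))|(B) \leq \theta(B)$, namely $(i)$.

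For the harder direction $(i) \Rightarrow (ii)$, I would fix a countable family $\{\tau_k\}_{k \in \mathbb{N}} \subseteq \mathcal{T}$ which is dense in a suitable topology, and apply the direct $BV$ slicing theorem to each function $\tau_k(v)$. Outside a single $\HH^{n-1}$-null subset of $\xi^\bot$, this yields $\tau_k(\widetilde{v}^\xi_y) \in BV_{loc}(\widetilde{U}^\xi_y)$ for every $k$, together with
\begin{equation*}
\int_{\xi^\bot} |D(\tau_k(\widetilde{v}^\xi_y))|(\widetilde{B}^\xi_y)\, d\HH^{n-1}(y) \leq \theta(B).
\end{equation*}
From this family of truncation inequalities, and using the density of $\{\tau_k\}$ in $\mathcal{T}$ combined with approximating the identity on bounded sub-intervals by suitable $\tau_k$'s, one recovers the existence of approximate one-sided limits of $\widetilde{v}^\xi_y$ at all but countably many points, the local finiteness of $J^1_{\widetilde{v}^\xi_y}$, and the $BV_{loc}$ regularity of $\widetilde{v}^\xi_y$ itself.

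The main obstacle is then the reverse estimate matching the first step, namely the representation
\begin{equation*}
|Dw|(E \setminus J^1_w) + \HH^0(E \cap J^1_w) = \sup_{\{E_j\},\{\tau_j\}} \sum_{j} |D(\tau_j(w))|(E_j),
\end{equation*}
where the supremum runs over finite Borel partitions $E = \bigsqcup_j E_j$ and over choices $\tau_j \in \mathcal{T}$. This identity rests on the classical characterization of the left-hand side as $\sup \sum_j \min(|w(t_j) - w(t_{j-1})|, 1)$ over finite ordered partitions $t_0 < t_1 < \cdots < t_N$, together with the elementary fact that for every pair $a, b \in \R$ there exists $\tau \in \mathcal{T}$ achieving $|\tau(b) - \tau(a)| = \min(|b - a|, 1)$. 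Applying this representation slicewise, integrating against $\HH^{n-1}$, exchanging the supremum with the integral by an exhaustion argument, and using the additivity of $\theta$ over Borel partitions of $B$ then leads to the bound on the $(ii)$ integrand. The delicate technical point, and the most involved step of the argument, is the measurable selection of the partitions $\{E_j\}$ and of the truncations $\tau_j$ as functions of the slicing parameter $y$, which has to be carried out carefully to ensure that the resulting Fubini-type exchange is justified.
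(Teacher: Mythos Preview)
The paper does not prove this theorem at all: it is stated explicitly as a quotation of \cite[Theorem~3.5]{dal} and is used as a black box in the subsequent Corollary~\ref{c:DM-3.5}. So there is no ``paper's proof'' to compare against beyond the original argument of Dal Maso.

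Your sketch follows that original argument closely and is essentially correct in outline. The $(ii)\Rightarrow(i)$ direction is exactly as you describe. For $(i)\Rightarrow(ii)$, the structure you propose (countable family in~$\mathcal{T}$, slicewise $BV$ for each $\tau_k(v)$, then recovery of $\widetilde{v}^\xi_y\in BV_{loc}$ and the supremum representation over finite Borel partitions with varying $\tau_j$) is indeed how Dal Maso proceeds. One remark: the step you flag as the most delicate, the measurable selection of $(E_j,\tau_j)$ in~$y$, is in fact bypassed in~\cite{dal}. Rather than selecting $y$-dependent partitions, one shows directly that the set function $B\mapsto \sup \sum_j |D_\xi(\tau_j(v))|(B_j)$ (supremum over finite Borel partitions of~$B$ and $\tau_j\in\mathcal{T}$) defines a Borel measure dominated by~$\theta$, and then disintegrates \emph{this} measure along the fibres of~$\pi_\xi$; the supremum representation is then applied on each fibre a posteriori. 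This avoids any selection argument. Your version would also work but is unnecessarily heavy.
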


As a consequence of Theorem~\ref{t:DM-3.5} we can show the equivalent property in our parametrized setting.

\begin{corollary}
\label{c:DM-3.5}
Let $\Om$ be an open subset of~$\R^{n}$ and $u \colon \Om \to \R^{n}$ measurable. Then, $u \in  GBD_{F} (\Om)$ if and only if there exists $\lambda \in \mathcal{M}^{+}_{b}(\Om)$ such that for every $U \subseteq \Om$, every $\xi \in \mathbb{S}^{n-1}$, every curvilinear projection $P_\xi$ on~$U$, every $\tau \in \mathcal{T}$, and every $B \in \mathcal{B}(U)$ it holds
\begin{equation*}
(\varphi_{\xi})_{\sharp}  \big| D_{\xi} \big( \tau((u \cdot \xi_{\varphi}) \circ \varphi_{\xi} ) \big) \big| (B) \leq  \|\dot{\varphi}_\xi\|^2_{L^\infty} \,  {\rm Lip} (P_{\xi} ; U) ^{n-1} \lambda(B)\,.
\end{equation*} 
\end{corollary}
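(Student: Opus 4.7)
The plan is to pull back both conditions to the flat parametrization domain $A_\xi$ and then invoke Theorem~\ref{t:DM-3.5} directly. Fix $U\subseteq\Om$, $\xi\in\mathbb{S}^{n-1}$ and a curvilinear projection $P_\xi\colon U\to\xi^\bot$ parametrized by $\varphi_\xi\colon A_\xi\to\R^n$ with $A_\xi=\{y+t\xi:(y,t)\in[\xi^\bot\cap {\rm B}_\rho(0)]\times(-\tau,\tau)\}$, and set $\widetilde{U}:=\varphi_\xi^{-1}(U)\subseteq A_\xi$. Define the scalar, measurable function $v\colon\widetilde{U}\to\R$ by $v:=(u\cdot\xi_\varphi)\circ\varphi_\xi$. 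The first task is to check the identity
\begin{equation*}
\widetilde v^\xi_y(t)=\hat u^\xi_y(t)\qquad\text{for $(y,t)\in[\xi^\bot\cap {\rm B}_\rho(0)]\times(-\tau,\tau)$ with $y+t\xi\in\widetilde{U}$,}
\end{equation*}
which follows because $P_\xi(\varphi_\xi(y+t\xi))=y$ and $t_{\varphi_\xi(y+t\xi)}=t$ (conditions (2)--(3) of parametrized map), so Definition~\ref{d:velocity-field} yields $\xi_\varphi(\varphi_\xi(y+t\xi))=\dot\varphi_\xi(y+t\xi)$. Moreover, for any $B\in\mathcal{B}(U)$ we have $\widetilde{B'}{}^\xi_y=B^\xi_y$ with $B':=\varphi_\xi^{-1}(B)\in\mathcal{B}(\widetilde U)$.

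With this dictionary in place, the proof is essentially a transcription. For the forward implication, assume $u\in GBD_F(\Om)$ with controlling measure $\lambda$, and for the fixed triple $(U,\xi,P_\xi)$ define the bounded Radon measure $\theta$ on~$\widetilde U$ by
\begin{equation*}
\theta(B'):=\|\dot\varphi_\xi\|^2_{L^\infty}\,{\rm Lip}(P_\xi;U)^{n-1}\,\lambda(\varphi_\xi(B'))\qquad B'\in\mathcal{B}(\widetilde U).
\end{equation*}
Condition~(2) of Definition~\ref{d:GBD}, rewritten via the identity above, is exactly condition~(ii) of Theorem~\ref{t:DM-3.5} applied to $v$ and $\theta$ on $\widetilde U$. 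That theorem then yields condition~(i), i.e.\ $|D_\xi(\tau(v))|(B')\le\theta(B')$ for every $\tau\in\mathcal{T}$ and $B'\in\mathcal{B}(\widetilde U)$. Specializing to $B'=\varphi_\xi^{-1}(B)$ and using the definition of push-forward gives the stated bound. For the reverse implication, starting from the corollary condition we read it (with $B=\varphi_\xi(B')$, recalling that $\varphi_\xi|_{\widetilde U}$ is a bi-Lipschitz diffeomorphism onto~$U$) as condition~(i) of Theorem~\ref{t:DM-3.5} for $v$ and~$\theta$; the theorem produces~(ii), which encodes both $\hat u^\xi_y\in BV_{loc}(U^\xi_y)$ for $\mathcal{H}^{n-1}$-a.e.~$y$ and the integral inequality, that is exactly Definition~\ref{d:GBD}~(1)--(2).

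The argument is mostly bookkeeping, but two small points deserve care. First, one must verify that $v$ is measurable and that $|D_\xi(\tau(v))|$ is a well-defined bounded Radon measure on~$\widetilde U$ that pushes forward to~$U$; this is immediate from the bi-Lipschitz smoothness of $\varphi_\xi|_{\widetilde U}$ and the measurability of $u$ and $\xi_\varphi$. Second, the most delicate bit is that Theorem~\ref{t:DM-3.5} is formulated on an open set with the identity parametrization (i.e.\ for $\widetilde v^\xi_y$), so one really needs the first paragraph's identity to transport the slicing integral between $\hat u^\xi_y$ and $\widetilde v^\xi_y$ without loss; once this is seen, the change-of-variables through $\varphi_\xi$ is the only piece linking the two formulations, and the quantitative constants $\|\dot\varphi_\xi\|^2_{L^\infty}$ and ${\rm Lip}(P_\xi;U)^{n-1}$ simply carry across.
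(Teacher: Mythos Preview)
Your proof is correct and follows essentially the same approach as the paper: define $v=(u\cdot\xi_\varphi)\circ\varphi_\xi$ on $\varphi_\xi^{-1}(U)$, set $\theta=\|\dot\varphi_\xi\|^2_{L^\infty}\,{\rm Lip}(P_\xi;U)^{n-1}(\varphi_\xi^{-1})_\sharp\lambda$, observe that $\widetilde v^\xi_y=\hat u^\xi_y$ and $\widetilde{B'}{}^\xi_y=B^\xi_y$, and invoke Theorem~\ref{t:DM-3.5}. Your write-up is in fact more explicit than the paper's in separating the two implications and justifying the slicing identity via the parametrized-map conditions, but the content is the same.
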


\begin{proof}
We may apply Theorem~\ref{t:DM-3.5} to the function $v = (u \cdot \xi_\varphi) \circ \varphi_{\xi} \colon \varphi_{\xi}^{-1}(U) \to \R$ and $\theta = \|\dot{\varphi}_\xi\|^2_{L^\infty} {\rm Lip} (P_{\xi} ; U) ^{n-1} (\varphi_{\xi}^{-1})_{\sharp} \lambda$ and use the fact that $\varphi_{\xi}$ is a bi-Lipschitz diffeomorphism with its image. Indeed, it is enough to notice that for every $C \in \mathcal{B} (\varphi_{\xi}^{-1}(U))$, we have $B := \varphi_{\xi}(C) \in \mathcal{B}(U)$ and $B^{\xi}_{y} = \widetilde{C}^{\xi}_{y}$ for $y \in \xi^{\bot}$. Moreover, $\widetilde{v}^{\xi}_{y} (t) = \hat{u}^{\xi}_{y} (t)$ for a.e.~$t \in B^{\xi}_{y}$.
\end{proof}

Similar to~\cite[Definition 3.7]{dal}, we now define the measures~$\mu^{\xi}_{y}$ and~$\mu^{\xi}_{u}$.

\begin{definition}
\label{d:muximeasure}
Let $\Om$ be an open subset of~$\R^{n}$, $U\subseteq \Om$ open, $\xi \in \mathbb{S}^{n-1}$, let $P_{\xi} \colon U \to \xi^{\bot}$ be a curvilinear projection on~$U$, and $u \in GBD_{F}(\Om)$. For $\mathcal{H}^{n-1}$-a.e.~$y \in \xi^{\perp}$ we define the measure $\mu^{\xi}_{y} \in \mathcal{M}^{+}_{b}( U^{\xi}_{y} )$ as
\begin{displaymath}
\mu^{\xi}_{y} (B) := |{\rm D} \hat{u}^{\xi}_{y} | \big( B^{\xi}_{y} \setminus J^{1}_{\hat{u}^{\xi}_{y}} \big)  + \HH^{0} \big( B^{\xi}_{y} \cap J^{1}_{\hat{u}^{\xi}_{y} } \big)  
\end{displaymath} 
for $B \in \mathcal{B}(U^{\xi}_{y})$. We also define $\mu^{\xi}_{u} \in \mathcal{M}^{+}_{b}(U)$ as
\begin{equation*}
\mu^{\xi}_{u} (B) := \int_{\xi^{\bot}} \Big( |{\rm D} \hat{u}^{\xi}_{y} | (B^{\xi}_{y} \setminus J^{1}_{\hat{u}^{\xi}_{y}})  + \HH^{0} (B^{\xi}_{y}\cap J^{1}_{\hat{u}^{\xi}_{y} } ) \Big) \di \mathcal{H}^{n-1}(y)
\end{equation*} 
for every $B \in \mathcal{B}(U)$.
\end{definition}

\begin{remark}
We notice that the measures $\mu^{\xi}_{y}$ and $\mu^{\xi}_{u}$ depend on the choice of the curvilinear projection~$P_{\xi}$. For simplicity, we have decided to not explicitly indicate such dependence in our notation.
\end{remark}

In the following proposition we state the lower semicontinuity of~$\mu^{\xi}_{u}(U)$ w.r.t.~$\xi \in \mathbb{S}^{n-1}$. It is worth noting that such a lower semicontinuity will be only used to extend some structure properties holding for a.e. $\xi$ to the entire of $\mathbb{S}^{n-1}$. Even if this fact can be deduced from \cite{dal}, for convenience of the reader we present here its proof.

\begin{proposition}
\label{p:xi-lsc}
Let $\Om$ be an open subset of~$\R^{n}$, $U \subseteq \Om$ open, let $(P_{\xi})_{\xi \in \mathbb{S}^{n-1}}$ be a family of curvilinear projections on~$U$, let $u \in GBD_{F}(\Om)$, and let $(\mu^{\xi}_{u})_{\xi \in \mathbb{S}^{n-1}}$ be the family of measures introduced in Definition~\ref{d:muximeasure}. Then, for every $\xi_{j}, \xi \in \mathbb{S}^{n-1}$ such that $\xi_{j} \to \xi$ we have that
\begin{equation}
\label{e:xi-lsc}
\mu^{\xi}_{u} (U) \leq \liminf_{j \to \infty} \, \mu^{\xi_{j}}_{u} (U)\,.
\end{equation}
\end{proposition}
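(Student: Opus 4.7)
The plan is to derive \eqref{e:xi-lsc} from a partition-of-unity characterization of $\mu^\xi_u(U)$ combined with the lower semicontinuity of each summand. Setting $v_\xi := (u\cdot \xi_\varphi)\circ \varphi_\xi$, Corollary~\ref{c:DM-3.5} identifies $\mu^\xi_u$ as the smallest Radon measure on $U$ dominating $(\varphi_\xi)_\sharp|D_\xi(\tau(v_\xi))|$ for every $\tau \in \mathcal{T}$. Standard properties of envelopes of Radon measures then yield
\[
\mu^\xi_u(U) = \sup \bigg\{ \sum_{i=1}^N \int_U \phi_i\, d\bigl((\varphi_\xi)_\sharp|D_\xi(\tau_i(v_\xi))|\bigr) \bigg\},
\]
where the supremum is taken over finite families $(\phi_i) \subset C^\infty_c(U)$ with $\phi_i \geq 0$ and $\sum_i \phi_i \leq 1$, and $(\tau_i) \subset \mathcal{T}$. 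Since both suprema and finite sums preserve lower semicontinuity, it will suffice to prove that for every $\tau \in \mathcal{T}$ and every $\phi \in C^\infty_c(U)$ with $0 \leq \phi \leq 1$ the map $\xi \mapsto \int_U \phi\, d((\varphi_\xi)_\sharp|D_\xi(\tau(v_\xi))|)$ is lower semicontinuous at $\xi$.

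Fix such $\tau$ and $\phi$. By the classical duality for the total variation of a distributional derivative, this integral equals the supremum, over $\eta \in C^\infty_c(U)$ with $|\eta| \leq \phi$, of $-\int \tau(v_\xi(x))\, \partial_\xi(\eta \circ \varphi_\xi)(x)\, dx$. Applying the chain rule $\partial_\xi(\eta\circ\varphi_\xi)(x) = \nabla\eta(\varphi_\xi(x))\cdot \xi_\varphi(\varphi_\xi(x))$ together with the change of variables $y = \varphi_\xi(x)$, each such term rewrites as
\[
I_\xi(\eta) := -\int_U \tau\bigl(u(y)\cdot\xi_\varphi(y)\bigr)\, \bigl(\xi_\varphi(y)\cdot\nabla\eta(y)\bigr)\, |\mathrm{J}\varphi_\xi^{-1}(y)| \, dy.
\]
Crucially, in $I_\xi(\eta)$ the measurable function $u$ now appears evaluated at the $\xi$-independent point $y \in U$, while the entire $\xi$-dependence sits in the smooth geometric fields $\xi_\varphi$ and $|\mathrm{J}\varphi_\xi^{-1}|$.

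Now fix $\eta$ and let $\xi_j \to \xi$. By the joint smoothness of $\varphi$ in $(y,\xi)$, the fields $\xi_\varphi$ and $|\mathrm{J}\varphi_\xi^{-1}|$ depend smoothly on $\xi$ and converge uniformly on $\operatorname{supp}(\eta)$. Since $\tau$ is continuous and $|\tau|\leq \tfrac12$, the integrand of $I_{\xi_j}(\eta)$ converges pointwise a.e.\ to that of $I_\xi(\eta)$ and is bounded uniformly in $j$ by a constant multiple of $|\nabla\eta|$; dominated convergence then yields $I_{\xi_j}(\eta)\to I_\xi(\eta)$. For $j$ large enough, $\eta\circ\varphi_{\xi_j} \in C^\infty_c(\varphi_{\xi_j}^{-1}(U))$ with $|\eta\circ\varphi_{\xi_j}| \leq \phi\circ\varphi_{\xi_j}$, whence, by the duality recalled above,
\[
I_{\xi_j}(\eta) \leq \int_U \phi\, d\bigl((\varphi_{\xi_j})_\sharp|D_{\xi_j}(\tau(v_{\xi_j}))|\bigr) \leq \mu^{\xi_j}_u(U).
\]
Passing to the liminf gives $I_\xi(\eta) \leq \liminf_j \mu^{\xi_j}_u(U)$; taking the supremum over $\eta$, then summing and supping over the admissible families $(\phi_i,\tau_i)$, produces \eqref{e:xi-lsc}.

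The main obstacle addressed by this approach is the mere measurability of $u$: a direct semicontinuity argument applied to $|D_\xi(\tau(v_\xi))|(\varphi_\xi^{-1}(U))$ would require control of the composition $u \circ \varphi_\xi$ as $\xi$ varies, which is unavailable for a merely measurable function. The change of variables confines $u$ to the fixed base point $y$ and moves the entire $\xi$-dependence into the smooth geometric quantities, for which the limiting argument reduces to dominated convergence.
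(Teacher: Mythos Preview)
Your approach is essentially the same as the paper's: both express $\mu^\xi_u(U)$ as a supremum over finite families (you use partitions of unity, the paper uses pairwise disjoint open sets---these are equivalent reformulations of the envelope formula from \cite[Theorem~3.8]{dal}), and both reduce the lower semicontinuity to the continuity in~$\xi$ of the pairings $\langle D_\xi(\tau(v_\xi)),\,\cdot\,\rangle$ against fixed test functions. Your explicit change of variables $y=\varphi_\xi(x)$, which isolates the merely measurable~$u$ at the $\xi$-independent point~$y$, is exactly the step needed to justify the convergence of these integrals; the paper asserts this convergence without spelling it out, so your version in fact fills in the detail the paper leaves implicit. Two minor remarks: the characterization of~$\mu^\xi_u$ you invoke is not the content of Corollary~\ref{c:DM-3.5} but rather of \cite[Theorem~3.8]{dal} (as the paper cites); and your final sentence should be read through the plan you announced earlier (``suprema and finite sums preserve lower semicontinuity'')---literally summing the inequalities $I_\xi(\eta)\le\liminf_j\mu^{\xi_j}_u(U)$ would introduce a spurious factor~$N$, so the correct route is to first deduce lower semicontinuity of each $\xi\mapsto\int_U\phi\,d((\varphi_\xi)_\sharp|D_\xi(\tau(v_\xi))|)$ and then pass to sums and suprema.
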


\begin{proof}

We prove that given $U \subseteq \Omega$ open the value $\mu^\xi_u(U)$ can be obtained as
\begin{equation*}
\label{e:some-sup}
\mu^{\xi}_{u} (U) = \sup_{k \in \mathbb{N}} \, \sup \sum_{i=1}^{k}  \big| D_{\xi} (\tau_{i} ((u \cdot \xi_\varphi) \circ \varphi_{\xi})) \big| (\varphi_{\xi}^{-1}(U_{i}))
\end{equation*}
where the second supremum is taken over all the families $\tau_{1}, \ldots \tau_{k} \in \mathcal{T}$ and all the families of pairwise disjoint open subsets~$U_{1}, \ldots U_{k}$ of~$U$. To this purpose, let $\varphi_{\xi}$ be the parametrization of~$P_{\xi}$ over~$U$, as given in Definition~\ref{d:CP}. Denoting $v :=(u \cdot \xi_\varphi) \circ \varphi_{\xi}$, by \cite[Theorem~3.8]{dal} we have that for every open set $V \subseteq \varphi^{-1}_{\xi}(U)$ it holds
\begin{align*}
\label{e:some-sup-2}
\int_{\xi^{\bot}} &  \Big( |{\rm D} \widetilde{v}^{\xi}_{y} | (\widetilde{V}^{\xi}_{y} \setminus J^{1}_{\widetilde{v}^{\xi}_{y}})  + \HH^{0} (\widetilde{V}^{\xi}_{y}\cap J^{1}_{\widetilde{v}^{\xi}_{y} } ) \Big) \di \mathcal{H}^{n-1}(y)
= \sup_{k \in \mathbb{N}} \, \sup \sum_{i=1}^{k}  \big| D_{\xi} (\tau_{i} (v)) \big| (V_{i})\,, 
\end{align*}
where the second supremum is taken over all the families $\tau_{1}, \ldots \tau_{k} \in \mathcal{T}$ and all the families of pairwise disjoint open subsets~$V_{1}, \ldots V_{k}$ of~$V$. Using the fact that $\varphi_{\xi}$ is a bi-Lipschitz diffeomorphism with its image, we have that for $V= \varphi_{\xi}^{-1}(U)$ with $U \subseteq \Omega$ open it holds $\widetilde{V}^{\xi}_{y} = U^{\xi}_{y}$ for every $\xi \in \mathbb{S}^{n-1}$ and every $y \in \xi^{\bot}$. Moreover, $\widetilde{v}^{\xi}_{y} (t) = \hat{u}^{\xi}_{y}(t)$ for a.e.~$t \in U^{\xi}_{y}$.

Now we observe that for every $\psi \in C^{1}_{c} (\varphi^{-1}_{\xi} (U))$ and for $j \in \mathbb{N}$ large enough we have that $\psi \in C^{1}_{c} (\varphi^{-1}_{\xi_{j}} (U))$. Thus, for every $\tau \in \mathcal{T}$ we have
\begin{align*}
\int_{\varphi^{-1}_{\xi}(U)} \tau ((u \cdot\xi_{\varphi} ) (\varphi_{\xi}(z)) \nabla\psi(z) \cdot \xi \, \di z = \lim_{j\to\infty}\, \int_{\varphi^{-1}_{\xi_{j} }(U)} \tau (u \cdot\xi_{j, \varphi}) (\varphi_{\xi_{j}}(z)) \nabla\psi(z) \cdot \xi_{j} \, \di z\,.
\end{align*}
This, together with Definition~\ref{d:CP} of curvilinear projection, implies~\eqref{e:xi-lsc}.
\end{proof}

\subsection{Equivalence of $GBD({\rm M})$ and $GBD_{F}(\Om)$.}
\label{s:equivalence}

\RRR Let $({\rm M}, g)$ be a Riemannian manifold of dimension~$n$. \EEE  We now show that, when restricted to a chart $(U,\psi)$ on~$\rm M$ with $U \Subset \rm M$, the spaces~$GBD(U)$ and~$GBD_{F}(\psi(U))$ \RRR are equivalent, \EEE with the field~$F$  defined in~\eqref{e:chris}. We recall that properties~\eqref{hp:F} and~\eqref{hp:F2} for \RRR such \EEE $F$ have been proven in~\cite[Section~6.4]{AT_22-preprint}. The above equivalence implies that the structure of the space~$GBD(\rm M)$ can be deduced \RRR from \EEE the structure of~$GBD_{F} (\Om)$, that will be discussed in the next sections. \RRR Using \EEE the classical notation for manifolds we write $\omega \in \mathcal{D}^{1}(\rm M)$ in coordinates on a chart~$(U, \psi)$ as
\begin{equation*}
\omega(p) = \sum_{i=1}^{n} \omega_{i} (p) g^{i} (p) \qquad \text{for $p \in U$,}
\end{equation*}
where $\omega_{i} \colon U \to \R$ is measurable and for every $i =1, \ldots, n$ and every $f \in C^{\infty} (\rm M)$ we have $g_{i} \in \Gamma({\rm TM})$
\begin{equation*}
g_{i} (f) (p) := \frac{\partial (f \circ \psi^{-1})}{\partial x_{i}} (\psi(p))\qquad \text{for $p \in U$,}
\end{equation*}
and $g^{i} \in \mathcal{D}^{1} (\rm M)$ is such that $\langle g^{i} (p) , g_{j} (p) \rangle = \delta_{ij}$. We also define the function~$u \colon \psi(U) \to \R^{n}$ as
\begin{equation}
\label{e:vpsi}
u(x) := \sum_{i=1}^{n} \omega_{i} (\psi^{-1} (x)) e_{i}\qquad \text{for $x \in \psi(U)$,}
\end{equation}
where $\{e_{i}\}_{i=1}^{n}$ denotes the canonical basis of~$\R^{n}$.

\begin{proposition}
\label{p:equivalent}
Let $({\rm M}, g)$ be a Riemannian manifold of dimension~$n$, let~$\omega \in GBD(\rm M)$, and let~$(U,\psi)$ be a chart on~$\rm M$ with~$U \Subset \rm M$. Then, the function~$u\colon \psi(U) \to \R^{n}$ defined in~\eqref{e:vpsi} belongs to~$GBD_{F} (\psi(U))$ for~$F$ as in~\eqref{e:chris}. In particular, if $\lambda \in \mathcal{M}_{b}^{+} (\rm M)$ can be used in~(2) of Definition~\ref{d:GBD_M} for $\omega$, then 
\begin{equation}
\label{e:lambda-bar}
\overline{\lambda}:=  \big[ {\rm Lip} (\psi^{-1}; \psi(U)) ^{2}  \RRR {\rm Lip} (\psi; U )^{n-1} \EEE \big] \psi_{\sharp}\lambda  \quad  \in \mathcal{M}_{b}^{+} (\psi(U))
\end{equation}
can be used in~(2) of Definition~\ref{d:GBD} for~$u$.
\end{proposition}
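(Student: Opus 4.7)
The plan is to reduce every statement for $u$ on $\psi(U) \subseteq \R^n$ to the corresponding statement for $\omega$ on $V := \psi^{-1}(\Omega) \subseteq {\rm M}$ through the chart $(U, \psi)$, by way of the second half of Remark~\ref{r:M2}. To verify that $u \in GBD_{F}(\psi(U))$ I would fix an open set $\Omega \subseteq \psi(U)$, a direction $\xi \in \mathbb{S}^{n-1}$, and an arbitrary curvilinear projection $\overline{P} \colon \Omega \to \xi^{\bot}$ with parametrization $\overline{\varphi}$. By Remark~\ref{r:M2}, the composition $P := \overline{P} \circ \psi$ is a curvilinear projection on $V$ (in the Riemannian sense of Definition~\ref{d:CP_M}) with parametrization $\varphi := \psi^{-1} \circ \overline{\varphi}$, so I can then apply Definition~\ref{d:GBD_M} to $\omega$ on the triple $(V, \xi, P)$ and transport the slicing inequality back to $\Omega$.

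The crucial identification is the pointwise equality $\hat{\omega}^{\xi}_{y}(t) = \hat{u}^{\xi}_{y}(t)$ for every $y \in \xi^{\bot}$ and $t \in V^{\xi}_{y} = \Omega^{\xi}_{y}$. Writing $\omega = \sum_{i} \omega_{i} g^{i}$ in the chart and using that the velocity of the lifted curve admits the coordinate expansion $\dot{\varphi}(y+t\xi) = \sum_{j} \dot{\overline{\varphi}}_{j}(y+t\xi)\, g_{j}(\varphi(y+t\xi))$, the duality $\langle g^{i}, g_{j} \rangle = \delta_{ij}$ gives
\[
\hat{\omega}^{\xi}_{y}(t) = \sum_{i} \omega_{i}(\varphi(y+t\xi))\,\dot{\overline{\varphi}}_{i}(y+t\xi) = u(\overline{\varphi}(y+t\xi)) \cdot \dot{\overline{\varphi}}(y+t\xi) = \hat{u}^{\xi}_{y}(t)\,,
\]
by the very definition \eqref{e:vpsi} of $u$. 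In particular item (1) of Definition~\ref{d:GBD} for $u$ is immediate from item (1) of Definition~\ref{d:GBD_M} for $\omega$, and the integrands in item (2) of the two definitions coincide when the Borel sets are related by $\widetilde{B} = \psi^{-1}(B)$, since $\widetilde{B}^{\xi}_{y} = B^{\xi}_{y}$ and $J^{1}_{\hat{\omega}^{\xi}_{y}} = J^{1}_{\hat{u}^{\xi}_{y}}$.

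It then remains to compare the multiplicative constants. From $\dot{\overline{\varphi}} = d\psi \cdot \dot{\varphi}$ and $\dot{\varphi} = d\psi^{-1} \cdot \dot{\overline{\varphi}}$, together with the Lipschitz bounds on $\psi$ and $\psi^{-1}$, one has $\|\dot{\varphi}\|^{2}_{L^{\infty}, {\rm M}} \leq {\rm Lip}(\psi^{-1}; \psi(U))^{2} \|\dot{\overline{\varphi}}\|^{2}_{L^{\infty}}$, while $P = \overline{P} \circ \psi$ gives ${\rm Lip}(P; V) \leq {\rm Lip}(\overline{P}; \Omega)\,{\rm Lip}(\psi; U)$, and of course $\lambda(\widetilde{B}) = \psi_{\sharp} \lambda (B)$. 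Inserting these three inequalities into the slicing bound produced by Definition~\ref{d:GBD_M}(2) for $\omega$ and collecting the prefactors reproduces exactly the constant $[{\rm Lip}(\psi^{-1}; \psi(U))^{2}\,{\rm Lip}(\psi; U)^{n-1}]$ appearing in \eqref{e:lambda-bar}, so that the inequality
\[
\int_{\xi^{\bot}} \Big( |{\rm D}\hat{u}^{\xi}_{y}|(B^{\xi}_{y} \setminus J^{1}_{\hat{u}^{\xi}_{y}}) + \mathcal{H}^{0}(B^{\xi}_{y} \cap J^{1}_{\hat{u}^{\xi}_{y}}) \Big)\, \di\mathcal{H}^{n-1}(y) \leq \|\dot{\overline{\varphi}}\|^{2}_{L^{\infty}} {\rm Lip}(\overline{P}; \Omega)^{n-1} \overline{\lambda}(B)
\]
holds as required.

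The only substantive step is the coordinate identification $\hat{\omega}^{\xi}_{y} = \hat{u}^{\xi}_{y}$, and in particular the expansion of $\dot{\varphi}$ with respect to the chart frame $\{g_{j}\}$; everything else is administrative manipulation of Lipschitz constants and push-forwards. A minor point to be careful about is that $\|\dot{\varphi}\|_{L^{\infty},{\rm M}}$ is a Riemannian supremum while $\|\dot{\overline{\varphi}}\|_{L^{\infty}}$ is Euclidean, which is exactly the reason why the factor ${\rm Lip}(\psi^{-1}; \psi(U))^{2}$ shows up squared in the definition of $\overline{\lambda}$, whereas ${\rm Lip}(\psi; U)^{n-1}$ appears to the $(n-1)$-th power because it compensates the transversality/Lipschitz power of $P$ inherited from the slicing in Definition~\ref{d:GBD_M}(2).
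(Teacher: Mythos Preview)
Your proposal is correct and follows essentially the same route as the paper: both arguments hinge on the coordinate identity $\hat{\omega}^{\xi}_{y}=\hat{u}^{\xi}_{y}$, obtained by expanding $\dot\varphi$ in the chart frame $\{g_j\}$ and using $\langle g^i,g_j\rangle=\delta_{ij}$, and then on the two Lipschitz comparisons $\|\dot\varphi\|^2_{L^\infty,\rm M}\le {\rm Lip}(\psi^{-1})^2\|\dot{\overline\varphi}\|^2_{L^\infty}$ and ${\rm Lip}(P)\le{\rm Lip}(\psi)\,{\rm Lip}(\overline P)$. The only organizational difference is that the paper first starts from a curvilinear projection $P$ on an open $V\subseteq\rm M$, pushes it forward to $\overline P=P\circ\psi^{-1}$, derives the estimate, and at the very end invokes Remark~\ref{r:M2} once more to argue that this covers \emph{every} curvilinear projection on open subsets of $\psi(U)$; you instead begin directly with an arbitrary $\overline P$ on $\Omega\subseteq\psi(U)$ and lift it via the second half of Remark~\ref{r:M2}, which is slightly more economical since it matches the quantifier structure of Definition~\ref{d:GBD} from the outset.
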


\begin{proof}
Let $\omega  \in \mathcal{D}^{1}(\rm M)$, let $V \subseteq \rm M$ open, let $P\colon V \to \xi^{\bot}$ be a curvilinear projection on~$V$ with parametrization~$\varphi \colon \{ y + t\xi: \, (t, \xi) \in [\xi^{\bot} \cap {\rm B}_{\rho} (0)] \times (-\tau, \tau) \} \to \rm M$, and let~$(U, \psi)$ be a chart on~$\rm M$ with $U \Subset V$. We set $\overline{P}:= P \circ \psi^{-1}$ and $\overline{\varphi}:= \psi\circ \varphi$. Thanks to Remark~\ref{r:M2},~$\overline{P}$ is a curvilinear projection on~$\psi(U)$ with respect to~$F$ defined in~\eqref{e:chris} with parametrization~$\overline{\varphi}$. We write $\dot{\varphi}$ in coordinates as
\begin{align}
\label{e:dotphi}
\dot{\varphi} (y + t\xi ) & = \sum_{i=1}^{n} \bigg[\frac{\partial(\psi\circ \varphi (y + t\xi) )}{\partial t}  \bigg]_{i} g_{i} ( \varphi (y + t\xi ) ) 
\\
&
=  \sum_{i=1}^{n} \bigg[\frac{\partial \overline{\varphi}(y + t\xi) }{\partial t}  \bigg]_{i} g_{i} ( \varphi (y + t\xi ) ) 
=  \sum_{i=1}^{n}  \dot{\overline{\varphi}}_{i} (y + t\xi)  g_{i} ( \varphi (y + t\xi ) ) \,. \nonumber
\end{align}
Hence, for $(y, t) \in [\xi^{\bot} \cap {\rm B}_{\rho} (0)] \times (-\tau, \tau)$ we have that
\begin{align*}
\hat{\omega}^{\xi}_{y} (t) & = \left\langle  \omega(\varphi(y + t\xi) , \dot{\varphi} (y + t\xi) \right\rangle _{\varphi(y + t\xi)} 
\\
&
 = \left\langle \sum_{i=1}^{n} \omega_{i} (\varphi(y + t\xi)) \,  g^{i} ( \varphi (y + t\xi ) ) ,  \sum_{i=1}^{n}  \dot{\overline{\varphi}}_{i} (y + t\xi)  g_{i} ( \varphi (y + t\xi ) ) \right\rangle_{\varphi(y + t\xi)} 
\\
&
 = \sum_{i=1}^{n}  \omega_{i} (\varphi(y + t\xi)) \dot{\overline{\varphi}}_{i} (y + t\xi) = \sum_{i=1}^{n} u_{i} (\overline{\varphi} (y + t\xi)) \dot{\overline{\varphi}}_{i} (y + t\xi) = \hat{u}^{\xi}_{y} (t) \,.
\end{align*} 
This implies that for every $B \in \mathcal{B} (\psi(U))$ it holds true
\begin{align}
\label{e:man-rn}
 \int_{\xi^{\bot}}&   \big(\big| | {\rm D} \hat{u}^{\xi}_{y} | ( B^{\xi}_{y} \setminus J^{1}_{\hat{u}^{\xi}_{y}}) + \mathcal{H}^{0} (  B^{\xi}_{y} \cap J^{1}_{\hat{u}^{\xi}_{y}} ) \big) \, \di \mathcal{H}^{n-1} (y)
\\
&
= \int_{\xi^{\bot}} \Big(\big|  {\rm D} \hat{\omega}^{\xi}_{y} | ( \psi^{-1} (B)^{\xi}_{y} \setminus J^{1}_{\hat{\omega}^{\xi}_{y}}) + \mathcal{H}^{0} (  \psi^{-1} (B)^{\xi}_{y} \cap J^{1}_{\hat{\omega}^{\xi}_{y}} ) \Big) \, \di \mathcal{H}^{n-1} (y)  \nonumber
\\
&
\leq \RRR \|  \dot\varphi  \|^2_{L^{\infty}, \rm M} \EEE \, {\rm Lip} (P; U)^{n-1} \lambda(\psi^{-1} (B))\,. \nonumber
\end{align}
By definition of~$\overline{P}$ we have that
\begin{equation}
\label{e:LIPP}
{\rm Lip} (P ;  U ) \leq {\rm Lip} (\psi; U ) \, {\rm Lip}(\overline{P}; \psi(U))\,.
\end{equation}
Since $\rm M$ is a Riemannian manifold and $U \Subset V$, by~\eqref{e:dotphi} we have that for $p \in U$
\begin{align*}
|\dot{\varphi} |_{p} & = \sum_{i, j=1}^{n}\bigg[\frac{\partial(\psi\circ \varphi (\RRR P(p) \EEE + t\xi) )}{\partial t} (t_{p})  \bigg]_{i}  \bigg[\frac{\partial(\psi\circ \varphi (\RRR P(p) \EEE + t\xi) )}{\partial t} (t_{p}) \bigg]_{j}   
\\
&
\qquad\qquad g_{i} (\varphi (P(p) + t_{p}\xi )) \cdot g_{j} (\varphi (P(p) + t_{p}\xi )) \nonumber
\\
&
\nonumber = \sum_{i, j=1}^{n}  \dot{\overline{\varphi}}_{i} (P(p) + t_{p}\xi)  \dot{\overline{\varphi}}_{j} (P(p) + t_{p}\xi) g_{i} ( \varphi (P(p) + t_{p}\xi ) ) \cdot g_{j} ( \varphi (P(p) + t_{p}\xi ) )\,,
\end{align*}
where we have denoted by $t_{p}$ the unique $t \in (-\tau, \tau)$ such that $p = \varphi (P(p) + t_{p} \xi)$. Hence, we deduce that
\begin{align}
\label{e:dotphi111}
\RRR \| \dot{\varphi}   \|_{L^{\infty}, \rm M} \EEE  \leq  {\rm Lip} (\psi^{-1}; \psi(U)) ^{2}  \| \dot{\overline{\varphi}} \|_{L^{\infty} (\psi(U))}\,.
\end{align}
Setting~$\overline{\lambda}$ as in~\eqref{e:lambda-bar}, we deduce from~\eqref{e:man-rn}--\eqref{e:dotphi111} that for every $B \in \mathcal{B} (\psi(U))$ it holds
\begin{align}
\label{e:M3}
 \int_{\xi^{\bot}}  \Big(\big| | {\rm D} \hat{u}^{\xi}_{y} | ( B^{\xi}_{y} \setminus J^{1}_{\hat{u}^{\xi}_{y}}) & + \mathcal{H}^{0} (  B^{\xi}_{y} \cap J^{1}_{\hat{u}^{\xi}_{y}} ) \big) \, \di \mathcal{H}^{n-1} (y)
 \\
 &
  \leq \| \dot{\overline{\varphi}}\|^2_{L^{\infty} (\psi(U))} {\rm Lip} (\overline{P}; \psi(U))^{n-1} \overline{\lambda} (B)\,. \nonumber
\end{align}
By \RRR the first equality in~\eqref{e:man-rn} \EEE and by Remark~\ref{r:M2} we can show that~\eqref{e:M3} actually holds for every curvilinear projection~$\widetilde{P}\colon \Om \to \xi^{\bot}$  on some open subset~$\Om$ of~$\psi(U)$ with respect to the field~$F$ in~\eqref{e:chris}. This implies that $u \in GBD_{F} (U)$. 
\end{proof}

\begin{remark}
\label{r:lambda-bar}
For later use, we notice that for every $\epsilon>0$ we may further assume, up to rescaling~$\psi$ and taking~$U \Subset V$ small enough (depending on~$\epsilon)$), that 
\begin{displaymath}
 {\rm Lip} (\psi; U ) < 1 + \epsilon \,, \qquad  {\rm Lip} (\psi^{-1}; \psi(U)) < 1 + \epsilon\,,
 \end{displaymath}
 which in turn implies that $\overline{\lambda} \leq \RRR (1 + \epsilon)^{n+1}\EEE \psi_{\sharp} \lambda$, with the notation used in~\eqref{e:lambda-bar}.
\end{remark}

\section{A particular family of curvilinear projections and its properties}
\label{s:curvilinear}
 
 We recall here the construction of a local family of curvilinear projections discussed in~\cite[Section~3.3]{AT_22}.

\begin{definition}
\label{d:curvpro}
Let $x_0 \in \mathbb{R}^n$ and $\rho_{0}>0$. For every $\xi \in {\rm B}_{2}(0)$ and every $y \in \xi^{\bot}\cap {\rm B}_{\rho_{0}}(0)$ we consider the solution $t \mapsto u_{\xi, y}(t)$ \RRR to \EEE the ODE system
\begin{equation*}
    \begin{cases}
    \ddot{u}(t) = F(u(t), \dot{u}(t)) &  t \in \mathbb{R},\\
    u(0)=y+x_0\,,\\
    \dot{u}(0)=\xi\,,
    \end{cases}
\end{equation*}
which is well-defined for $t \in (-\tau, \tau)$, for a suitable $\tau >0$ depending only on~$x_{0}$ and~$\rho_{0}$, but not on~$\xi$ and~$y$. Then, we define $\varphi_{\xi, x_{0}} \colon \mathbb{R}^n \to \mathbb{R}^n$ as follows: for every $x \in \R^{n}$, if $x = y + t\xi$ with $y \in  \xi^{\bot}\cap {\rm B}_{\rho_{0}}(0)$ and $t \in (-\tau, \tau)$, we set $\varphi_{\xi, x_{0}} (x) := u_{\xi, y}(t)$.

We further define $\varphi_{x_{0}} \colon \R^{n} \times \mathbb{S}^{n-1} \to \R^{n}$ as $\varphi_{x_{0}} (x, \xi) := \varphi_{\xi, x_{0}} (x)$ for $x \in \R^{n}$ and $\xi \in \mathbb{S}^{n-1}$.
\end{definition}

In \cite[Corollary~3.21]{AT_22-preprint} it has been shown that for every $x_{0} \in \Om$ there exists $R_{x_{0}}>0$ such that for every $\xi \in {\rm B}_{2}(0)$ the map $\varphi_{\xi, x_{0}}^{-1} \lfloor {\rm B}_{R_{x_{0}}} (x_{0})$ is a diffeomorphism with its image. This justifies the following definition.

\begin{definition}
\label{d:P-xi}
Let $x_{0} \in \Om$, $\xi \in {\rm B}_{2}(0)$ and $R_{x_{0}}>0$ be as above. We define the map $P_{\xi,x_0} \colon {\rm B}_{R_{x_{0}}} (x_{0})  \to \xi^\bot$ as
\begin{equation*}
    P_{\xi,x_0}:=\pi_\xi \circ \varphi_{\xi, x_{0}} ^{-1},
\end{equation*}
where $\pi_\xi \colon \mathbb{R}^n \to \xi^\bot$ denotes the orthogonal projection onto the orthogonal to $\xi$. 
\end{definition}

In~\cite[Theorem~3.25]{AT_22-preprint} the following result has been proven.

\begin{theorem}
\label{p:curvpro}
Let $F \in C^{\infty}( \mathbb{R}^n \times \mathbb{R}^n ; \mathbb{R}^n)$ satisfy condition~\eqref{hp:F}. Then, for every $x_0 \in \Omega$ there exists $R_0>0$ such that the family of maps $\{ P_{\xi,x_0} \colon {\rm B}_{R_0}(x_0) \to \xi^\bot: \xi \in \mathbb{S}^{n-1}\}$ is a family of curvilinear projections on~${\rm B}_{R_{0}}(x_{0})$. 
\end{theorem}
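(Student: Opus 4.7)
The plan is to verify, for $R_{0}>0$ sufficiently small, each of the four conditions in Definition~\ref{d:CP} for the family $\{P_{\xi,x_{0}}\}_{\xi\in\mathbb{S}^{n-1}}$ defined on ${\rm B}_{R_{0}}(x_{0})$, with parametrization $\varphi_{x_{0}}(\cdot,\xi)=\varphi_{\xi,x_{0}}(\cdot)$ as in Definition~\ref{d:curvpro}.

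\textbf{Parametrization and item (2).} By standard smooth dependence of ODE solutions on initial conditions and parameters, $\varphi_{x_{0}}\colon A\to\R^{n}$ is smooth on an open neighborhood $A$ of the form required in Definition~\ref{d:param}, for suitable $\rho,\tau>0$ depending only on $x_{0}$. Since $\partial_{x}\varphi_{\xi,x_{0}}(0)={\rm id}$ uniformly in $\xi$ (as $\xi$ ranges over the compact set~$\mathbb{S}^{n-1}$), the cited Corollary~3.21 of \cite{AT_22-preprint} provides a uniform radius $R_{x_{0}}$ on which each $\varphi_{\xi,x_{0}}^{-1}$ is a diffeomorphism. Taking $R_{0}\le R_{x_{0}}$, condition (1) of Definition~\ref{d:param} is satisfied, and by construction each $t\mapsto\varphi_{\xi,x_{0}}(y+t\xi)$ solves $\ddot\gamma=F(\gamma,\dot\gamma)$, so item~(2) of Definition~\ref{d:CP-maps} holds and hence item~(2) of Definition~\ref{d:CP}.

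\textbf{Item (3): transversality.} This is the most delicate point; I would argue by perturbation from the Euclidean (linear) case $F\equiv 0$, for which $\varphi_{\xi,x_{0}}$ reduces to the translation $x\mapsto x+x_{0}$ and $P_{\xi,x_{0}}$ to the orthogonal projection $\pi_{\xi}(\cdot - x_{0})$, for which (H.1)--(H.3) are trivially satisfied. A Taylor expansion in the initial data gives
\begin{equation*}
\varphi_{\xi,x_{0}}(y+t\xi)=x_{0}+y+t\xi+O(|t|^{2}+|y||t|)
\end{equation*}
uniformly in $\xi\in\mathbb{S}^{n-1}$, with analogous estimates for derivatives in $\xi$ up to second order. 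Inverting and projecting, the maps $P^{i}_{\xi}$ and $T^{i}_{xx'}(\xi)$ associated with our family are $C^{2}$-close (in $(\xi,x)$) to those of the orthogonal projection family on ${\rm B}_{R_{0}}(x_{0})$. Since the bounds (H.1) and (H.3) are stability (open) conditions on $C^{2}$-norms, they persist for $R_{0}$ small. For (H.2) (the nondegeneracy of $J_{\xi}T^{i}_{xx'}$), one checks that at $F\equiv0$ the lower bound is satisfied strictly, and the uniform perturbation estimates guarantee it continues to hold for $R_{0}$ small.

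\textbf{Item (4): the velocity diffeomorphism.} By Definition~\ref{d:velocity-field} and the initial condition $\dot\varphi_{\xi,x_{0}}(0)=\xi$, one has $\xi_{\varphi}(x_{0})=\xi$, so the map $\xi\mapsto \xi_{\varphi}(x_{0})/|\xi_{\varphi}(x_{0})|$ is the identity of $\mathbb{S}^{n-1}$. By smoothness of the flow $\varphi_{x_{0}}$ in both $x$ and $\xi$, the map $(x,\xi)\mapsto\xi_{\varphi}(x)/|\xi_{\varphi}(x)|$ is $C^{1}$ and agrees with the identity on $\{x_{0}\}\times\mathbb{S}^{n-1}$. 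Shrinking $R_{0}$ once more, the $\xi$-derivative stays $C^{0}$-close to the identity on ${\rm T}\mathbb{S}^{n-1}$ for every $x\in {\rm B}_{R_{0}}(x_{0})$, so by the inverse function theorem (and compactness of $\mathbb{S}^{n-1}$) each such map is a diffeomorphism of $\mathbb{S}^{n-1}$ onto itself.

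\textbf{Main obstacle.} The hard part is the rigorous verification of transversality, specifically (H.2), since it requires quantitative $C^{2}$ ODE-stability estimates on the family $T^{i}_{xx'}(\xi)$ uniformly in $x,x'\in {\rm B}_{R_{0}}(x_{0})$ with $x\neq x'$; all other items either follow directly from the initial conditions of the ODE or are essentially borrowed from \cite[Corollary~3.21, Theorem~3.25]{AT_22-preprint}.
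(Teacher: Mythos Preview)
The paper does not give a proof of this theorem here; it simply records it as a citation of \cite[Theorem~3.25]{AT_22-preprint}, so there is no in-paper argument to compare against. Your outline is a reasonable reconstruction of the natural strategy (verify items (1)--(4) of Definition~\ref{d:CP} by a perturbation argument from the Euclidean case $F\equiv 0$, using smooth dependence of ODE solutions on initial data and compactness of~$\mathbb{S}^{n-1}$), and you correctly isolate the quantitative transversality bound~\eqref{hp:H2} as the only genuinely delicate point.

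One small remark: in your final paragraph you list \cite[Theorem~3.25]{AT_22-preprint} among the ingredients you would borrow, but that theorem \emph{is} the statement you are proving, so invoking it is circular. The honest dependencies are the uniform invertibility of $\varphi_{\xi,x_{0}}$ (the cited Corollary~3.21) and the $C^{\infty}_{loc}$ convergence $\varphi_{\xi,x_{0},r}\to{\rm id}$, $P_{\xi,x_{0},r}\to\pi_{\xi}$ uniformly in~$\xi$ (Lemma~\ref{l:3.20-24}); these are exactly what make your perturbative verification of (H.1)--(H.3) and of item~(4) go through. With that correction, your sketch is sound.
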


We further recall the definitions of~$\varphi_{\xi, x_{0}, r}$, of~$P_{\xi, x_{0}, r}$, and of the exponential map \RRR in~$\R^{n}$ induced by~$F$ \EEE (see also~\cite[Definitions~3.11, 3.19, and 3.22]{AT_22-preprint}).

\begin{definition}
\label{d:varphi-xi-r}
Let $x_{0} \in \R^{n}$ and $R_{0}>0$ be as in Theorem~\ref{p:curvpro}, and let $r>0$. For every $\xi \in {\rm B}_{2}(0)$ we define
\begin{align*}
\varphi_{\xi,x_0,r}(x) &  :=r^{-1}(\varphi_{ \xi, x_{0}} (rx) - x_0) \qquad \text{\RRR for $x \in {\rm B}_{\frac{R_{0}}{ r}} (0)$,\EEE}\\
 P_{\xi,x_0, r} & :=\pi_\xi \circ \varphi_{\xi, x_{0},r} ^{-1}\,.
\end{align*}
\end{definition}

\RRR
In \cite[Lemmas~3.20 and 3.24]{AT_22-preprint} we have proven the following.

\begin{lemma}
\label{l:3.20-24}
Let $x_{0} \in \R^{n}$ and let~$\varphi_{\xi,x_0,r}$ and~$P_{\xi,x_0, r}$ be as in Definition~\ref{d:varphi-xi-r}. Then, 
\begin{align*}
& \varphi_{\xi,x_0,r} \to id \qquad \text{in $C^{\infty}_{loc} (\R^{n}; \R^{n})$,}\\
& P_{\xi,x_0, r} \to \pi_{\xi} \qquad \text{in $C^{\infty}_{loc} (\R^{n}; \R^{n})$,}
\end{align*}
and the convergences are uniform w.r.t.~$\xi \in \mathbb{S}^{n-1}$.
\end{lemma}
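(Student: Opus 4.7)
The plan is to reduce both convergences to the classical smooth dependence of ODE solutions on the right-hand side and on initial data, applied to the rescaled field $F_r(z,\zeta) := rF(x_0 + rz, \zeta)$ introduced in Section~\ref{s:definition}.

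First I would derive the ODE satisfied by the rescaled parametrization along every ``slice''. For fixed $\xi \in \mathbb{S}^{n-1}$ and $y \in \xi^{\bot}$, setting $v_r(t) := \varphi_{\xi,x_0,r}(y + t\xi) = r^{-1}(\varphi_{\xi,x_0}(r(y+t\xi)) - x_0)$, a direct computation using the defining ODE for $u_{\xi,ry}$ gives the Cauchy problem
\begin{equation*}
\ddot{v}_r(t) = F_r(v_r(t), \dot{v}_r(t)), \qquad v_r(0) = y, \qquad \dot{v}_r(0) = \xi.
\end{equation*}
Since $F \in C^{\infty}(\R^n \times \R^n;\R^n)$, on any compact subset of $\R^n \times \R^n$ and for every multi-index $\alpha$ one has $|\partial^\alpha F_r| = O(r)$ as $r \to 0$. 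Hence $F_r \to 0$ in $C^\infty_{loc}$.

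Next I would apply the smooth dependence of ODE solutions on the vector field, the initial data, and parameters: the limit equation $\ddot{v}=0$ with $v(0)=y$, $\dot{v}(0)=\xi$ has explicit solution $v(t) = y + t\xi$, so $v_r(t; y, \xi) \to y + t\xi$ in $C^\infty_{loc}$ jointly in $(t,y,\xi)$, the latter varying in compact subsets of $\R \times \R^n \times \mathbb{S}^{n-1}$. Since $(x,\xi) \mapsto (\pi_\xi x, \, x \cdot \xi)$ is smooth and $\varphi_{\xi,x_0,r}(x) = v_r(x{\,\cdot\,}\xi;\, \pi_\xi x,\, \xi)$, this slicewise $C^\infty_{loc}$ convergence transfers to $\varphi_{\xi,x_0,r} \to \mathrm{id}$ in $C^\infty_{loc}(\R^n;\R^n)$, with the uniformity in $\xi \in \mathbb{S}^{n-1}$ being automatic because $\mathbb{S}^{n-1}$ is compact and the constants in the smooth-dependence theorem are continuous in the parameters. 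This proves the first convergence.

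For the second convergence, once $r$ is small enough (uniformly in $\xi$) the map $\varphi_{\xi,x_0,r}$ is $C^1$-close to the identity on any prescribed compact set, so the inverse function theorem furnishes $\varphi_{\xi,x_0,r}^{-1}$ on that compact set, and a standard argument shows $\varphi_{\xi,x_0,r}^{-1} \to \mathrm{id}$ in $C^\infty_{loc}$, uniformly in $\xi$. Composing with $\pi_\xi$ yields $P_{\xi,x_0,r} = \pi_\xi \circ \varphi_{\xi,x_0,r}^{-1} \to \pi_\xi$ in $C^\infty_{loc}$, uniformly in $\xi$.

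The main technical point I expect is the uniformity with respect to $\xi$: it requires tracking that the lifespan of the rescaled ODE (for $r$ small) and the constants in the smooth-dependence theorem can be bounded independently of $\xi \in \mathbb{S}^{n-1}$. This is however not a real obstacle, since $\mathbb{S}^{n-1}$ is compact and the vector field $F_r$ depends smoothly on all its arguments, so the relevant bounds are uniform.
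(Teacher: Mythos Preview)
Your proposal is correct and follows the natural route: rescale to obtain the Cauchy problem $\ddot v_r = F_{r,x_0}(v_r,\dot v_r)$, $v_r(0)=y$, $\dot v_r(0)=\xi$, use $F_{r,x_0}\to 0$ in $C^\infty_{loc}$ together with smooth dependence of ODE solutions to get $\varphi_{\xi,x_0,r}\to\mathrm{id}$, and then pass to inverses and compose with $\pi_\xi$. The paper does not give its own argument here but simply cites \cite[Lemmas~3.20 and~3.24]{AT_22-preprint}; your outline is exactly the kind of proof one would expect behind that citation, and the uniformity in $\xi$ via compactness of $\mathbb{S}^{n-1}$ is handled correctly.
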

\EEE



\begin{definition}
\label{d:exp}
 Let $F \in C^{\infty} (\R^{n} \times \R^{n}; \R^{n})$ satisfy~\eqref{hp:F}. For every $x_{0} \in \R^{n}$ we define, where it exists, the {\em exponential map} $\text{exp}_{x_0} \colon \mathbb{R}^n \to \mathbb{R}^n$ as $\text{exp}_{x_0}(\xi):= v_{\xi, x_{0}} (1)$, where $t\mapsto v_{\xi, x_{0}} (t)$ solves
\begin{equation}
\label{e:system-exponential}
\begin{cases}
\ddot{u}(t) = F(u(t),\dot{u}(t)), \ &t \in \mathbb{R}\,, \\
u(0)=x_0\,,&\\
\dot{u}(0)=\xi\,.&
\end{cases}
\end{equation}
\end{definition}

\RRR
\begin{remark}
Points on a manifold~${\rm M}$ are always denoted by~$p$ and~$q$, while points in~$\R^{n}$ will be denoted by~$x$ or~$x_{0}$. Hence, there is no confusion in the definition of the two exponential maps ${\rm exp}_{p} \colon {\rm T}_{p}{\rm M} \to \rm M$ and ${\rm exp}_{x_{0}} \colon \R^{n} \to \R^{n}$.
\end{remark}
\EEE

In the next definition we introduce the concept of injectivity radius.

\begin{definition}
For every $x_0 \in \mathbb{R}^n$ we define the injectivity radius $\text{inj}_{x_0}\in  [0, + \infty)$ as the supremum of all $r>0$ for which $\text{exp}_{x_0} \restr {\rm B}_{\overline{r}}(0)$ is well defined and $\text{exp}^{-1}_{x_0} \restr {\rm B}_{\overline{r}}(x_0)$ is a diffeomorphism with its image. 
\end{definition}

The well-posedness of~$\text{exp}_{x_0}$ in a small ball ${\rm B}_{r} (0)$ has been justified in~\cite[Lemma~3.13]{AT_22-preprint}. We recall here the statement, together with the asymptotic behavior of the exponential map.

\begin{lemma}
\label{l:exp}
Let $F \in C^{\infty}( \mathbb{R}^n \times \mathbb{R}^n ; \mathbb{R}^n)$ satisfy~\eqref{hp:F}.
For every~$x_0 \in \Omega$ we have $\emph{inj}_{x_0}>0$. Moreover, for $x \in {\rm B}_1(0)$ and $r >0$ we have
\begin{align}
\label{e:retr11}
& \frac{\text{\em exp}^{-1}_{x_0}(x_0+rx)}{r|x|} - \frac{x}{|x|} = o(r|x|)\,,
\\
&
\label{e:retr16}
   \frac{\text{\em exp}^{-1}_{x_0}(x_0+rx)}{|\text{\em exp}^{-1}_{x_0}(x_0+rx)|} = \frac{x}{|x|} + o(r|x|)\,.
\end{align}
\end{lemma}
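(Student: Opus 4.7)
The proof naturally splits into two parts: positivity of the injectivity radius, and the asymptotic expansions \eqref{e:retr11} and \eqref{e:retr16}.

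For the first part, the plan is to invoke the inverse function theorem after identifying the differential of $\mathrm{exp}_{x_0}$ at the origin. The key observation is the quadratic homogeneity of $F$ in the second variable, which follows directly from \eqref{hp:F}: for $\lambda > 0$ both $t \mapsto v_{\lambda \xi, x_0}(t)$ and $t \mapsto v_{\xi, x_0}(\lambda t)$ satisfy the initial value problem \eqref{e:system-exponential} with the same data, so they coincide. Hence $\mathrm{exp}_{x_0}(\lambda \xi) = v_{\xi, x_0}(\lambda)$; differentiating at $\lambda = 0$ shows that $d\,\mathrm{exp}_{x_0}(0)$ is the identity. Smoothness of $\mathrm{exp}_{x_0}$ near $0$ is provided by classical ODE regularity for the smooth field $F$, and the inverse function theorem then yields $\mathrm{inj}_{x_0} > 0$.

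For the asymptotic expansions, the plan is a second-order Taylor analysis combined with a bootstrap inversion. Using $\dot v_{\xi, x_0}(0) = \xi$ and $\ddot v_{\xi, x_0}(0) = F(x_0, \xi)$, and exploiting the quadratic homogeneity $F(x_0, \lambda \eta) = \lambda^2 F(x_0, \eta)$, expansion of $t \mapsto v_{\xi, x_0}(t)$ at $t = 0$ combined with the scaling identity of the previous paragraph yields, for small $\eta$,
\[
\mathrm{exp}_{x_0}(\eta) = x_0 + \eta + \tfrac{1}{2}F(x_0, \eta) + O(|\eta|^3).
\]
Setting $\eta := \mathrm{exp}^{-1}_{x_0}(x_0 + rx)$, a short bootstrap (first deduce $\eta = rx + O(r^2|x|^2)$ from dominance of the linear term, then substitute back into the Taylor identity) produces $\eta = rx - \tfrac{r^2}{2}F(x_0, x) + o(r^2|x|^2)$. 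Dividing by $r|x|$ gives \eqref{e:retr11}, while combining with $|\eta| = r|x| + O(r^2|x|^2)$ and renormalizing by $|\eta|$ produces \eqref{e:retr16}.

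The principal technical point I expect to be the obstacle is the uniformity of the various remainders in $x \in {\rm B}_1(0)$: the arguments above must yield error terms independent of $x$ in this ball, not merely pointwise. This reduces to uniform control of the second derivatives of $v_{\xi, x_0}$ for $\xi$ ranging in a compact set, together with uniformity of the inverse function theorem constants near $0$. Both follow from smoothness of $F$ combined with the compactness of $\mathbb{S}^{n-1}$ and the relative compactness of ${\rm B}_1(0)$.
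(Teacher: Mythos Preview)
Your proposal is correct and follows essentially the same approach as the paper: the scaling identity $v_{\lambda\xi,x_0}(t)=v_{\xi,x_0}(\lambda t)$ from the quadratic homogeneity of $F$, the computation $D\exp_{x_0}(0)=\mathrm{id}$, and the inverse function theorem for $\mathrm{inj}_{x_0}>0$, followed by a Taylor argument for the asymptotics. The only difference is one of detail: where the paper disposes of \eqref{e:retr11} in a single sentence (``$C^\infty$-regularity and differential equal to the identity'') and then obtains \eqref{e:retr16} from \eqref{e:retr11} by the algebraic manipulation $\frac{\eta}{|\eta|}=\big(\frac{x}{|x|}+o(r|x|)\big)\big(1+o(r|x|)\big)$, you carry out the second-order expansion and bootstrap inversion explicitly---this is more informative but not a different strategy.
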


\begin{proof}
 By the $2$-homogeneity of $F(x,\cdot)$ we get that 
\begin{equation}
\label{e:retr17}
v_{s\xi, x_{0}} (t) = v_{\xi, x_{0}} (st)\qquad  \text{for $s,t \in [0, + \infty)$,  $\xi \in \mathbb{R}^n$.}
\end{equation}
Hence, by the local well-posedness of ODEs we have that there exists~$r>0$ such that~$\text{exp}_{x_{0}}$ is well-defined on~${\rm B}_{r}(0)$. 

For every $i \in \{1,\dotsc,n \}$ we have that
\[
D\exp_{x_0}(0) e_i = \lim_{t \to 0^+} \frac{v_{te_{i}, x_{0}} (1) - v_{0, x_{0}} (1)}{t} = \lim_{t \to 0^+} \frac{v_{e_{i}, x_{0}} (t) - v_{e_{i}, x_{0}} (0) }{t} = \dot{v}_{e_{i}, x_{0}} ( 0) = e_i\,.
\]
Thus, the differential of $\text{exp}_{x_0}$ at $0$ is the identity. Applying the implicit function theorem, we find a sufficiently small $\tilde r>0$ such that $\text{exp}^{-1}_{x_0} \restr {\rm B}_{\tilde r}(x_0)$ is a diffeomorphism with its image. We conclude by setting $\overline{r} := \min\{ r, \tilde{r}\}$.

Since $\text{exp}_{x_{0}}$ is $C^\infty$-regular and its differential at $0$ is the identity, we get~\eqref{e:retr11}. As a consequence, for every $x \in {\rm B}_1(0)$ and $r>0$ 
\begin{equation*}
 \frac{\text{exp}^{-1}_{x_0}(x_0+rx)}{|\text{exp}^{-1}_{x_0}(x_0+rx)|} - \frac{x}{|x|} =\bigg(\frac{x}{|x|} +o(r|x|) \bigg)( 1+  o(r|x|)) -\frac{x}{|x|}=o(r|x|)\,,
\end{equation*}
which is exactly~\eqref{e:retr16}.
\end{proof}

We introduce an auxiliary map~$\phi_{x_{0}}$ for $x_{0} \in \Om$.

\begin{definition}
 Let $F \in C^{\infty}( \mathbb{R}^n \times \mathbb{R}^n ; \mathbb{R}^n)$ satisfy~\eqref{hp:F}, let $x_{0} \in \Om$, let $(P_{\xi, x_{0}})_{\xi \in \mathbb{S}^{n-1}}$ be the family introduced in Definition~\ref{d:P-xi}, and let~$R_{0}$ be as in Theorem~\ref{p:curvpro}. Thanks to Lemma~\ref{l:exp} we may define for $0 < \overline{r} < \text{inj}_{x_0}$ the map $\phi_{x_{0}}\colon \mathrm{B}_{\overline{r}}(x_0) \setminus\{x_{0}\} \to \mathbb{S}^{n-1}$ as
\begin{equation}
\label{e:retr12}
\phi_{x_{0}}(x):=  \frac{\text{exp}^{-1}_{x_0}(x)}{|\text{exp}^{-1}_{x_0}(x)|}  \qquad\text{for every }x \in \mathrm{B}_{\overline{r}}(x_0) \setminus \{x_0\}\,.
\end{equation}
\end{definition}

The following proposition holds.

\begin{proposition}
\label{p:retr}
 Let $F \in C^{\infty}( \mathbb{R}^n \times \mathbb{R}^n ; \mathbb{R}^n)$ satisfy~\eqref{hp:F}, let $x_{0} \in \Om$, let $(P_{\xi, x_{0}})_{\xi \in \mathbb{S}^{n-1}}$ be the family be the family introduced in Definition~\ref{d:P-xi} \RRR parametrized by~$\varphi_{x_{0}}$, \EEE and let $R_{0}$ be as in Theorem~\ref{p:curvpro}. For $0 < \overline{r} < \emph{inj}_{x_0}$, the function $\phi_{x_{0}}  \in C^{1}(  \mathrm{B}_{\overline{r}}(x_0) \setminus \{x_0\} ; \mathbb{S}^{n-1})$ satisfies
\begin{align}
\label{e:chi1.1}
     &P_{\xi}(x)=P_{\xi}(x_0) \ \ \text{if and only if $\xi = \phi_{x_0}(x)$ for every $x \in \mathrm{B}_{\overline{r}}(x_0)\setminus \{x_0\}$.}\\
     \label{e:retr2}
     &  \frac{C'_{x_0}}{|x-x_0|^{n-1}}\leq |\emph{J}\phi_{x_{0}}(x)| \leq \frac{C_{x_0}}{|x-x_0|^{n-1}} \qquad \text{for every $x \in \mathrm{B}_{\overline{r}}(x_0) \setminus \{x_0\}$},
\end{align} 
for some constant $C_{x_0}, C'_{x_0} >0$. 
\end{proposition}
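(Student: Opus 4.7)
My plan is to deduce \eqref{e:chi1.1} by identifying the $\xi$-fiber of $P_{\xi, x_0}$ through $x_0$ with the curve $t \mapsto \text{exp}_{x_0}(t\xi)$, and then to obtain \eqref{e:retr2} by factoring $\phi_{x_0}$ through the standard radial projection on $\mathbb{R}^n$.

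For \eqref{e:chi1.1}, I would first note that $\varphi_{\xi, x_0}(0) = u_{\xi, 0}(0) = x_0$, hence $\varphi_{\xi, x_0}^{-1}(x_0) = 0$ and $P_{\xi, x_0}(x_0) = \pi_\xi(0) = 0$. Therefore $P_{\xi, x_0}(x) = P_{\xi, x_0}(x_0)$ is equivalent to $\varphi_{\xi, x_0}^{-1}(x) \in \text{span}(\xi)$, i.e., $x = \varphi_{\xi, x_0}(t\xi) = u_{\xi, 0}(t)$ for some $t$. The Cauchy problem defining $u_{\xi, 0}$ coincides with \eqref{e:system-exponential}, so $u_{\xi, 0}(t) = v_{\xi, x_0}(t)$, and by the homogeneity identity \eqref{e:retr17} this equals $v_{t\xi, x_0}(1) = \text{exp}_{x_0}(t\xi)$. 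Inverting the exponential yields $\text{exp}^{-1}_{x_0}(x) = t\xi$, so $\phi_{x_0}(x) = \text{sign}(t)\,\xi$; the converse direction is immediate by reading the same chain of identities backwards.

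For \eqref{e:retr2}, I would factor $\phi_{x_0} = \pi \circ \text{exp}^{-1}_{x_0}$, where $\pi(y) := y/|y|$. Since $\bar r < \text{inj}_{x_0}$, the map $\text{exp}^{-1}_{x_0}$ is a $C^\infty$ diffeomorphism of $\mathrm{B}_{\bar r}(x_0)$ onto its image, and $D\text{exp}_{x_0}(0) = I$ as derived in the proof of Lemma~\ref{l:exp}. By continuity, $|\det D\text{exp}^{-1}_{x_0}(x)|$ is then uniformly bounded above and below by positive constants on any compactly contained ball around $x_0$. The chain rule for Jacobians of compositions with a diffeomorphism as inner map gives
\[
|J\phi_{x_0}(x)| = |J\pi(\text{exp}^{-1}_{x_0}(x))| \cdot |\det D\text{exp}^{-1}_{x_0}(x)|,
\]
while a direct computation in polar coordinates yields $|J\pi(y)| = |y|^{-(n-1)}$. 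Combining this with \eqref{e:retr11}, which gives $|\text{exp}^{-1}_{x_0}(x)|$ comparable to $|x-x_0|$ on the relevant ball, produces the two-sided bound \eqref{e:retr2}.

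The $C^1$ regularity of $\phi_{x_0}$ is inherited from the $C^\infty$ regularity of $\text{exp}^{-1}_{x_0}$ and the smoothness of $\pi$ on $\mathbb{R}^n \setminus \{0\}$. The main technical point I expect is stating the chain rule for Jacobians correctly when the target has codimension one; once this is pinned down, the remaining ingredients---Lemma~\ref{l:exp}, the explicit form of $|J\pi|$, and the boundedness of $|\det D\text{exp}^{-1}_{x_0}|$---are all elementary.
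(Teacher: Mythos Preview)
Your direct argument is sound and more self-contained than the paper's, which simply cites \cite[Proposition~3.15]{AT_22-preprint} together with Lemma~\ref{l:3.20-24}. Two points deserve attention.

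For \eqref{e:chi1.1}, your computation is correct and in fact reveals that if $P_{\xi}(x)=P_{\xi}(x_0)$ then $\phi_{x_0}(x)=\operatorname{sign}(t)\,\xi$, so strictly speaking the ``only if'' direction determines $\xi$ only up to sign (indeed $P_{\xi}=P_{-\xi}$ since $F(x,-v)=F(x,v)$). This is an imprecision in the stated equivalence, not in your reasoning; the direction actually used later (Definition~\ref{d:chi1.1.1}) is the ``if'' direction, which you prove cleanly.

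For \eqref{e:retr2}, your factorisation $\phi_{x_0}=\pi\circ\mathrm{exp}_{x_0}^{-1}$ with $|J\pi(y)|=|y|^{1-n}$ is the right idea, but the displayed identity $|J\phi_{x_0}(x)| = |J\pi(g(x))|\cdot|\det Dg(x)|$ (with $g=\mathrm{exp}_{x_0}^{-1}$) is \emph{false} in general for compositions where the outer map drops dimension: take $n=2$, $D\pi=(1,0)$ and $Dg=\mathrm{diag}(1,2)$ to see the two sides differ. What \emph{is} true, and suffices here, is a two-sided comparability: writing $L=D\pi(g(x))$ and $M=Dg(x)$, one has $a^{2}\,LL^{T}\le L\,MM^{T}L^{T}\le b^{2}\,LL^{T}$ whenever the singular values of $M$ lie in $[a,b]$, and taking determinants gives
\[
a^{\,n-1}\,|J\pi(g(x))|\;\le\;|J\phi_{x_0}(x)|\;\le\;b^{\,n-1}\,|J\pi(g(x))|\,.
\]
Since $D\mathrm{exp}_{x_0}^{-1}(x_0)=I$ and $\overline r<\mathrm{inj}_{x_0}$, the singular values of $Dg$ are uniformly bounded away from $0$ and $\infty$ on $\overline{B_{\overline r}(x_0)}$; combined with $|g(x)|\asymp|x-x_0|$ from \eqref{e:retr11}, this yields \eqref{e:retr2}. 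You correctly flagged this chain-rule step as the delicate one---the fix is exactly the inequality above, not a multiplicative identity.
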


\begin{proof}
The result is a byproduct of~\cite[Proposition~3.15]{AT_22-preprint} \RRR and of Lemma~\ref{l:3.20-24}. \EEE
\end{proof}

We conclude by introducing the maps $\chi_{x_{0}}$, and~${\rm d} (\cdot, x_{0})$ for $x_{0} \in \Om$ and by collecting their convergence properties.

\begin{definition}
\label{d:chi1.1.1}
 Let $F \in C^{\infty}( \R^{n} \times \R^{n} ; \R^{n})$ satisfy~\eqref{hp:F},  let $x_{0} \in \Om$, let~$(P_{\xi, x_{0}})_{\xi \in \mathbb{S}^{n-1}}$ be the family introduced in Definition~\ref{d:P-xi}, and let~$R_{0}$ be as in Theorem~\ref{p:curvpro}. Given $0 < \overline{r} < \text{inj}_{x_0}$ we define the vectorfield $\chi_{x_{0}} \colon {\rm B}_{\overline{r}}(x_0) \setminus \{x_0\} \to \mathbb{S}^{n-1}$ by
\begin{equation}
\label{e:chi1.1.1}
    \chi_{x_{0}} (x)= \xi_\varphi(x) 
     \ \ \text{with} \ \ \xi = \phi_{x_{0}}(x).
\end{equation}
Moreover, we define the function $\text{d}(\cdot,x_0) \colon {\rm B}_{\overline{r}}(x_0) \to \mathbb{R}$ by
\begin{equation}
    \label{e:chi1.2}
    \text{d}(x,x_0):= |\text{exp}_{x_0}^{-1}(x)|\,.
\end{equation}
\end{definition}

\begin{proposition}
Let $x_0 \in \Omega$,  let~$(P_{\xi, x_{0}})_{\xi \in \mathbb{S}^{n-1}}$ be the family of curvilinear projections defined in Definition~\ref{d:P-xi}, and let $R_0>0$ be given by Theorem \ref{p:curvpro}. Then, the following convergences hold:
\begin{align}
    \label{e:chi1}
   & \phi_{x_{0}} (x_0 +r\xi) \to \xi \qquad \text{in } C^\infty(\mathbb{S}^{n-1};\mathbb{S}^{n-1}), \  \text{as } r \searrow 0\,,
   \\
    \label{e:chi2}
    &\frac{x}{r|x|^2}-\frac{\chi_{x_{0}} (x_0+rx)}{r|x|} \to 0 \qquad \text{uniformly in }  {\rm B}_1(0) \setminus \{0\}, \ \text{as } r \searrow 0\,, 
    \\
    \label{e:chi3}
    &\frac{{\rm d}(x_0+rx,x_0)}{r|x|}\to 1  \qquad  \text{uniformly in }  {\rm B}_1(0) \setminus \{0\}, \ \text{as } r \searrow 0\,.
\end{align}
\end{proposition}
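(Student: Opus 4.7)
My plan is to establish the three convergences \eqref{e:chi1}, \eqref{e:chi3}, \eqref{e:chi2} by combining the Taylor expansion of the $F$-exponential map $\text{exp}_{x_{0}}$ provided by Lemma~\ref{l:exp} with the $C^{\infty}_{loc}$-convergence of the rescaled parametrization $\varphi_{\xi, x_{0}, r}$ to the identity given by Lemma~\ref{l:3.20-24} (uniformly in $\xi \in \mathbb{S}^{n-1}$).

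To prove \eqref{e:chi1}, I would observe that, since $D\text{exp}_{x_{0}}(0) = I$ (as established inside Lemma~\ref{l:exp}), the map $(r, \xi) \mapsto \text{exp}_{x_{0}}^{-1}(x_{0} + r\xi)$ extends jointly smoothly to $[0, r_{0}) \times \mathbb{S}^{n-1}$ with value $0$ and $r$-derivative equal to $\xi$ at $r = 0$. Writing $\text{exp}_{x_{0}}^{-1}(x_{0} + r\xi) = r\xi + r^{2} R(r, \xi)$ with $R$ jointly smooth and composing with the smooth normalization $v \mapsto v/|v|$ (well-defined on a fixed neighborhood of $\mathbb{S}^{n-1}$ for small $r$) yields the $C^{\infty}(\mathbb{S}^{n-1};\mathbb{S}^{n-1})$-convergence. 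The same Taylor expansion gives $|\text{exp}_{x_{0}}^{-1}(x_{0} + rx)|^{2} = r^{2}|x|^{2}(1 + O(r))$ uniformly for $x \in \overline{{\rm B}_{1}(0)}$, which takes care of \eqref{e:chi3}.

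The main obstacle is \eqref{e:chi2}. Here my plan is to first apply \eqref{e:chi1.1} with $\xi := \phi_{x_{0}}(x_{0} + rx)$, so that $P_{\xi, x_{0}}(x_{0} + rx) = 0$ and thus $\chi_{x_{0}}(x_{0} + rx) = \dot{\varphi}_{\xi, x_{0}}(\text{exp}_{x_{0}}^{-1}(x_{0}+rx))$. Then I would pass to the rescaled parametrization $\varphi_{\xi, x_{0}, r}$ from Definition~\ref{d:varphi-xi-r}: by Lemma~\ref{l:3.20-24} the rescaled velocity $\dot\varphi_{\xi, x_{0}, r}$ converges in $C^{\infty}_{loc}$ to the identity velocity field, uniformly in $\xi$. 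Combining this with \eqref{e:chi1} (which provides $\xi \to x/|x|$) and the leading-order expansion \eqref{e:retr11}, the difference in \eqref{e:chi2} reduces after cancellation to a remainder of the form $(r|x|)^{-1} \cdot o(r|x|)$.

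The delicate point, and what I expect to be the hardest step, is ensuring uniformity as $x$ approaches the origin inside ${\rm B}_{1}(0)\setminus\{0\}$: the factor $1/|x|$ created by dividing by $r|x|$ threatens the uniformity, but the joint smoothness of $(r, \xi, z) \mapsto \varphi_{\xi, x_{0}, r}(z)$ up to $r = 0$ (where by Lemma~\ref{l:3.20-24} it reduces to the identity in $z$) ensures that the remainder actually carries a compensating factor of $|x|$, restoring uniform convergence to $0$ on the whole of ${\rm B}_{1}(0)\setminus\{0\}$.
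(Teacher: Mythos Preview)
Your proposal is correct and follows essentially the same route as the paper. For \eqref{e:chi1} and \eqref{e:chi3} both you and the paper use the smooth Taylor expansion of $\exp_{x_0}^{-1}$ coming from $D\exp_{x_0}(0)=I$ (the paper phrases this as the $C^\infty_{loc}$-convergence \eqref{e:retr5.1} and then invokes \eqref{e:retr11}); for \eqref{e:chi2} the paper's key intermediate step \eqref{e:retr7.1}, namely $\xi_\varphi(x)=\xi+o(|x-x_0|)$ uniformly in $\xi$, is obtained exactly by your mechanism---a first-order Taylor expansion of $\dot\varphi_{\xi,x_0}$ at the origin combined with the uniform-in-$\xi$ convergence of the rescaled parametrization from Lemma~\ref{l:3.20-24}---and then one concludes via \eqref{e:retr16}, just as you outline. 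The only organizational difference is that you specialize immediately to $\xi=\phi_{x_0}(x_0+rx)$ (so that $P_{\xi,x_0}(x_0+rx)=0$ and the argument of $\dot\varphi_{\xi,x_0}$ becomes $\exp_{x_0}^{-1}(x_0+rx)$), whereas the paper first states \eqref{e:retr7.1} for arbitrary $\xi$ and only afterwards plugs in $\xi=\phi_{x_0}(x_0+rx)$; this is a matter of presentation, not of substance.
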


\begin{proof}
Since~$x_{0}$ is fixed, we drop the index~$x_{0}$ in the functions~$\phi_{x_{0}}$ and~$\chi_{x_{0}}$. Definition~\ref{d:exp} of the map $\text{exp}_{x_{0}}$,  
 the uniqueness property of ODEs, and the fact that $F(x,\cdot)$ is 2-homogeneous imply that the solution~$u$ of \eqref{e:system-exponential} with initial datum $\xi := x/|x|$ satisfies $u(|x|)= \text{exp}_{x_{0}} (x)$. From Definition~\ref{d:curvpro} we deduce that 
 $\varphi_{x/|x|,x_0}(x)=u(|x|)= \text{exp}_{x_{0}} (x)$. Thus, we deduce from \cite[Lemma 3.20]{AT_22-preprint} that
\begin{equation}
\label{e:retr4.1}
\frac{\text{exp}_{x_0}(r\, \cdot) -x_0}{r} 
\to id \qquad \text{in }  C^\infty_{loc} (\R^{n} \setminus \{0\}; \R^{n}),\ \text{as } r \searrow 0\,.
\end{equation}
The convergence in \eqref{e:retr4.1} allows us to pass to the inverse maps, and thus write
\begin{equation}
\label{e:retr5.1}
\frac{\text{exp}^{-1}_{x_0}(x_0+r \, \cdot ) }{r} \to id\qquad \text{in }  C^\infty_{loc} (\R^{n} \setminus \{0\}; \R^{n}),\ \text{as } r \searrow 0\,.
\end{equation}
By definition of~$\phi$ in~\eqref{e:retr12}, \eqref{e:retr5.1} gives exactly~\eqref{e:chi1}.

In order to show \eqref{e:chi2}, let us recall the notation $\xi_{\varphi} (x) = \dot{\varphi}_{\xi,x_0}(P_{\xi,x_0}(x) + (\varphi^{-1}_{\xi,x_0}(x)\cdot \xi)\xi)$ and let us suppose for a moment that we already know that
\begin{equation}
\label{e:retr7.1}
\xi_{\varphi} (x)= \xi + o(|x-x_0|) \qquad \text{for every } \xi \in \mathbb{S}^{n-1}.
\end{equation}
Then, we can write for every $x \in {\rm B}_1(0) \setminus \{0\}$ 
\[
\begin{split}
\frac{x}{r|x|^2}-\frac{\chi(x_0+rx)}{r|x|} &= \frac{x}{r|x|^2}-\frac{\phi(x_0+rx) + o(r|x|)}{r|x|}\\
&= \frac{x}{r|x|^2}-\frac{\exp^{-1}_{x_0}(x_0+rx) }{r|x||\exp^{-1}_{x_0}(x_0+rx)|} +\frac{o(r|x|)}{r|x|},
\end{split}
\]
and from \eqref{e:retr16} we immediately deduce \eqref{e:chi2}. 

It remains to prove \eqref{e:retr7.1}. \RRR In view of Lemma~\ref{l:3.20-24} \EEE we have that for $\xi \in \mathbb{S}^{n-1}$
\begin{equation}
    \label{e:retr9.1}
    \frac{\varphi_{\xi,x_0}(ry+ r t\xi)-x_0}{r} \to y+t\xi \qquad \text{in $C^\infty_{loc} ( \xi^\bot \times \mathbb{R}; \R^{n})$ as $r \searrow 0$.}
\end{equation}
In particular, the convergence in~\eqref{e:retr9.1} is uniform w.r.t.~$\xi \in \mathbb{S}^{n-1}$. The convergence in~\eqref{e:retr9.1} implies that
\begin{align}
    \label{e:retr10.1}
   &  \dot{\varphi}_{\xi,x_0}(ry+ rt \xi) 
    \to \xi \qquad \text{in } L^\infty_{loc} ( \mathbb{S}^{n-1} \times  \xi^\bot \times \mathbb{R}; \R^{n})\,,
\\
&
    \label{e:retr11.1}
    \partial^\beta_{y_i}\partial^\alpha_t\dot{\varphi}_{\xi,x_0}(ry+ r t \xi) \to 0 \qquad  \text{in } L^\infty_{loc}( \mathbb{S}^{n-1} \times \xi^\bot \times \mathbb{R}; \R^{n})\,,
\end{align}
where $y_i:= y \cdot \eta_i$ and $\{\eta_1,\dotsc,\eta_{n-1}\}$ is an orthonormal basis of $\xi^\bot$ and $\alpha,\beta$ are any positive integers. Again, the convergences in~\eqref{e:retr10.1}--\eqref{e:retr11.1} are uniform w.r.t.~$\xi \in \mathbb{S}^{n-1}$. We can thus fix $\xi \in \mathbb{S}^{n-1}$ and write a Taylor expansion of the form
\begin{align}
\label{e:retr13.1}
    \dot{\varphi}_{\xi,x_0}(ry + r t\xi) = & \ \xi +  \RRR \partial_t\dot{\varphi}_{\xi,x_0}(ry+rt \xi) rt
    \\
    &
    + \sum_{i=1}^{n-1} \partial_{y_i}\dot{\varphi}_{\xi,x_0}(ry+ rt \xi) ry_i + o_\xi(r |(y,t)|)\,, \nonumber
\end{align}
where we used also $ \dot{\varphi}_{\xi,x_0}( 0 )=\xi $. Notice that \RRR $o_\xi( r |(y,t)|)$ \EEE can be taken independent of $\xi \in \mathbb{S}^{n-1}$ because of the uniform convergences \eqref{e:retr10.1}--\eqref{e:retr11.1}. 

We recall that for every $x \in {\rm B}_1(0)$ and $\xi \in \mathbb{S}^{n-1}$
\[
P_{\xi,x_0}(x_0 + rx) =rP_{\xi,x_0,r}(x) \ \ \text{ and } \ \ \varphi^{-1}_{\xi,x_0}(x_0 + rx) =r\varphi^{-1}_{\xi,x_0,r}(x)\,.
\]
 Replacing in \eqref{e:retr13.1} $y, t$ with $P_{\xi,x_0,r}(x),\varphi^{-1}_{\xi,x_0,r}(x) \cdot \xi$, respectively, and using convergence~\eqref{e:retr11.1} we obtain
\begin{equation}
\label{e:retr14.1}
    \xi_{\varphi} (x_0+rx) - \xi
    = o( \RRR r \EEE |(P_{\xi,x_0,r}(x),\varphi^{-1}_{\xi,x_0,r}(x) \cdot \xi)|) \quad \text{for every $ (x, \xi) \in {\rm B}_1(0) \times \mathbb{S}^{n-1}$.} 
\end{equation}
By Lemma~\ref{l:3.20-24} we have that
\begin{align*}
& P_{\xi,x_0,r} \to \pi_\xi, \qquad \varphi^{-1}_{\xi,x_0,r} \to id  \qquad \text{in } C^\infty_{loc} (\R^{n}; \R^{n})\,,
\end{align*}
as $r \searrow 0$ uniformly w.r.t.~$\xi \in \mathbb{S}^{n-1}$. \RRR Hence, \EEE we can rewrite~\eqref{e:retr14.1} as
\begin{equation*}
    \xi_{\varphi} (x_0+rx) - \xi
    = o(\RRR r \EEE |x|) \qquad \text{for } \xi \in \mathbb{S}^{n-1}\,,
\end{equation*}
which implies~\eqref{e:retr7.1} and completes the proof of~\eqref{e:chi2}.

Finally, \eqref{e:chi3} follows directly from \eqref{e:retr11}.
\end{proof}

\section{Structure properties of $GBD_F(\Omega)$ and $GBD(\rm{M})$}
\label{s:structure}

\RRR
In this section we study the structure of functions in~$GBD({\rm M})$ and in~$GBD_{F} (\Om)$. In particular, in Sections~\ref{s:jump-u}--\ref{s:approximate-sym-gradient} we  show that under the assumptions~\eqref{hp:F} and~\eqref{hp:F2} on~$F$ the jump set of a function $u \in GBD_F(\Omega)$ can be sliced into the $0$-dimensional jump set of suitable one dimensional slices of~$u$ (see Theorem~\ref{p:euju}). In Theorem~\ref{t:apsym} we show that if~$F$ satisfies~\eqref{hp:F}, a function $u \in GBD_{F} (\Om)$ admits an approximate symmetric gradient. Relying on such results, we show in Sections~\ref{s:jump-M}--\ref{s:appr-sym-M} that $\omega \in GBD({\rm M})$ satisfies the same properties on the Riemannian manifold~$({\rm M}, g)$.
\EEE

\subsection{Rectifiability of the jump and its one dimensional slices in $GBD_{F}(\Om)$} 
\label{s:jump-u}

Throughout this section we assume that $\Omega$ is an open subset of $\mathbb{R}^n$ and we fix $F \in C^{\infty} ( \R^{n} \times \R^{n}; \R^{n})$ fulfilling~\eqref{hp:F} and \eqref{hp:F2}. Furthermore, we rely on the notation introduced in Section~\ref{s:curvilinear}.
Here we present a fundamental property of the jump set (cf.~\cite{del}).

\begin{theorem}
\label{t:delnin}
Let $u \colon \Omega \to \mathbb{R}^n$ be measurable. Then,~$J_u$ is countably $(n-1)$-rectifiable.
\end{theorem}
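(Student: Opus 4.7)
The plan is to follow Del Nin's strategy (from the cited reference): decompose $J_u$ countably according to the approximate one-sided limit values, reduce to a density statement about two disjoint measurable sets, and apply a rectifiability criterion of Preiss/Marstrand-Mattila type.

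First I would write $J_u$ as a countable union indexed by pairs $(a,b) \in \mathbb{Q}^n \times \mathbb{Q}^n$ with $a \neq b$. Namely, setting $\epsilon_{a,b} := |a-b|/10$, let
\[
J_u^{a,b} := \{ x \in J_u : |u^+(x)-a| < \epsilon_{a,b}, \ |u^-(x)-b| < \epsilon_{a,b} \}.
\]
Since $u^+(x) \neq u^-(x)$ at every $x \in J_u$, by density of $\mathbb{Q}^n$ we have $J_u = \bigcup_{a,b} J_u^{a,b}$, so it suffices to prove each $J_u^{a,b}$ is $(n-1)$-rectifiable.

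Next, for fixed $(a,b)$ I would introduce the disjoint measurable sets
\[
E := \{ x \in \Omega : |u(x)-a| < 2\epsilon_{a,b} \}, \qquad F := \{ x \in \Omega : |u(x)-b| < 2\epsilon_{a,b} \},
\]
which are disjoint because $4\epsilon_{a,b} < |a-b|$. The definition of $J_u$ via approximate one-sided limits implies that at each $x \in J_u^{a,b}$ the set $E$ has approximate density $1$ in the half-space $H^+(x) = \{(z-x)\cdot \nu(x) > 0\}$ and $F$ has approximate density $1$ in $H^-(x)$. Equivalently, the symmetric difference $E \triangle H^+(x)$ has density $0$ at $x$ (and similarly for $F$ and $H^-(x)$). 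In particular $E$ has one-sided lower density at least $1/2$ at every $x \in J_u^{a,b}$ coming from one side, with a measurably varying approximate tangent hyperplane $\nu(x)^\bot$.

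The final and hardest step is to apply the rectifiability criterion: any Borel set of points at which a fixed measurable set has a (measurable) approximate tangent half-space in the above sense is countably $(n-1)$-rectifiable. This is the content of Del Nin's main theorem in \cite{del}; conceptually, it uses a blow-up argument to show that $\mathcal{H}^{n-1} \restr J_u^{a,b}$ (or an equivalent surrogate Radon measure obtained by intersecting $E$ with thin slabs transverse to $\nu$) admits a unique $(n-1)$-dimensional approximate tangent plane almost everywhere, and then invokes the Marstrand-Mattila rectifiability theorem. The main obstacle, and the reason this cannot be done by the classical $BV$/$BD$ techniques, is the absence of any a priori bound on the size of $J_u$ or on oscillations of $u$, so all density information must be extracted from the mere existence of the one-sided approximate limits; this is exactly the gap that \cite{del} closes and which we here invoke as a black box.
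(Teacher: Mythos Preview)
The paper does not actually prove this theorem: it is stated with the reference ``cf.~\cite{del}'' and used as a black box throughout (e.g.\ in Theorem~\ref{p:euju} and Theorem~\ref{t:jumpslicingform}). So there is no ``paper's own proof'' to compare against; the paper simply imports the result.

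Your proposal goes beyond the paper by sketching the argument of the cited reference, and the outline you give is indeed Del Nin's strategy: the countable decomposition of $J_u$ into pieces $J_u^{a,b}$ indexed by rational pairs, the reduction to the statement that each $J_u^{a,b}$ lies in the measure-theoretic boundary of a set $E$ at which $E$ has an approximate tangent half-space, and then the invocation of a blow-up/Marstrand--Mattila type rectifiability criterion. One small comment: in the last paragraph you correctly identify that the hard work is entirely inside the rectifiability criterion you are quoting as a black box, so your write-up is really a reduction rather than a self-contained proof --- which is fine, and is exactly the level at which the paper itself operates. If you wanted to make the sketch tighter you could be more precise about which object the criterion is applied to (the set $E$ of finite perimeter is not assumed, so one works directly with the density condition on $E$ at points of $J_u^{a,b}$), but as a summary of \cite{del} your proposal is accurate.
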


\RRR We define the directional jump set of a measurable function. \EEE

\begin{definition}
\label{d:Au}
Let $(P_{\xi})_{\xi \in \mathbb{S}^{n-1}}$ be a family of curvilinear projections on~$\Om$ and let $u \colon \Omega \to \mathbb{R}^m$. Given $\xi \in \mathbb{S}^{n-1}$ we define the {\it directional jump set}~as $J_{\hat u_\xi}:=\{x \in \Omega :\,  t^{\xi}_{x} \in J_{\hat{u}^\xi_{P_\xi(x)}}\}$. We further define $A_{\hat{u}} := \{ (x , \xi ) \in \Om \times \mathbb{S}^{n-1}: \, x \in J_{\hat{u}_{\xi}}\}$. \EEE
\end{definition}

\RRR We now introduce a family of Borel regular measures $\{\eta_{\xi}\}_{\xi \in \mathbb{S}^{n-1}}$ depending on a direction~$\xi$ and on the jump set of~$\hat{u}^{\xi}_{y}$ defined w.r.t.~a family of curvilinear projection~$\{P_{\xi}\}_{\xi \in \mathbb{S}^{n-1}}$. \EEE

\begin{definition}
\label{d:defiu}
Let $u \in GBD_{F} (\Om)$, let $(P_{\xi})_{\xi \in \mathbb{S}^{n-1}}$ be a family of curvilinear projections on an open subset~$U$ of~$\Omega$, and let $p \in [1, +\infty]$. For every $\xi \in \mathbb{S}^{n-1}$ consider the Borel regular measure $\eta_\xi$ of $\mathbb{R}^n$ given by
\begin{align}
    \label{e:defeta1}
    \eta_\xi(B) & := \int_{\xi^\bot} \sum_{t \in (B \cap J_{\hat u_\xi})^\xi_y} \big( |[\hat{u}^\xi_y(t)]| \wedge 1 \big) \, \di \mathcal{H}^{n-1}(y) \qquad  B \in \mathcal{B} ( \mathbb{R}^n) \,,
    \\
    \label{e:defeta2}
    \eta_\xi(E) & := \inf \, \{\eta_\xi(B) : \, E \subseteq B, \ B \in \mathcal{B} (\Om)\}\,.
\end{align}
We recall that the integral in~\eqref{e:defeta1} is well defined thanks to~\cite[Lemma~4.5]{AT_22-preprint}. For $B \in \mathcal{B} (\Om)$ and~$\xi \in \mathbb{S}^{n-1}$, we further set $f_{B} (\xi) := \eta_{\xi} (B)$ and $\zeta_{p} (B) := \| f_{B}\|_{L^{p} (\mathbb{S}^{n-1})}$. Via the classical Carath\'eodory's construction we define the measure
\begin{equation*}
\mathscr{I}_{u,p} (E) := \sup_{\delta>0} \, \inf_{G_{\delta}} \sum_{B \in G_{\delta}} \zeta_{p} (B) \qquad \text{for $E \subseteq \Om$,}
\end{equation*}
where~$G_{\delta}$ is the family of all countable Borel coverings of $E$ made of sets having diameter less than or equal to~$\delta$.
\end{definition}

\begin{definition}
\label{d:defI2}
In the same setting of Definiton~\ref{d:defiu}, we define
\begin{align*}
\hat{\zeta}(A)& := \int_{\mathbb{S}^{n-1}} \eta_{\xi}(A_{\xi}) \, \di \mathcal{H}^{n-1}(\xi) \qquad \text{for every $A \in \mathcal{B} (\Om\times \mathbb{S}^{n-1})$}\,,\\
\hat{\mathscr{I}}_{u} (\RRR D \EEE )&:= \sup_{\delta>0} \, \inf_{G_{\delta}} \sum_{B \in G_{\delta}} \hat\zeta (B) \qquad \text{for $\RRR D \EEE \subseteq \Om \times \mathbb{S}^{n-1}$,}
\end{align*}
where $G'_{\delta}$  is the family of all countable Borel coverings of $F$ made of sets having diameter less than or equal to~$\delta$.
\end{definition}

\RRR
In particular, we have the following representation for~$\mathscr{I}_{u,p}$ and $\hat{\mathscr{I}}_{u}$ (see also \cite[Proposition~2.9]{AT_22-preprint}).

\begin{proposition}
\label{p:coincidence}
In the setting of Definitions~\ref{d:defiu} and~\ref{d:defI2}, the measures $\mathscr{I}_{u,1}$ and $\hat{\mathscr{I}}_{u}$ satisfy
\begin{align*}
    \mathscr{I}_{u, 1} (E) &= \inf_{\substack{E \subseteq B \\ B \in \mathcal{B}(\Om)}} \int_{\mathbb{S}^{n-1}} \eta_\xi(B) \, \di \mathcal{H}^{n-1}(\xi) \qquad \text{for every  $E \subseteq \Omega$\,,} \\
    \hat{\mathscr{I}}_{u}( D ) &= \inf_{\substack{F \subseteq A \\ A \in \mathcal{B} ( \Om \times \mathbb{S}^{n-1}) }} \int_{\mathbb{S}^{n-1}} \eta_\xi(A_\xi) \, \di \mathcal{H}^{n-1}(\xi) \qquad  \text{for every  $ D  \subseteq \Omega \times \mathbb{S}^{n-1}$}.
\end{align*}
\end{proposition}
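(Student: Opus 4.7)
The plan is to reduce both identities to the classical fact that, for a Borel regular measure $\mu$ on a separable metric space, the Carath\'eodory construction obtained by imposing diameter-$\delta$ restrictions on covers and letting $\delta \to 0^+$ recovers the outer measure $E \mapsto \inf\{ \mu(B) : E \subseteq B, \, B \in \mathcal{B}\}$. The key observation, very specific to the exponent $p=1$, is that $\zeta_{1}$ and $\hat\zeta$ are already genuine countably additive Borel measures on $\Om$ and $\Om \times \mathbb{S}^{n-1}$ respectively; thus the $\sup_{\delta>0}\inf_{G_{\delta}}$ in Definitions~\ref{d:defiu} and~\ref{d:defI2} is redundant.

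First I would verify countable additivity of $\zeta_{1}$. By~\eqref{e:defeta1} each $\eta_{\xi}$ is a Borel regular measure on $\Om$, and the cited~\cite[Lemma~4.5]{AT_22-preprint} guarantees that $\xi \mapsto \eta_{\xi}(B)$ is Borel measurable for every $B \in \mathcal{B}(\Om)$. Hence for a disjoint sequence $\{B_{i}\} \subseteq \mathcal{B}(\Om)$, $\sigma$-additivity of each $\eta_{\xi}$ together with the monotone convergence theorem yields
\[
\zeta_{1}\Big(\bigcup_{i} B_{i}\Big)= \int_{\mathbb{S}^{n-1}} \sum_{i} \eta_{\xi}(B_{i}) \, \di \mathcal{H}^{n-1}(\xi)= \sum_{i} \zeta_{1}(B_{i}).
\]
The same argument based on Fubini/Tonelli shows that $\hat\zeta$ is $\sigma$-additive on the Borel subsets of $\Om \times \mathbb{S}^{n-1}$. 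With these two facts in hand the two identities become identical, and I focus on the first. For the lower bound $\mathscr{I}_{u,1}(E) \geq \inf_{E \subseteq B}\zeta_{1}(B)$: given any $\{B_{i}\}\in G_{\delta}$ covering $E$, the Borel set $B := \bigcup_{i} B_{i}$ contains $E$ and, by countable subadditivity, $\sum_{i} \zeta_{1}(B_{i}) \geq \zeta_{1}(B) \geq \inf_{E \subseteq B'}\zeta_{1}(B')$. Taking the infimum over $G_{\delta}$ and then the supremum in $\delta$ gives the desired bound. For the upper bound, fix any Borel $B \supseteq E$ and, using a dyadic grid decomposition of $\R^{n}$, partition $B$ into countably many Borel pieces $\{B_{i}\}_{i}$ of diameter at most $\delta$. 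Then $\{B_{i}\}\in G_{\delta}$ and, by \emph{countable additivity} of $\zeta_{1}$, $\sum_{i}\zeta_{1}(B_{i}) = \zeta_{1}(B)$; hence $\inf_{G_{\delta}}\sum \zeta_{1}(B_{i}) \leq \zeta_{1}(B)$, and letting $\delta \to 0^+$ followed by infimum over $B \supseteq E$ concludes. The argument for $\hat{\mathscr{I}}_{u}$ is the same, using a grid on $\R^{n} \times \R^{n}$ intersected with $\Om \times \mathbb{S}^{n-1}$ to partition any Borel cover into small-diameter pieces.

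I do not foresee a genuine obstruction: the only delicate technical input is the Borel measurability of $\xi \mapsto \eta_{\xi}(B)$ needed to make the integrals in~\eqref{e:defeta1} and in Definition~\ref{d:defI2} well posed and to apply Fubini/Tonelli, and this is precisely the content of~\cite[Lemma~4.5]{AT_22-preprint} explicitly invoked in the definition. The grid decompositions used in the upper bound exist because both $\Om$ and $\Om \times \mathbb{S}^{n-1}$ are separable metric spaces and can be partitioned into Borel sets of arbitrarily small diameter by intersecting a refining sequence of cubical grids (on $\R^{n}$, respectively $\R^{n} \times \R^{n}$) with the ambient set.
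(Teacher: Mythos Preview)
Your argument is correct. The paper does not give its own proof of this proposition; it only cites \cite[Proposition~2.9]{AT_22-preprint}, and your reasoning---that for $p=1$ the set function $\zeta_{1}(B)=\int_{\mathbb{S}^{n-1}}\eta_{\xi}(B)\,\di\mathcal{H}^{n-1}(\xi)$ is already a countably additive Borel measure, so the Carath\'eodory construction collapses to the induced outer measure---is the standard route and presumably the one taken in that reference. One minor remark: the paper invokes \cite[Lemma~4.5]{AT_22-preprint} only to justify the integral over $\xi^{\bot}$ in~\eqref{e:defeta1}, not directly the Borel measurability of $\xi\mapsto\eta_{\xi}(B)$; the latter is needed for $\zeta_{p}$ and $\hat\zeta$ to be well defined and is implicit in Definitions~\ref{d:defiu}--\ref{d:defI2}, but strictly speaking it is a separate (equally routine) measurability statement.
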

\EEE


We now show that the jump set of a vector field $u \in GBD_F(\Omega)$ can be sliced into the $0$-dimensional jump set of suitable one dimensional slices of $u$. More precisely, we prove the following refined version of~\cite[Theorem~1.1 and Corollary~1.2]{AT_22-preprint} for a $GBD_F$-vector field.

\begin{theorem}[Slicing of the jump set]
\label{p:euju}
 Let $F \in C^{\infty}(\R^{n} \times \R^{n}; \R^{n})$ satisfy~\eqref{hp:F}--\eqref{hp:F2}, let $\Om$ be an open subset of~$\R^{n}$, let $u \in GBD_{F}(\Omega)$, and let $(P_{\xi})_{\xi \in \mathbb{S}^{n-1}}$ be a family of curvilinear projections on an open subset~$U$ of~$\Om$. Then, it holds true
 \begin{align}
 \label{e:neuju1-u}
 J_{\hat u^\xi_y} &= (J_{u_\xi})^{\xi}_{y} \qquad  \text{for every $\xi \in \mathbb{S}^{n-1}$, for $\mathcal{H}^{n-1}$-a.e. $y \in \xi^\bot$},\\
 \label{e:neuju2-u}
 J_{\hat u^\xi_y} &= (J_{u})^{\xi}_{y} \qquad  \text{for $\mathcal{H}^{n-1}$-a.e. $\xi \in \mathbb{S}^{n-1}$, for $\mathcal{H}^{n-1}$-a.e. $y \in \xi^\bot$},\\
 \label{e:neuju3-u}
 J_{\hat u^\xi_y} &\subseteq (J_{u})^{\xi}_{y} \qquad  \text{for every $\xi \in \mathbb{S}^{n-1}$, for $\mathcal{H}^{n-1}$-a.e. $y \in \xi^\bot$}.
 \end{align}
 Moreover, the following relation between traces holds true for every $\xi \in \mathbb{S}^{n-1}$, for $\mathcal{H}^{n-1}$-a.e. $y \in \xi^\bot$, and for every $t \in (J_{u_\xi})^\xi_y$
 \begin{equation}
 \label{e:euju5-u}
 \aplim_{\substack{z \to x \\ \pm( z - x) \cdot \nu_{u_\xi}(x) >0}} u_{\xi}(z)  = \aplim_{s \to t^{\pm\sigma(x)}}\hat{u}^{\xi}_{y}(s)\,, 
\end{equation}
whenever $x=\varphi_\xi(y+t\xi) \in U $ and $\nu_{u_\xi} \colon J_{u_\xi} \to \mathbb{S}^{n-1}$ is a Borel measurable orientation ($J_{u_\xi}$ is countably $(n-1)$-rectifiable thanks to Theorem \ref{t:delnin}). 
\end{theorem}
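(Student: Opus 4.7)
The plan is to reduce each of \eqref{e:neuju1-u}--\eqref{e:euju5-u} to a Euclidean slicing statement via the bi-Lipschitz diffeomorphism $\varphi_\xi$ parametrizing the curvilinear projection $P_\xi$. For fixed $\xi \in \mathbb{S}^{n-1}$ I set $v_\xi := (u \cdot \xi_\varphi) \circ \varphi_\xi$ on $\varphi_\xi^{-1}(U)$; by identity~\eqref{e:sliceide} one has $\widetilde{v}^{\xi}_y = \hat{u}^{\xi}_y$, while $u_\xi = v_\xi \circ \varphi_\xi^{-1}$ gives $J_{u_\xi} = \varphi_\xi(J_{v_\xi})$ and consequently $(J_{u_\xi})^{\xi}_y = \widetilde{J_{v_\xi}}^{\xi}_y$ for every $y \in \xi^\bot$. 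Corollary~\ref{c:DM-3.5} implies that $v_\xi$ satisfies the hypotheses of Theorem~\ref{t:DM-3.5}, and the standard Euclidean slicing result for scalar functions of locally bounded directional variation then yields $J_{\widetilde{v}^{\xi}_y} = \widetilde{J_{v_\xi}}^{\xi}_y$ for $\mathcal{H}^{n-1}$-a.e.~$y$. Transporting this identity back through $\varphi_\xi$ produces~\eqref{e:neuju1-u}.

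To prove~\eqref{e:neuju2-u} I would invoke the general slicing criterion developed in~\cite{AT_22-preprint}. Thanks to Theorem~\ref{t:delnin}, $J_u$ is countably $(n-1)$-rectifiable, and the assumption~\eqref{hp:F2} is precisely the ingredient that makes the criterion applicable to the measure $\mathscr{I}_{u,1}$ from Definition~\ref{d:defiu}. The criterion gives, for $\mathcal{H}^{n-1}$-a.e.~$\xi$ and a.e.~$y$, the equality $(J_u)^{\xi}_y = J_{\hat{u}^{\xi}_y}$. The inclusion~\eqref{e:neuju3-u} valid for \emph{every} $\xi$ then follows from Lemma~\ref{l:everyxi}, whose role in our scheme is precisely to upgrade the a.e.-in-$\xi$ equality to a one-sided statement holding pointwise in $\xi$ by exploiting the transversal family structure of $(P_\xi)_{\xi \in \mathbb{S}^{n-1}}$ and the fact that a one-dimensional jump of $\hat{u}^\xi_y$ at $t$ propagates to a genuine discontinuity of $u$ at $\varphi_\xi(y+t\xi)$.

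The trace identity~\eqref{e:euju5-u} then follows from~\eqref{e:neuju1-u}: at a point $x = \varphi_\xi(y + t\xi) \in J_{u_\xi}$ the approximate limits of $u_\xi$ on the two half-spaces $\{z : \pm (z-x) \cdot \nu_{u_\xi}(x) > 0\}$ can be pulled back through $\varphi_\xi^{-1}$ to one-sided limits of $\widetilde{v}^{\xi}_y = \hat{u}^{\xi}_y$ at $t$. The sign $\sigma(x) \in \{+1, -1\}$ records whether the tangent vector $\dot\varphi_\xi(y + t\xi)$ is positively or negatively aligned with the Borel orientation $\nu_{u_\xi}$ chosen on the $(n-1)$-rectifiable set $J_{u_\xi}$; combining this with the change of variables and the approximate continuity of $\varphi_\xi$ reconciles the two limits.

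The main obstacle will be \eqref{e:neuju2-u}: countable $(n-1)$-rectifiability of $J_u$ alone does not produce the slice-jump identity in the curvilinear setting, because the parallelogram law that drives the classical $BD$ and $BV^{\mathcal A}$ arguments of~\cite{dal, arr} is unavailable. It is here that the Rigid Interpolation condition~\eqref{hp:F2}, which bounds the curvilinear symmetric gradient in terms of the discrete semi-norm $E_{r,z}$ on the vertices of the simplex $\mathcal{S}_{n,z}$, does the essential work through the slicing machinery of~\cite{AT_22-preprint}, and its interplay with the measures $\eta_\xi$ and $\mathscr{I}_{u,1}$ of Definition~\ref{d:defiu} must be unpacked carefully.
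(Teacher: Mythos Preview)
Your treatment of \eqref{e:neuju2-u} and \eqref{e:neuju3-u} matches the paper: both rely on the slicing criterion of \cite{AT_22-preprint} (stated here as Theorem~\ref{t:rectiu1}) for the a.e.-in-$\xi$ statements, and on Lemma~\ref{l:everyxi} to upgrade \eqref{e:neuju3-u} to every~$\xi$. The gap is in your handling of \eqref{e:neuju1-u}.

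The ``standard Euclidean slicing result for scalar functions of locally bounded directional variation'' that you invoke --- yielding $J_{\widetilde{v}^{\xi}_y}=(J_{v_\xi})^{\xi}_y$ for a.e.~$y$ from the single-direction control furnished by Theorem~\ref{t:DM-3.5} --- does not exist. One inclusion, $(J_{v_\xi})^{\xi}_y\subseteq J_{\widetilde{v}^{\xi}_y}$, does follow from the rectifiability of $J_{v_\xi}$ together with a trace argument on rectifiable sets; this is essentially \cite[Proposition~4.10]{AT_22-preprint}, and is how the paper proves \eqref{e:euju5-u} directly. But the reverse inclusion --- that a one-dimensional jump of $\widetilde{v}^\xi_y$ at~$t$ forces $y+t\xi\in J_{v_\xi}$ --- is false for a scalar function with bounded variation in a single direction. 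Consider $w(x_1,x_2)=H(x_1)f(x_2)$ with $H$ the Heaviside function and $f=\mathbf 1_E$ for a set $E$ of density $1/2$ everywhere: then $D_{e_1}(\tau(w))\in\mathcal M_b$ for every $\tau\in\mathcal T$, the slices $\widetilde w^{e_1}_y$ jump at $t=0$ for $y\in E$, yet $J_w=\emptyset$ because $w$ has no approximate limit from $\{x_1>0\}$ anywhere.

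Consequently the paper's logical order is essentially the reverse of yours. It first obtains \eqref{e:euju5-u} for every~$\xi$ via \cite[Proposition~4.10]{AT_22-preprint}; then Theorem~\ref{t:rectiu1} (this is where \eqref{hp:F2} and the full transversal family enter) gives \eqref{e:neuju1-u}--\eqref{e:neuju3-u} for $\mathcal H^{n-1}$-a.e.~$\xi$; the a.e.-$\xi$ inclusion yields $\mathscr I_{u,1}(\Omega\setminus J_u)=0$, and Lemma~\ref{l:everyxi} upgrades \eqref{e:neuju3-u} to every~$\xi$. The same upgrading argument, applied with $J_u$ replaced by $J_{u_\xi}$, gives $J_{\hat u^\xi_y}\subseteq(J_{u_\xi})^\xi_y$ for every~$\xi$, and the opposite inclusion in \eqref{e:neuju1-u} then follows from the already-established trace identity \eqref{e:euju5-u}. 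So Lemma~\ref{l:everyxi} is needed not only for \eqref{e:neuju3-u} but also for the hard inclusion in \eqref{e:neuju1-u}, and \eqref{e:euju5-u} is an input to \eqref{e:neuju1-u}, not a consequence of it.
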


Before proving Theorem~\ref{p:euju}, we state the equivalent of \cite[Corollary~1.2]{AT_22-preprint} for $u \in GBD_{F}(\Om)$. 

\begin{theorem}
\label{t:rectiu1}
Let $F \in C^{\infty}(\R^{n} \times \R^{n}; \R^{n})$ satisfy~\eqref{hp:F}--\eqref{hp:F2}, let $\Om$ be an open subset of~$\R^{n}$, let $u \in GBD_F(\Omega)$, let $U\subseteq \Om$ open, and let $(P_{\xi})_{\xi \in \mathbb{S}^{n-1}}$ be a family of curvilinear projections on~$U$. Then, it holds that
\begin{equation}
    \label{e:keycontgbd}
    (J_{\hat{u}_\xi})_y^\xi = (J_{u_{\xi}})^\xi_y \qquad \text{for $\mathcal{H}^{n-1}$-a.e.~$\xi \in \mathbb{S}^{n-1}$, for $\mathcal{H}^{n-1}$-a.e.~$y \in \xi^\bot$}.
\end{equation}
\end{theorem}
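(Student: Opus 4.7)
The plan is to derive Theorem~\ref{t:rectiu1} as a direct corollary of the first slicing identity~\eqref{e:neuju1-u} in Theorem~\ref{p:euju}. The bridge between the two statements is the purely tautological observation that for a curvilinear projection $P_\xi$ with parametrization $\varphi_\xi$, the slice of the directional jump set $J_{\hat u_\xi}$ at any level $y \in \xi^\bot$ coincides with the zero-dimensional jump set $J_{\hat u^\xi_y}$ of the corresponding one-dimensional section of $u$.

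First I would prove the set-theoretic identity $(J_{\hat u_\xi})^\xi_y = J_{\hat u^\xi_y}$ for every $\xi \in \mathbb{S}^{n-1}$ and every $y \in \xi^\bot \cap \mathrm{B}_\rho(0)$. Indeed, fix $t \in U^\xi_y$ and set $x := \varphi_\xi(y+t\xi) \in U$. Conditions~(2) and~(3) of the parametrized map ensure that $\varphi_\xi$ is a bi-Lipschitz diffeomorphism onto its image, so that $P_\xi(x) = y$ and $t^\xi_x = t$. Plugging these identities into Definition~\ref{d:Au} gives that $x \in J_{\hat u_\xi}$ if and only if $t \in J_{\hat u^\xi_y}$, which is precisely the claimed identification between subsets of $\mathbb{R}$.

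Second, I would invoke~\eqref{e:neuju1-u} in Theorem~\ref{p:euju}, which asserts that $J_{\hat u^\xi_y} = (J_{u_\xi})^\xi_y$ for every $\xi \in \mathbb{S}^{n-1}$ and $\mathcal{H}^{n-1}$-a.e.~$y \in \xi^\bot$. Combining with the previous step yields $(J_{\hat u_\xi})^\xi_y = (J_{u_\xi})^\xi_y$ for every $\xi \in \mathbb{S}^{n-1}$ and $\mathcal{H}^{n-1}$-a.e.~$y \in \xi^\bot$, which is in fact stronger than the stated conclusion~\eqref{e:keycontgbd}. No substantial obstacle is expected: the genuine content is already packaged inside Theorem~\ref{p:euju}, and this corollary only rephrases the slicing identity through the bookkeeping notation $J_{\hat u_\xi}$ introduced in Definition~\ref{d:Au}.
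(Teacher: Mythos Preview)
Your argument is circular. In the paper, Theorem~\ref{t:rectiu1} is established \emph{first} and then used as an ingredient in the proof of Theorem~\ref{p:euju}. Indeed, the proof of Theorem~\ref{p:euju} explicitly reads: ``Theorem~\ref{t:rectiu1} yields \eqref{e:neuju1-u}--\eqref{e:neuju3-u}, respectively, but for $\mathcal{H}^{n-1}$-a.e.~$\xi \in \mathbb{S}^{n-1}$,'' and only afterwards upgrades the a.e.~$\xi$ statement to every~$\xi$ via Lemma~\ref{l:everyxi}. Thus, invoking~\eqref{e:neuju1-u} to deduce Theorem~\ref{t:rectiu1} assumes what you are trying to prove. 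The tautological identity $(J_{\hat u_\xi})^\xi_y = J_{\hat u^\xi_y}$ that you record in your first step is correct and is used implicitly throughout, but it does not bypass this dependency.

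The paper's actual proof of Theorem~\ref{t:rectiu1} is not self-contained: it is a direct application of \cite[Corollary~1.2]{AT_22-preprint}, a general slicing criterion for integralgeometric measures, specialized to the choice $g(x,z)=z$. The substantive work---verifying that condition (G.2) and item~(3) of that corollary hold---is discharged by the very structure of the $GBD_F$ definition (Definition~\ref{d:GBD}) together with Corollary~\ref{c:DM-3.5}. So the genuine content of Theorem~\ref{t:rectiu1} lives in the companion paper, and Theorem~\ref{p:euju} is the downstream refinement, not the other way around.
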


\begin{proof}
The thesis follows from \cite[Corollary~1.2]{AT_22-preprint} with the choice $g(x, z) = z$ for $(x, z) \in \Om \times \R^{n}$. Indeed, we notice that~\cite[Condition (G.2)]{AT_22-preprint} is automatically satisfied, while~\cite[Corollary~1.2, item (3)]{AT_22-preprint} is automatically satisfied in view of Definition~\ref{d:GBD} and Corollary~\ref{c:DM-3.5}.
\end{proof}

 We now prove an intermediate lemma which allows us to pass from an a.e.~condition on $\xi$ to the entire~$\mathbb{S}^{n-1}$ in~\eqref{e:keycontgbd}. 


\begin{lemma}
\label{l:everyxi}
 Let $F \in C^{\infty}(\R^{n} \times \R^{n}; \R^{n})$ satisfy~\eqref{hp:F}--\eqref{hp:F2}, let $\Om$ be an open subset of~$\R^{n}$, let $u \in GBD_F(\Omega)$, let $U\subseteq \Om$ open, and let $(P_{\xi})_{\xi \in \mathbb{S}^{n-1}}$ be a family of curvilinear projections on~$U$. If $B \in \mathcal{B}(U)$ satisfies $\mathscr{I}_{u,1}(B)=0$, then we have 
 \begin{equation}
     \label{e:everyxi}
     \mathcal{H}^{n-1} \big( P_\xi(J_{\hat u_\xi} \cap B) \big) = 0 \qquad  \text{ for every } \xi \in \mathbb{S}^{n-1}.
 \end{equation}
\end{lemma}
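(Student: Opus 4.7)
The plan is to rephrase \eqref{e:everyxi} as the pointwise identity $\eta_\xi(B)=0$ for every $\xi\in\mathbb{S}^{n-1}$, and then prove it by upgrading an almost-every-$\xi$ conclusion (immediate from the hypothesis) to an every-$\xi$ one via a lower semicontinuity argument in the spirit of Proposition~\ref{p:xi-lsc}.

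For the rephrasing, any one-dimensional BV jump has strictly positive amplitude, hence $|[\hat u^\xi_y(t)]|\wedge 1>0$ at every $t\in J_{\hat u^\xi_y}$. Thus $\eta_\xi(B)=0$ is equivalent to $(B\cap J_{\hat u_\xi})^\xi_y=\emptyset$ for $\mathcal{H}^{n-1}$-a.e.\ $y\in\xi^\bot$; since by definition $P_\xi(J_{\hat u_\xi}\cap B)=\{y\in\xi^\bot : (B\cap J_{\hat u_\xi})^\xi_y\neq\emptyset\}$, this in turn coincides with \eqref{e:everyxi} at~$\xi$. By Proposition~\ref{p:coincidence}, the hypothesis $\mathscr{I}_{u,1}(B)=0$ yields a Borel set $\tilde B\supseteq B$ with $\int_{\mathbb{S}^{n-1}}\eta_\xi(\tilde B)\,\di\mathcal{H}^{n-1}(\xi)=0$, whence $\eta_\xi(B)\leq \eta_\xi(\tilde B)=0$ for $\mathcal{H}^{n-1}$-a.e.\ $\xi\in\mathbb{S}^{n-1}$. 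This already delivers \eqref{e:everyxi} on a full-measure set of directions.

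To upgrade this to every $\xi$, I would establish the lower semicontinuity
\[
\eta_\xi(V)\leq \liminf_{j\to\infty}\eta_{\xi_j}(V) \qquad \text{whenever $V\subseteq U$ is open and $\xi_j\to\xi$ in $\mathbb{S}^{n-1}$,}
\]
mirroring Proposition~\ref{p:xi-lsc} and relying on Corollary~\ref{c:DM-3.5}: the jump contribution $\eta_\xi(V)$ is characterized as a supremum, over families of truncations $\tau_i\in\mathcal{T}$ and pairwise disjoint open subsets $V_i\subseteq V$, of $\sum_i|D_\xi(\tau_i((u\cdot\xi_\varphi)\circ\varphi_\xi))|(\varphi_\xi^{-1}(V_i))$ restricted to the jump part; the smooth dependence of $\varphi_\xi$ on $\xi$ (Definition~\ref{d:param} together with Lemma~\ref{l:3.20-24}) combined with the weak-$*$ lower semicontinuity of distributional derivatives transfers each summand to the limit direction. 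With such an LSC at hand, I would extract, for every $k\in\mathbb{N}$, an open set $V_k\supseteq B$ with $\int_{\mathbb{S}^{n-1}}\eta_\xi(V_k)\,\di\mathcal{H}^{n-1}(\xi)<2^{-k}$ by combining $\mathscr{I}_{u,1}(B)=0$ with outer regularity of $\eta_\xi$, and then use Chebyshev and Borel--Cantelli to find a full-measure set $S\subseteq\mathbb{S}^{n-1}$ on which $\eta_\xi(V_k)\leq 2^{-k/2}$ for all sufficiently large $k$. Given any $\xi_0\in\mathbb{S}^{n-1}$, I would pick $\xi_j\in S$ with $\xi_j\to\xi_0$; the LSC then gives $\eta_{\xi_0}(B)\leq \eta_{\xi_0}(V_k)\leq \liminf_j\eta_{\xi_j}(V_k)\leq 2^{-k/2}$, and letting $k\to\infty$ yields $\eta_{\xi_0}(B)=0$, that is, \eqref{e:everyxi} at~$\xi_0$.

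The hardest step is the lower semicontinuity of $\eta_\xi(V)$: since $\eta_\xi$ captures only the nonlinearly weighted jump contribution of the slices and the parametrizations $\varphi_{\xi_j}\to \varphi_\xi$ carry the test-function representation into moving domains $\varphi_{\xi_j}^{-1}(V)$, ensuring that the supremum formula passes to the limit direction requires careful tracking of the weak-$*$ convergences of the truncated slices $\tau_i((u\cdot\xi_{\varphi_{\xi_j}})\circ \varphi_{\xi_j})$ as $\xi_j\to\xi$.
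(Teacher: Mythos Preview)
Your plan follows the paper's broad strategy—deduce $\eta_\xi(B)=0$ for $\mathcal{H}^{n-1}$-a.e.\ $\xi$ and then upgrade to every $\xi$ via lower semicontinuity in $\xi$—but the key technical step has a gap. You propose to prove lower semicontinuity of $\xi\mapsto\eta_\xi(V)$ on open sets by writing $\eta_\xi(V)$ as a supremum of $\sum_i|D_\xi(\tau_i((u\cdot\xi_\varphi)\circ\varphi_\xi))|(\varphi_\xi^{-1}(V_i))$ ``restricted to the jump part''. But the supremum formula you invoke (from \cite[Theorem~3.8]{dal}, underlying Proposition~\ref{p:xi-lsc}) characterizes the \emph{full} directional measure $\mu^\xi_u(V)$, not its jump part: the truncations $\tau_i\in\mathcal{T}$ act on values and do not separate jump mass from diffuse mass. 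There is no evident way to recast $\eta_\xi(V)$ alone as a supremum of weak-$*$ lower-semicontinuous quantities, and in general the jump part of a BV measure is not lower semicontinuous under approximation (jumps can dissolve into Cantor or absolutely continuous parts). Your Borel--Cantelli scheme is fine once such an LSC is in hand, but as sketched that step does not go through.

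The paper sidesteps this by never needing LSC of $\eta_\xi$: it works throughout with $\mu^\xi_u$, whose lower semicontinuity on open sets is precisely Proposition~\ref{p:xi-lsc}. It then builds the supremum measure $\mu_u(B'):=\sup\sum_i\mu^{\xi_i}_u(B_i)$ over directions $\xi_i$ in the full-measure good set and disjoint Borel $B_i\subseteq B'$, so that $\mu^\xi_u\leq\mu_u$ holds first on open sets (by Proposition~\ref{p:xi-lsc}) and then on all Borel sets (by outer regularity of both Radon measures) for \emph{every} $\xi$. Only at the final line does the argument return to the jump set, computing $\mu^\xi_u(B\cap J_{\hat v_\xi})=\int_{P_\xi(J_{\hat v_\xi}\cap B)}m(y)\,\di\mathcal{H}^{n-1}(y)$ with $m>0$ a.e., so that vanishing of the left side yields \eqref{e:everyxi}. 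The idea you are missing is to run the LSC-and-outer-regularity machinery with the full measure $\mu^\xi_u$ and pass to the jump set only at the end, rather than trying to make $\eta_\xi$ itself lower semicontinuous.
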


\begin{proof}
In order to simplify the notation we assume that $U = \Om$. From the definition of \RRR $\mathscr{I}_{u,1}$ \EEE we have that condition $\mathscr{I}_{u,1}(B)=0$ implies that there exists $N \subseteq \mathbb{S}^{n-1} $ with~$\mathcal{H}^{n-1}(N)=0$ such that for every $\xi \in \mathbb{S}^{n-1} \setminus N$ it holds
\[
\mathcal{H}^{n-1} \big( P_\xi(J_{\hat u_\xi} \cap B)\big) = 0\,.
\]
 If we define the measure $\mu_u$ as
 \begin{equation}\label{e:mi-u-def}
\mu_{u} (\RRR B' \EEE ) := \sup_{k \in \mathbb{N}} \, \sup \sum_{i=1}^{k} \mu^{\xi_{i}}_{u} (B_{i})
\end{equation}
for every $\RRR B' \EEE \in \mathcal{B}(\Om)$, where the second supremum is taken over all the families $\xi_{1}, \ldots \xi_{k} \in \mathbb{S}^{n-1} \setminus N$ and all the families of pairwise disjoint Borel subset~$B_{1}, \ldots, B_{k}$ of~$\RRR B' \EEE$, we have $\mu_u(B)=0$. Since for every $\xi \notin N$ we have by construction $\mu^{\xi}_{u} \leq \tilde{\mu}_u$, Proposition~\ref{p:xi-lsc} implies that for every open set $U \subseteq \Omega$ and every $\xi \in \mathbb{S}^{n-1}$
\begin{equation}
\label{e:muumuxi}
\mu^\xi_{u} (U) \leq \mu_u(U).
\end{equation}
Both measures appearing in \eqref{e:muumuxi} are Radon, hence the outer regularity of Radon measures implies that for every $B' \in \mathcal{B}( \Omega) $ and for every $\xi \in \mathbb{S}^{n-1}$
\begin{equation*}
\mu^\xi_{u} (B') \leq \mu_u(B')\,.
\end{equation*}
The previous inequality computed in $B$ implies that $\mu^\xi_{u}(B)=0$ for every $\xi \in \mathbb{S}^{n-1}$. Denoting by~$v$ a Borel representative of~$u$, we write
\[
\begin{split}
0=\mu^\xi_{u} (B \cap J_{\hat v_\xi})& = \int_{\xi^\bot} |D\hat{u}^\xi_y| \big( B^{\xi}_{y} \cap (J_{\hat v_\xi})_y^\xi \setminus J^1_{\hat{u}^\xi_y} \big) + \mathcal{H}^0 \big(B^{\xi}_{y} \cap (J_{\hat v_\xi})_y^\xi \cap J^1_{\hat{u}^\xi_y})\, \di \mathcal{H}^{n-1}(y)\\
&= \int_{P_\xi(J_{\hat v_\xi} \cap B)} m(y) \, \di \mathcal{H}^{n-1}(y) \,,
\end{split}
\]
where $m(y) >0$ for $\mathcal{H}^{n-1}$-a.e.~$y \in P_\xi(J_{\hat v_\xi} \cap B)$. We deduce $\mathcal{H}^{n-1}(P_\xi(J_{\hat v_\xi} \cap B))=0$ from which \eqref{e:everyxi} follows. 
\end{proof}

We are now in a position to prove Theorem~\ref{p:euju}.


\begin{proof}[Proof of Theorem~\ref{p:euju}]
In order to simplify the notation we assume that $U = \Om$. \RRR Formula~\eqref{e:euju5-u} \EEE holds by \cite[Proposition~4.10]{AT_22-preprint}, since $J_{u_{\xi}}$ is countably $(n-1)$-rectifiable \RRR and $D_{\xi} (u_{\xi} \circ \varphi_{\xi}) \in \mathcal{M}_{b} (\varphi^{-1}_{\xi} (\Om))$ (see Corollary~\ref{c:DM-3.5}). \EEE Theorem~\ref{t:rectiu1} \RRR yields \eqref{e:neuju1-u}--\eqref{e:neuju3-u}, \EEE respectively, but for $\mathcal{H}^{n-1}$-a.e.~$\xi \in \mathbb{S}^{n-1}$. In order to pass from $\HH^{n-1}$-a.e.~$\xi$ to the entire~$\mathbb{S}^{n-1}$, we notice that \RRR the inclusion
\begin{displaymath}
 J_{\hat u^\xi_y} \subseteq (J_{u})^{\xi}_{y} \qquad  \text{for $\HH^{n-1}$-a.e.~$\xi \in \mathbb{S}^{n-1}$, for $\mathcal{H}^{n-1}$-a.e. $y \in \xi^\bot$}
\end{displaymath}
leads to $\mathscr{I}_{u,1}(\Omega \setminus J_u )=0$. \EEE This last information allows us to make use of Lemma~\ref{l:everyxi} and infer $\mathcal{H}^{n-1}(P_\xi(J_{\hat{u}_\xi} \cap (\Omega \setminus J_u)))=0$ for every $\xi \in \mathbb{S}^{n-1}$. Using the identity $(J_{\hat{u}_\xi})^\xi_y= J_{\hat{u}^\xi_y}$ we immediately infer the validity \RRR of \eqref{e:neuju3-u}. In order to prove~\eqref{e:neuju1-u} \EEE we argue as above with $J_u$ replaced by $J_{u_\xi}$ and infer that
\begin{equation*}
    J_{\hat u^\xi_y} \subseteq (J_{u_\xi})^{\xi}_{y} \qquad   \text{for every $\xi \in \mathbb{S}^{n-1}$, for $\mathcal{H}^{n-1}$-a.e.~$y \in \xi^\bot$}.
\end{equation*}
The opposite inclusion is a direct consequence of \eqref{e:euju5-u}.
\end{proof}

\subsection{The approximate symmetric gradient in $GBD_{F}(\Om)$}
\label{s:approximate-sym-gradient}


\RRR In the next theorem \EEE we show that every function~$u \in GBD_{F}(\Om)$ admits an approximate symmetric gradient at a.e.~$x \in \Om$.
We recall the definition of approximate symmetric gradient.

\begin{theorem}[Existence of the approximate symmetric gradient]
\label{t:apsym}
Let $F \in C^{\infty} (\R^{n} \times \R^{n}; \R^{n})$ satisfy~\eqref{hp:F}, let $\Om$ be an open subset of~$\R^{n}$, and let $u \in GBD_{F}(\Omega)$. Then, there exists $e(u) \in L^1(\Om;\mathbb{M}^{n \times n}_{sym})$ such that, setting
 \begin{equation}
\label{e:apsym1.3}
\tilde{e}(u)(x) \zeta \cdot \zeta := e(u)(x)\zeta \cdot \zeta+ u(x) \cdot F(x, \zeta) \qquad \text{for $x \in \Om$ and $\zeta \in \mathbb{R}^n$},
\end{equation}
$\tilde{e}(u)(x)$ is the approximate symmetric gradient of $u$ at $x$ for a.e.~$x \in \Om$ and
\begin{equation}
\label{e:apsym1.3.99}
\int_B |e(u)|\, \RRR \di x \EEE \leq \lambda(B), \qquad \text{ for every } B \subset \Omega \text{ Borel}.
\end{equation}
Moreover, if $(P_{\xi})_{\xi \in \mathbb{S}^{n-1}}$ is a family of curvilinear projections on an open set~$U \subseteq \Om$, then for $\mathcal{H}^{n-1}$-a.e.~$\xi \in \mathbb{S}^{n-1}$ it holds true
\begin{equation}
    \label{e:apsym1.4}
    \nabla \hat{u}_y^\xi (t) = (e(u))^{\xi}_{y} \, \dot{\varphi}_{\xi} (y + t\xi) \cdot \dot{\varphi}_{\xi} (y + t\xi) \quad \text{for $\mathcal{H}^{n-1}$-a.e. }y \in \xi^\bot, \text{ for a.e. } t \in U_y^\xi\,.
\end{equation}
\end{theorem}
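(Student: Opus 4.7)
The driving identity is already recorded in~\eqref{e:intro2}: if $u$ were smooth, then for any curvilinear projection $P_{\xi}$ parametrized by $\varphi_{\xi}$ solving $\ddot{\varphi}_{\xi}=F(\varphi_{\xi},\dot{\varphi}_{\xi})$, the chain rule yields
\begin{equation*}
\tfrac{\mathrm{d}}{\mathrm{d}t}\hat{u}^{\xi}_{y}(t) = \tilde{e}(u)(\varphi_{\xi}(y+t\xi))\,\dot{\varphi}_{\xi}(y+t\xi) \cdot \dot{\varphi}_{\xi}(y+t\xi),
\end{equation*}
where $\tilde{e}(u)\zeta \cdot \zeta = e(u)\zeta \cdot \zeta + u \cdot F(\cdot,\zeta)$. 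The plan is to read this identity backwards: since $u \in GBD_{F}(\Omega)$ has $BV_{loc}$ slices, their approximate derivatives at $t=0$ should be used to define $\tilde{e}(u)$ pointwise, and $e(u)$ is then recovered via~\eqref{e:apsym1.3}. The remaining work is to prove integrability of $e(u)$, to verify that the pointwise values just constructed do come from a symmetric matrix-valued field, and to upgrade the slice-wise differentiation to the full $n$-dimensional approximate limit~\eqref{e:apsym1.1}.

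First I would construct the candidate. Fix $x_{0}\in\Omega$ and use the localized family $\{P_{\xi,x_{0}}\}_{\xi\in\mathbb{S}^{n-1}}$ of Theorem~\ref{p:curvpro}, for which Definition~\ref{d:curvpro} gives $\varphi_{\xi,x_{0}}(x_{0})=x_{0}$ and $\dot{\varphi}_{\xi,x_{0}}(x_{0})=\xi$. By Fubini applied to the slicing bound of Definition~\ref{d:GBD} through Corollary~\ref{c:DM-3.5}, for $\mathcal{L}^{n}$-a.e.\ $x_{0}\in\Omega$ the slice $\hat{u}^{\xi}_{0}$ admits an approximate derivative at $t=0$ for $\mathcal{H}^{n-1}$-a.e.\ $\xi\in\mathbb{S}^{n-1}$; call this value $e^{\ast}_{\xi}(x_{0})$. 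Next I would prove that for $\mathcal{L}^{n}$-a.e.\ $x_{0}$ the map $\xi\mapsto e^{\ast}_{\xi}(x_{0})$ agrees with a quadratic form $\tilde{e}(u)(x_{0})\xi\cdot\xi$ for a unique symmetric matrix $\tilde{e}(u)(x_{0})$; this adapts the polarization argument of \cite[Theorem~9.1]{dal}, transferred to the curvilinear setting via the fact that $D\exp_{x_{0}}(0)=I$ (Lemma~\ref{l:exp}), which makes the localized curvilinear projections agree with orthogonal projections to first order at $x_{0}$. Defining $e(u)(x_{0})$ by \eqref{e:apsym1.3}, the slicing inequality of Definition~\ref{d:GBD} together with $|\dot{\varphi}_{\xi,x_{0}}|\approx 1$ near $x_{0}$ gives \eqref{e:apsym1.3.99}; the identity \eqref{e:apsym1.4} for an arbitrary family of curvilinear projections then follows from the canonical nature of $\tilde{e}(u)(x_{0})$ and the chain-rule identity of the first paragraph applied in the a.e.\ sense to $BV$ slices.

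The main obstacle is to upgrade the slice-wise derivative $\tilde{e}(u)(x_{0})$ to the full $n$-dimensional approximate limit \eqref{e:apsym1.1}. I would address this by a blow-up at $x_{0}$ based on the rescaled curvilinear projections $P_{\xi,x_{0},r}$ of Definition~\ref{d:varphi-xi-r}: by Lemma~\ref{l:3.20-24}, $P_{\xi,x_{0},r}\to\pi_{\xi}$ and $\varphi_{\xi,x_{0},r}\to\mathrm{id}$ in $C^{\infty}_{loc}$ uniformly in $\xi$, and \eqref{e:chi1}--\eqref{e:chi3} provide the analogous flattening for $\phi_{x_{0}}$, $\chi_{x_{0}}$, and $\mathrm{d}(\cdot,x_{0})$. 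With these convergences, after the change of variables $z=x_{0}+rx$, the integrand $|(u(z)-u(x_{0}))\cdot(z-x_{0})-\tilde{e}(u)(x_{0})(z-x_{0})\cdot(z-x_{0})|$ on ${\rm B}_{r}(x_{0})$ is recast as the difference between the rescaled slice $\hat{u}^{\xi(x)}_{0}(r|x|)$ and its first-order prediction from $\tilde{e}(u)(x_{0})$, bounded by the rescaled slicing measures $\mu^{\xi}_{u}({\rm B}_{r}(x_{0}))/r^{n}$; these vanish at a.e.\ $x_{0}$ by Lebesgue differentiation of the Radon measure $\lambda$. Jump contributions are handled by Theorem~\ref{p:euju}, which confines them to a small set of exceptional directions in $\mathbb{S}^{n-1}$. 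Combining these ingredients yields convergence in measure to zero of the rescaled integrand on ${\rm B}_{1}(0)$, which is precisely \eqref{e:apsym1.1}.
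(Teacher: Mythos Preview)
Your outline hits the right high-level ingredients (the local family $\{P_{\xi,x_{0}}\}$, the exponential map, the convergences of Section~\ref{s:curvilinear}), but the order of the two main steps is reversed, and this reversal is not innocuous.

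You propose to first prove that $\xi\mapsto e^{\ast}_{\xi}(x_{0})$ is quadratic by ``adapting the polarization argument of \cite[Theorem~9.1]{dal}, transferred to the curvilinear setting via $D\exp_{x_{0}}(0)=I$'', and only then establish the full approximate limit~\eqref{e:apsym1.1}. In Dal Maso's Euclidean proof, the parallelogram identity for the slice derivatives is available because the key algebraic cancellation in~\eqref{e:apsym1005} uses straight-line difference quotients $(u(x+r\zeta)-u(x))\cdot\zeta$, which combine linearly in~$\zeta$. In the curvilinear setting the curves $t\mapsto\varphi_{\xi}(t\xi)$, $t\mapsto\varphi_{\eta}(t\eta)$, $t\mapsto\varphi_{\xi\pm\eta}(t(\xi\pm\eta))$ are \emph{not} linearly related, and first-order agreement $D\exp_{x_{0}}(0)=I$ only says the curves have the right tangent at $t=0$; the parallelogram identity is a second-order statement and does not follow. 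This is why the paper proceeds in the opposite order: it first proves the full $n$-dimensional approximate limit~\eqref{e:apsym15} with a merely $2$-homogeneous $\tilde{q}$ (this is where the curvilinear-to-Euclidean conversion via $\phi_{x_{0}},\chi_{x_{0}},\mathrm{d}(\cdot,x_{0})$ and the long triangle inequality~\eqref{e:apsym1000} is spent), and only then invokes Lemma~\ref{l:quadratic}, whose proof of the parallelogram law~\eqref{e:apsym16} relies precisely on the Euclidean difference quotients coming out of~\eqref{e:apsym15.99}.

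Your blow-up sketch for~\eqref{e:apsym1.1} has a second gap. You write that the rescaled integrand is ``bounded by $\mu^{\xi}_{u}(\mathrm{B}_{r}(x_{0}))/r^{n}$, which vanishes at a.e.\ $x_{0}$ by Lebesgue differentiation of~$\lambda$''. But $\mu^{\xi}_{u}(\mathrm{B}_{r}(x_{0}))/r^{n}$ converges to the density of $\mu^{\xi}_{u}$, which is typically nonzero (it carries $|e(u)\xi_{\varphi}\cdot\xi_{\varphi}|$). What must vanish is the \emph{remainder} after subtracting the first-order prediction, and this is not controlled by the gross measure bound but by the pointwise approximate differentiability of the one-dimensional $BV$ slices combined with the careful passage from $u(z)\cdot(z-x_{0})$ to $u(z)\cdot\chi_{x_{0}}(z)$ (the slice quantity is $u(\varphi)\cdot\dot\varphi$, not $u(z)\cdot(z-x_{0})$). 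The paper handles this through~\eqref{e:apsym3},~\eqref{e:apsym900}, and the seven-term estimate~\eqref{e:apsym1000}--\eqref{e:apsym14}; this conversion is the technical heart of the argument and cannot be absorbed into a measure bound. Finally, your appeal to Theorem~\ref{p:euju} would import hypothesis~\eqref{hp:F2}, which Theorem~\ref{t:apsym} does not assume; the paper's proof makes no use of the jump slicing.
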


\begin{remark}
\label{r:appsymi}
We notice that formula \eqref{e:apsym1.3} together with the uniqueness of the symmetric approximate gradient tell us that $e(u)$ \emph{does not depend on the chosen family of curvilinear projections}.
\end{remark}

Before proving Theorem~\ref{t:apsym} we need \RRR two intermediate results. \EEE

\begin{lemma}
\label{l:quadratic}
Let $u \colon \Omega \to \mathbb{R}^n$ be measurable and \RRR with compact support in~$\Om$. \EEE Suppose that there exists $\tilde{q} \colon \Omega \times \mathbb{R}^n \to \mathbb{R}$ which \RRR is \EEE $2$-homogeneous in the second variable and satisfies for a.e. $x \in \Omega$
\begin{equation}
    \label{e:apsym15.99}
    \aplim_{z \to x} \frac{|(u(z) - u(x) ) \cdot (z - x) - \tilde{q}(x, z - x ) |}{| z - x |^2}=0\,.
\end{equation}
Then there exists a measurable map $\RRR S(u) \EEE \colon \Omega \to \mathbb{M}^{n \times n}_{sym}$ such that for a.e.~$x \in \Omega$ 
\begin{equation}
\label{e:apsym23}
\RRR S(u) \EEE (x)\zeta \cdot \zeta = \tilde{q}(x, \zeta) \qquad \text{for a.e.~$\zeta \in \mathbb{R}^n$.}
\end{equation}
\end{lemma}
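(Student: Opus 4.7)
The plan is to construct $S(u)$ by polarization from values of $\tilde{q}(x,\cdot)$ on the canonical basis and on its pairwise sums, after first showing that $\tilde{q}(x,\cdot)$ satisfies the parallelogram identity for a.e.\ $x \in \Om$. The central identity will come from applying the approximate limit~\eqref{e:apsym15.99} simultaneously at the four vertices of a small parallelogram based at $x$.

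As a preliminary step, I would show that~\eqref{e:apsym15.99} applied both at $x$ with $z = x+h$ and at $x+h$ with $z = x$, combined with the $2$-homogeneity and evenness of $\tilde{q}(x,\cdot)$, yields the approximate continuity $\tilde{q}(x+h,h) = \tilde{q}(x,h) + o(|h|^2)$ for a.e.\ $(x,h)$. Fix then a point $x$ where~\eqref{e:apsym15.99} holds and, by Fubini, a pair $(h_1, h_2)$ of small norm such that the hypothesis holds also at $b := x+h_1$, $c := x+h_1+h_2$, $d := x+h_2$; set $a := x$. Applying~\eqref{e:apsym15.99} to the two diagonals $(a,c)$ and $(b,d)$, using approximate continuity to replace $\tilde{q}(b,\cdot)$ by $\tilde{q}(x,\cdot)$, decomposing $u(c)-u(a)$ through $b$ and $u(d)-u(b)$ through $a$, and invoking the auxiliary identity $(u(c)-u(b))\cdot h_1 = (u(d)-u(a))\cdot h_1 + o(|h|^2)$ (obtained from $u(c)-u(b) = (u(c)-u(d))+(u(d)-u(b))$ and~\eqref{e:apsym15.99} at $d$), both diagonal equations will reduce to the same linear combination $(u(d)-u(a))\cdot h_1 + (u(b)-u(a))\cdot h_2$, once equal to $\tilde{q}(x,h_1+h_2) - \tilde{q}(x,h_1) - \tilde{q}(x,h_2) + o(|h|^2)$ and once to $\tilde{q}(x,h_1) + \tilde{q}(x,h_2) - \tilde{q}(x,h_1-h_2) + o(|h|^2)$. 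Equating the two will yield
\begin{equation*}
\tilde{q}(x,h_1+h_2) + \tilde{q}(x,h_1-h_2) = 2\tilde{q}(x,h_1) + 2\tilde{q}(x,h_2) + o(|h|^2).
\end{equation*}

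For any fixed $H_1, H_2 \in \mathbb{R}^n$, I would then rescale $(h_1, h_2) = (tH_1, tH_2)$ along a density-$1$ set of scales $t \to 0$ where the hypothesis is valid at $x + tH_1$ and $x + tH_2$. Combined with the $2$-homogeneity of $\tilde{q}(x,\cdot)$, the identity becomes $t^2 \Delta(x;H_1,H_2) = o(t^2)$ for the parallelogram defect $\Delta$, so $\Delta(x;H_1,H_2) = 0$ for every $H_1, H_2$, which means $\tilde{q}(x,\cdot)$ is a quadratic form. I would then define
\begin{equation*}
S(u)(x)_{ii} := \tilde{q}(x, e_i), \qquad S(u)(x)_{ij} := \tfrac{1}{2}\bigl(\tilde{q}(x, e_i+e_j) - \tilde{q}(x, e_i) - \tilde{q}(x, e_j)\bigr), \ i \neq j,
\end{equation*}
so that $S(u)(x)\zeta \cdot \zeta = \tilde{q}(x,\zeta)$ for every $\zeta \in \mathbb{R}^n$; measurability of $S(u)$ in $x$ is an immediate consequence of the joint measurability of $\tilde{q}$.

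The hard part will be the Fubini-type joint extraction of parallelogram vertices and rescaling parameters on which~\eqref{e:apsym15.99} holds simultaneously at all four base points, together with the quantitative bookkeeping that combines the six individual remainders (four edges, two diagonals, plus the approximate continuity of $\tilde{q}$ in its first argument) into a single $o(|h|^2)$ term rather than $O(|h|^2)$.
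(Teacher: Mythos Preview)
Your approach diverges from the paper's in a way that introduces a real gap. The paper does not fix $x$ and work with a parallelogram anchored there; instead it integrates over $x$ from the outset. Via dominated convergence it extracts a sequence $r_\ell\searrow 0$ along which
\[
\int_\Omega\Big|\tfrac{(u(x+r_\ell z)-u(x))\cdot z/|z|}{r_\ell|z|}-\tilde q\big(x,\tfrac{z}{|z|}\big)\Big|\wedge 1\,\di x \to 0
\]
for a.e.\ $z$. For a.e.\ pair $(\zeta_1,\zeta_2)$ one then uses the \emph{centered} configuration $x\pm r_\ell\zeta_1,\ x\pm r_\ell\zeta_2$: the six difference quotients $(u(p)-u(q))\cdot(p-q)/(r_\ell|p-q|)$ cancel \emph{algebraically and exactly}, so the parallelogram defect of $\tilde q(x,\cdot)$ is bounded by the sum of six error terms, each of which (after a translation of the $x$-variable, using compact support) matches the display above. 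Integrating and letting $\ell\to\infty$ gives the parallelogram law for a.e.\ $(\zeta_1,\zeta_2)$ and a.e.\ $x$. A further step --- building a countable dense $\mathbb Q$-vector space $X\subset\R^n\setminus K$ --- is needed to obtain a single null set of $x$'s valid simultaneously for all $\zeta\in X$, and only then does polarization produce $S(u)(x)$. The conclusion is $S(u)(x)\zeta\cdot\zeta=\tilde q(x,\zeta)$ for \emph{a.e.}\ $\zeta$, not every $\zeta$.

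Your argument, by contrast, needs the transversal approximate continuity $\tilde q(x+h_2,h_1)=\tilde q(x,h_1)+o(|h|^2)$ (this is precisely what is required to turn your ``auxiliary identity'' into $\gamma=\beta+o$, and to pass $\tilde q(b,\cdot)$ to $\tilde q(a,\cdot)$ in the two diagonal equations). Your preliminary step only yields the \emph{tangential} version $\tilde q(x+h,h)=\tilde q(x,h)+o(|h|^2)$; it says nothing about how $\tilde q(\cdot,h_1)$ varies in the direction $h_2$. Pointwise at a fixed $x$ this cannot be extracted from~\eqref{e:apsym15.99}, which controls only the radial combination $(u(z)-u(x))\cdot(z-x)$. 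The integrated route bypasses this because translations act continuously in measure on measurable functions of $x$, and all $\tilde q$-evaluations in the defect sit at the same center $x$; your non-centered parallelogram forces $\tilde q$-evaluations at distinct base points, and the bookkeeping does not close. Finally, your claim that $S(u)(x)\zeta\cdot\zeta=\tilde q(x,\zeta)$ for \emph{every} $\zeta$ is stronger than what the lemma asserts and than what the hypotheses support.
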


\begin{proof}
We claim that there exists $S \subseteq \R^{n} \times \R^{n}$ with $\mathcal{L}^{2n} (\R^{n} \times \R^{n} \setminus S)=0$ and such that for every $(\zeta_{1}, \zeta_{2}) \in S$ the parallelogram law
\begin{equation}
\label{e:apsym16}
\tilde{q}(x , \zeta_{1} + \zeta_{2}) + \tilde{q}(x, \zeta_{1} - \zeta_{2}) = 2 \tilde{q}(x, \zeta_{1}) + 2\tilde{q}(x, \zeta_{2}), \ \ (\zeta_{1},\zeta_{2}) \in S, 
\end{equation}
 holds for a.e.~$x \in \Om$ (depending on~$(\zeta_{1} , \zeta_{2})$). Indeed an application of the dominated convergence theorem together with \eqref{e:apsym15.99} and the $2$-homogeneity of $\tilde{q}(x,\cdot)$ \RRR allows \EEE us to write for every $\lambda>0$
\begin{align}
\label{e:apsym1004}
\lim_{r \searrow 0} & \,\lambda^n \int_{{\rm B}_{\frac{1}{\lambda}}(0)} \bigg( \int_{\Omega} \bigg|\frac{(u(x + r\lambda z)-u(x)) \cdot \frac{z}{|z|}}{r\lambda|z|} -\tilde{q}\bigg(x,\frac{z}{|z|}\bigg)\bigg| \wedge 1 \, \di x\bigg) \di z
\\
&
\leq \omega_{n} \lim_{r \searrow 0} \int_\Omega \bigg( \mint_{{\rm B}_r(x)} \frac{|(u(z) - u(x)) \cdot ( z - x ) -\tilde{q}(x, z-x)|}{|z - x|^2} \wedge 1 \, \di z\bigg) \di x = 0\,, \nonumber
\end{align}
where $\omega_{n}:= \mathcal{L}^{n} ({\rm B}_{1}(0))$.
Let us fix $\lambda_{m} \searrow 0$. It follows from~\eqref{e:apsym1004} and a diagonal argument that there exists a sequence $r_{\ell} \searrow 0$ such that for every $m \in \mathbb{N}$ it holds
\begin{equation}
\label{e:apsym7.1}
\lim_{\ell \to \infty} \int_{\Omega} \bigg|\frac{(u(x +r_\ell z) - u(x)) \cdot \frac{z}{|z|}}{r_\ell |z|} - \tilde{q}\bigg(x,\frac{z}{|z|}\bigg) \bigg| \wedge 1 \, \di x =0 \ \  \text{for a.e.~$z \in {\rm B}_{\frac{1}{\lambda_{m}}} (0)$.}
\end{equation}
The arbitrariness of $m \in \mathbb{N}$ in~\eqref{e:apsym7.1} yields
\begin{equation}
\label{e:apsym7.2}
\lim_{\ell \to \infty} \int_{\Omega} \bigg|\frac{(u(x + r_\ell z) - u(x) ) \cdot \frac{z}{|z|}}{r_\ell |z|} - \tilde{q}\bigg(x,\frac{z}{|z|}\bigg)\bigg| \wedge 1 \, \di x = 0\quad \text{for a.e.~$z \in \mathbb{R}^n$.}
\end{equation}

Let us set 
\begin{align*}
\RRR K \EEE & := \{ z \in \R^{n}: \, \text{\eqref{e:apsym7.2} does not hold in~$z$}\} \,,
\\
 S  & := \{ (\zeta_{1}, \zeta_{2}) \in \R^{n} \times \R^{n}: \, \zeta_{1}\pm\zeta_{2} , 2\zeta_{1}, 2\zeta_{2} \notin \RRR K \EEE \}\,.
\end{align*}
As $\mathcal{L}^{n}(\RRR K \EEE )=0$, by Fubini's theorem we have that $\mathcal{L}^{2n} (\R^{n} \times \R^{n} \setminus S) = 0$. Let us fix $(\zeta_{1}, \zeta_{2} ) \in S$. We make use of the parallelogram identity to write for every $x \in \Om$ and for $r_{\ell}$ such that $x \pm r_{\ell} \zeta_{i} \in \Om$ for $i =1, 2$ 
\begin{align}
\label{e:apsym1005}
   &|2\tilde{q}(x, \zeta_{1} + \zeta_{2} ) + 2 \tilde{q}(x ,\zeta_{1} - \zeta_{2}) - 4\tilde{q}(x, \zeta_{1}) - 4\tilde{q}(x,\zeta_{2} ) |\wedge 1 \\
   &\leq \bigg(| \zeta_{1} + \zeta_{2} |^2\bigg|\frac{(u(x + r_\ell \zeta_{1} ) - u (x - r_\ell \zeta_{2} )) \cdot \frac{\zeta_{1} + \zeta_{2}}{| \zeta_{1} + \zeta_{2} | }}{r_\ell | \zeta_{1} + \zeta_{2}|} - \tilde{q}\bigg(x ,\frac{\zeta_{1} + \zeta_{2} }{| \zeta_{1} + \zeta_{2} |}\bigg)\bigg|\bigg)\wedge 1 \nonumber \\
   &\qquad + \bigg(|\zeta_{1} + \zeta_{2} |^2\bigg|\frac{(u(x + r_\ell \zeta_{2} ) - u(x - r_\ell \zeta_{1})) \cdot \frac{\zeta_{1} + \zeta_{2}}{|\zeta_{1} + \zeta_{2} |}}{r_\ell |\zeta_{1}+ \zeta_{2} |} - \tilde{q}\bigg(x,\frac{\zeta_{1}  +\zeta_{2} }{|\zeta_{1} + \zeta_{2} |}\bigg)\bigg|\bigg)\wedge 1 \nonumber \\
   &\qquad + \bigg(|\zeta_{1} - \zeta_{2} |^2\bigg|\frac{(u(x + r_\ell \zeta_{1}) - u(x + r_\ell \zeta_{2} ))  \cdot \frac{\zeta_{1} - \zeta_{2} }{|\zeta_{1} - \zeta_{2} |}}{r_\ell | \zeta_{1} - \zeta_{2} |} - \tilde{q}\bigg(x,\frac{\zeta_{1} - \zeta_{2}}{| \zeta_{1} - \zeta_{2} |}\bigg)\bigg|\bigg)\wedge 1 \nonumber \\
   &\qquad + \bigg(|\zeta_{1} - \zeta_{2} |^2\bigg|\frac{(u(x - r_\ell \zeta_{1} ) - u(x - r_\ell \zeta_{2} ) )   \cdot \frac{\zeta_{1} - \zeta_{2} }{|\zeta_{1} - \zeta_{2} |}}{r_\ell | \zeta_{1} - \zeta_{2} | } - \tilde{q}\bigg(x , \frac{\zeta_{1}  - \zeta_{2} }{| \zeta_{1} - \zeta_{2} |}\bigg)\bigg|\bigg)\wedge 1 \nonumber \\
   &\qquad + \bigg(4|\zeta_{1} |^2\bigg| \frac{(u(x + r_\ell \zeta_{1}) - u(x - r_\ell \zeta_{1})) \cdot \frac{\zeta_{1}}{|\zeta_{1}|}}{2 r_\ell |\zeta_{1}|} - \tilde{q}\bigg(x , \frac{\zeta_{1}}{|\zeta_{1}|}\bigg)\bigg|\bigg)\wedge 1 \nonumber \\
   &\qquad + \bigg( 4 | \zeta_{2} |^2\bigg| \frac{ ( u (x + r_\ell \zeta_{2}) - u(x - r_\ell \zeta_{2}) )  \cdot \frac{\zeta_{2}}{|\zeta_{2}|}}{2 r_{\ell} |\zeta_{2} | } - \tilde{q}\bigg(x , \frac{\zeta_{2}}{|\zeta_{2} | } \bigg)\bigg|\bigg)\wedge 1\,. \nonumber
\end{align}
In particular, we notice that since~$u$ has compact support, the restriction on~$r_{\ell}$ can be made independent of~$x \in \Om$. As~$(\zeta_{1}, \zeta_{2}) \in S$, by integrating~\eqref{e:apsym1005} w.r.t.~$x \in \Om$ and using~\eqref{e:apsym7.2} on each term on the right-hand side of~\eqref{e:apsym1005} we deduce that
\begin{equation*}
\int_{\Omega} |2\tilde{q}(x , \zeta_{1} + \zeta_{2} ) + 2\tilde{q}(x, \zeta_{1} - \zeta_{2}) - 4 \tilde{q}(x , \zeta_{1}) - 4\tilde{q}(x , \zeta_{2} )|\wedge 1 \, \di x =0\,,
\end{equation*}
which in turn implies~\eqref{e:apsym16}. 
We notice that the set of admissible pairs~$(\zeta_{1}, \zeta_{2})$ in~\eqref{e:apsym16} is independent of $x \in \Om$. 

We now claim that there exists a vector subspace $X$ over~$\mathbb{Q}$ which is countable and dense in~$\R^{n}$, fulfills $X \setminus\{0\} \subset \mathbb{R}^n \setminus \RRR K \EEE$, and such that the following hold: for every $\zeta_{1}, \zeta_{2} \in X$ and for \RRR a.e.~$x \in \Om$ \EEE
\begin{equation}
\label{e:apsym16.99}
\tilde{q}(x , \zeta_{1} + \zeta_{2}) + \tilde{q}(x, \zeta_{1} - \zeta_{2}) = 2 \tilde{q}(x, \zeta_{1}) + 2\tilde{q}(x, \zeta_{2})\,. 
\end{equation}
To this regard we construct recursively a basis of~$X$. Let us define $U_1:= \{v \in \mathbb{R}^n \setminus \RRR K \EEE  : \, q v \in \mathbb{R}^n \setminus \RRR K \EEE, \ q \in \mathbb{Q} \setminus \{0\} \}$. Then, it holds
\begin{equation}
\label{e:apsym19}
U_1 = \bigcap_{q \in \mathbb{Q} \setminus \{0\}} q \cdot (\mathbb{R}^n \setminus \RRR K \EEE) \,.
\end{equation}
Indeed, if $v \in U_1$, then for every $q \in \mathbb{Q} \setminus \{0\}$ we have $v/q \in \mathbb{R}^n \setminus \RRR K \EEE$, which implies that~$v$ belongs to the intersection on the right-hand side of \eqref{e:apsym19}. Conversely, if~$v$ belongs to the intersection in the right-hand side of \eqref{e:apsym19}, then for every $q \in \mathbb{Q} \setminus \{0\}$ there exists $w \in \mathbb{R}^n \setminus \RRR K \EEE$ such that $v = qw$. Hence, $v/q \in \mathbb{R}^n \setminus \RRR K \EEE$ for every $q \in \mathbb{Q}\setminus\{0\}$ and~$v \in U_1$. Since $\mathcal{L}^{n} (\RRR K \EEE) =0$,~\eqref{e:apsym19} yields ~$\mathcal{L}^{n} (\R^{n} \setminus U_1 )=0$ and we fix $v_1 \in U_1$. Let $j \leq n$ and suppose we have already defined $U_1, \dotsc, U_{j-1}$ and $v_1, \dotsc,v_{j-1}$. Then, we set 
\begin{align*}
U_j := \{v \in \mathbb{R}^n \setminus \RRR K \EEE : & \, q v + q_1v_1 + \dotsc +q_{j-1}v_{j-1} \in \mathbb{R}^n \setminus \RRR K \EEE\,, 
\\
&
\,  (q,q_1,\dotsc,q_{j-1}) \in \mathbb{Q}^{j} \text{ with } q \neq \{0\} \}\,.
\end{align*} 
We can write
\begin{equation}
\label{e:apsym20}
U_j = \bigcap_{ \substack{ (q,q_1,\dotsc,q_{j-1}) \in \mathbb{Q}^{j}, \\ q \neq 0}} q \cdot (\mathbb{R}^n \setminus \RRR K \EEE) -q_1v_1 -\dotsc - q_{j-1}v_{j-1}\,.
\end{equation}
Indeed, if $v \in U_j$, then given any $j$th-uplet $(q,q_1,\dotsc,q_{j-1}) \in \mathbb{Q}^j$ with $q \neq 0$ we have $v/q + q_1v_1/q \dotsc + q_{j-1}v_{j-1}/q \in \mathbb{R}^n \setminus \RRR K \EEE$. Thus,~$v$ belongs to the intersection on the right-hand side of~\eqref{e:apsym20}. Conversely, if~$v$ belongs to the intersection on the right-hand side of~\eqref{e:apsym20}, then for every $j$th-uplet $(q,q_1,\dotsc,q_{j-1}) \in \mathbb{Q}^j$ and $q \neq 0$ there exists $w \in \mathbb{R}^n \setminus \RRR K \EEE$ such that $v = w/q-q_1v_1/q -\dotsc - q_{j-1}v_{j-1}/q$. Hence, $qv +q_1v_1+ \dotsc+q_{j-1}v_{j-1} \in \mathbb{R}^n \setminus \RRR K \EEE$ and~$v \in U_j$. Since $\mathcal{L}^{n}(\RRR K \EEE) = 0$, we have $\mathcal{L}^{n} (\R^{n} \setminus U_{j}) = 0$ and we can find $v_j \in U_j$. 

Let us set $X := \{q_1v_1 + \dotsc + q_nv_n :\, (q_1,\dotsc,q_n) \in \mathbb{Q}^n  \}$. We check that $X\setminus \{0\} \subseteq \R^{n} \setminus \RRR K \EEE$. For every $n$-tuple $(q_1,\dotsc,q_n) \in \mathbb{Q}^n \setminus\{0\}$ let~$j\leq n$ be  the largest positive integer less than or equal to~$n$ for which $q_j \neq 0$. Then, we have that $q_1v_1 + \dotsc + q_jv_j \in \mathbb{R}^n \setminus \RRR K \EEE$ by definition of~$U_j$. Hence, $X\setminus \{0\} \subseteq \R^{n} \setminus \RRR K \EEE$ and $X$ is a vector space over~$\mathbb{Q}$ which is at most countable and dense in~$\R^{n}$.

Since $\zeta_{1},\zeta_{2} \in X$ implies $(\zeta_{1}, \zeta_{2}) \in S$ and~$X$ is at most countable, we deduce~\eqref{e:apsym16.99}. Let us denote by~$N_{0} \subseteq \Om$ such an exceptional set.
Arguing as in the proof of \cite[Theorem 9.1]{dal}, for every $x \in \Omega \setminus N_0$ we deduce the existence of a symmetric $\mathbb{Q}$-bilinear form ${\rm B}_{x} \colon X \times X \to \mathbb{R}$ such that
\[
{\rm B}_{x}(\zeta,\zeta) = \tilde{q}(x,\zeta) \qquad \text{for every $\zeta \in X$.}
\]
This implies that for every $x \in \Omega \setminus N_0$ there exists $\RRR S(u)(x) \EEE \in \mathbb{M}^{n \times n}_{sym}$ such that 
\begin{equation}
\label{e:apsym22}
\RRR S(u)(x) \EEE \zeta \cdot \zeta = \tilde{q}(x,\zeta) \qquad \text{for every $\zeta \in X$.}
\end{equation}

In order to pass from \eqref{e:apsym22} to \eqref{e:apsym23} let us fix an arbitrary $\zeta \in \mathbb{R}^n$ and let us denote by $X_{\zeta}$ the vector space over $\mathbb{Q}$ generated by $X \cup \{ \zeta\}$. Notice that the set of $\zeta \in \mathbb{R}^n \setminus \RRR K \EEE$ for which $X_{\zeta} \setminus\{0\} \subseteq \mathbb{R}^n \setminus \RRR K \EEE$ has full measure. We choose $\zeta \in \mathbb{R}^n \setminus \RRR K \EEE$ such that $X_{\zeta} \setminus\{0\} \subseteq \mathbb{R}^n \setminus \RRR K \EEE$. Using the same argument above with~$X$ replaced by~$X_{\zeta}$, we deduce the existence of a negligible set $N'_0 \supset N_0$ such that \eqref{e:apsym16.99} holds true with $x \in \Omega \setminus N'_0$ and $\zeta_{1},\zeta_{2} \in X_{\zeta}$. Therefore, we find for every $x \in \Omega \setminus N'_0$ a matrix $\RRR S'(u)(x) \EEE \in \mathbb{M}^{n \times n}_{sym}$ such that~\eqref{e:apsym22} holds true for $\tilde{\zeta} \in X_{\zeta}$. In addition, being $X \subset X_{\zeta}$, for every $x \in \Omega \setminus N'_0$ it holds true $\RRR S(u)(x) \EEE \tilde{\zeta} \cdot \tilde\zeta =\RRR S' (u)(x) \EEE\tilde\zeta
 \cdot \tilde\zeta$ for every $\tilde{\zeta} \in X$. As~$X$ dense in~$\mathbb{R}^n$,~$\RRR S(u)(x)  =  S' (u)(x) \EEE$. 
 
 In order to concludes the proof it remains to prove the measurability of the map $\RRR S(u)(x) \EEE \colon \Omega \to \mathbb{M}^{n \times n}_{sym} $. For this purpose we fix an orthonormal basis $\{w_{1}, \ldots, w_{n}\}$ of~$\R^{n}$ such that $w _{i} \in \R^{n} \setminus \RRR K \EEE$ and $| w_{1} |= \ldots = | w_{n}| =: \alpha >0$. We notice that if we denote by~$\tilde{e}(u)(x)_i^j$ the $(i, j)$ entry of the symmetric matrix $\tilde{e}(u)(x)$ represented with respect to the orthonormal basis $\{w_1/\alpha,\dotsc, w_n/\alpha \}$, we have
\begin{equation}
\label{e:apsym26}
2\alpha^2\RRR S(u)(x)_i^j = S(u)(x) (w_i +w_j) \cdot (w_i +w_j) - S(u)(x) w_i  \cdot w_i -  S(u)(x)  w_j  \cdot w_j \,. \EEE
\end{equation}
By formula~\eqref{e:apsym15.99} we have that $x \mapsto \tilde{q}(x,\xi)$ is $\mathcal{L}^n$-measurable for every $\xi \in \mathbb{S}^{n-1}$. Equalities~\eqref{e:apsym23} and~\eqref{e:apsym26} imply that $x \mapsto \RRR S(u)(x) \EEE$ is a $\mathcal{L}^n$-measurable map with values in~$\mathbb{M}^{n \times n}_{sym}$. 
\end{proof}

\RRR
\begin{lemma}
\label{l:bah}
Let $F \in C^{\infty} (\R^{n} \times \R^{n}; \R^{n})$ satisfy~\eqref{hp:F}, let $\Om$ be an open subset of~$\R^{n}$, let $u \in GBD_{F}(\Omega)$ be Borel measurable, and let~$\{P_{\xi}\}_{\xi \in \mathbb{S}^{n-1}}$ be a family of curvilinear projections on~$\Om$ w.r.t.~$F$ parametrized by~$\varphi$. Then, the set
\begin{align}
\label{e:bah-A}
A &:= \big \{(x, \xi)\in  \Omega \times \mathbb{S}^{n-1} :  \ t^\xi_x \text{ is a Lebesgue point of }\hat{u}^\xi_{P_\xi(x)},
\\
&
\qquad\qquad\qquad \qquad \qquad \,\,\, \, \hat{u}^\xi_{P_\xi(x)} \text{ is  approximatively differentiable at } t^\xi_x \big\}\,, \nonumber
\end{align}
is Borel measurable. Moreover, there exist two Borel measurable maps~$v, \theta \colon \Om \times \mathbb{S}^{n-1} \to \R^{n}$ such that for every $\xi \in \mathbb{S}^{n-1}$ $\theta(\cdot, \xi)$ is Borel measurable and
\begin{align}
    \label{e:apsym111}
   & \aplim_{t \to t^\xi_x} \bigg|\frac{\hat{u}^\xi_{P_\xi(x)}(t) -v ( x , \xi )}{t-t^\xi_x}- \theta(x,\xi ) \bigg|  =0 \qquad  \text{for a.e. } x \in \Omega\,,\\
    \label{e:apsym1.1.1}
   & \nabla \hat{u}^\xi_y (t) = (\theta (\cdot, \xi))^\xi_y (t)  \qquad \text{for $\mathcal{H}^{n-1}$-a.e. }y \in \xi^\bot, \text{ for a.e. } t \in \Omega_y^\xi\,.
\end{align} 
\end{lemma}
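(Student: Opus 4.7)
The plan is to exploit the one-dimensional $BV_{loc}$ regularity of the slices $\hat{u}^\xi_y$ guaranteed by $u \in GBD_F(\Omega)$, to obtain a.e. existence of Lebesgue points and approximate derivatives along each slice, and then upgrade this pointwise information into jointly Borel measurable objects on $\Omega \times \mathbb{S}^{n-1}$. The starting observation is that the slice map
$$S(x, \xi, s) := u(\varphi_\xi(P_\xi(x) + (t^\xi_x + s)\xi)) \cdot \dot{\varphi}_\xi(P_\xi(x) + (t^\xi_x + s)\xi) = \hat{u}^\xi_{P_\xi(x)}(t^\xi_x + s)$$
is jointly Borel in $(x, \xi, s)$, because $u$ is Borel, $\varphi$ is smooth, and $(x, \xi) \mapsto (P_\xi(x), t^\xi_x)$ is Borel as it is built from the smooth diffeomorphism $\varphi$.

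I would then define $v$ and $\theta$ as explicit Borel functions via countable limits of Borel approximations. For $v$ I would set
$$v(x, \xi) := \liminf_{k \to \infty} \frac{k}{2} \int_{-1/k}^{1/k} S(x, \xi, s) \, \di s,$$
which is Borel on $\Omega \times \mathbb{S}^{n-1}$ and, for each $\xi$, coincides $\mathcal{H}^{n-1}$-a.e. in $y$ and $\mathcal{L}^1$-a.e. in $t$ with the precise representative of $\hat{u}^\xi_y$. For $\theta$, an analogous Borel construction is based on the fact that for a one-dimensional $BV_{loc}$ function $f$ the approximate derivative at a point of approximate continuity equals the density of the absolutely continuous part of $Df$. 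Such a density can be recovered as a Borel function of the parameter $(x,\xi)$ via a countable scheme combining truncations $\tau_m \in \mathcal{T}$ (compare Corollary~\ref{c:DM-3.5}) with shrinking averages of divided differences of $S$ along rational radii, in a way that removes the singular and jump contributions in the limit. This guarantees that $\theta(\cdot, \xi)$ is Borel on $\Omega$ for every $\xi$, and agrees with $\nabla \hat{u}^\xi_y$ along $\mathcal{H}^{n-1}$-a.e. slice.

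Borel measurability of $A$ would then follow by writing the conditions ``$t^\xi_x$ is a Lebesgue point of $\hat{u}^\xi_{P_\xi(x)}$'' and ``$\hat{u}^\xi_{P_\xi(x)}$ is approximately differentiable at $t^\xi_x$ with derivative $\theta(x,\xi)$'' as countable intersections, over rationals $\epsilon, r>0$, of Borel conditions on the explicit Borel quantities $S, v, \theta$ introduced above, using the characterizations of approximate continuity/differentiability through vanishing $\mathcal{L}^1$-density of the superlevel sets $\{s : |S(x, \xi, s) - v(x, \xi)| > \epsilon\}$ and $\{s : |S(x, \xi, s) - v(x, \xi) - \theta(x, \xi)\, s| > \epsilon |s|\}$ as $r \to 0$. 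Equations \eqref{e:apsym111} and \eqref{e:apsym1.1.1} are then immediate from the pointwise a.e. identification of $v(x,\xi)$ and $\theta(x,\xi)$ with the true Lebesgue value and the approximate derivative of $\hat{u}^\xi_{P_\xi(x)}$ at $t^\xi_x$, together with the identities $t^\xi_{\varphi_\xi(y+t\xi)} = t$ and $P_\xi(\varphi_\xi(y+t\xi)) = y$. The main obstacle is constructing an explicit Borel formula for $\theta$ that correctly isolates the absolutely continuous density $\nabla \hat{u}^\xi_y$ in the presence of jump and possibly Cantor parts; the truncation mechanism of Corollary~\ref{c:DM-3.5} is the natural device to bypass these singularities and obtain a uniformly Borel construction in $(x,\xi)$.
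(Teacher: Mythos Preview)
Your strategy is plausible but takes a noticeably different route from the paper, and leaves the hardest step unfinished. The paper does not build $v$ and $\theta$ via averaging schemes at all. Instead it defines four functions
\[
u^{\pm}(x,\xi):=\aplims_{s\to t^\xi_x}\big/\aplimi_{s\to t^\xi_x}\;\hat u^\xi_{P_\xi(x)}(s),\qquad
\theta^{\pm}(x,\xi):=\aplims_{s\to t^\xi_x}\big/\aplimi_{s\to t^\xi_x}\;\frac{\hat u^\xi_{P_\xi(x)}(s)-u^{\pm}(x,\xi)}{s-t^\xi_x},
\]
observes (by a measurability result for approximate upper/lower limits quoted from the companion paper) that these are Borel in $(x,\xi)$, and then simply sets
\[
A=\{(x,\xi):u^+(x,\xi)=u^-(x,\xi)\text{ and }\theta^+(x,\xi)=\theta^-(x,\xi)\},
\]
with $v:=u^{\pm}$ and $\theta:=\theta^{\pm}$ on $A$. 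The one-dimensional $BV_{loc}$ property of the slices then gives \eqref{e:apsym111}--\eqref{e:apsym1.1.1} immediately. This completely bypasses the obstacle you yourself flag: there is no need to isolate the absolutely continuous density of $D\hat u^\xi_y$ from jump or Cantor parts, because approximate $\limsup/\liminf$ are defined pointwise everywhere and are Borel regardless of the fine structure of the measure.

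By contrast, your construction of $\theta$ is only sketched. You say it should come from ``a countable scheme combining truncations $\tau_m\in\mathcal T$ with shrinking averages of divided differences,'' but Corollary~\ref{c:DM-3.5} gives control of $|D_\xi(\tau(v))|$ as a \emph{measure}, not a pointwise Borel formula for $\nabla\hat u^\xi_y(t)$; extracting the latter from the former uniformly in $(x,\xi)$ is exactly the nontrivial step, and you have not carried it out. The approximate $\limsup/\liminf$ device is the standard tool precisely because it avoids this issue.
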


\begin{proof}
We set 
\begin{align*}
u^+(x,\xi )&:= \aplims_{s \to t^\xi_x} \, \hat{u}^\xi_{P_\xi(x)}(s)\,, \\
u^-(x,\xi )&:= \aplimi_{s \to t^\xi_x} \, \hat{u}^\xi_{P_\xi(x)}(s) \,,\\
\theta^+(x,\xi ) &:= \aplims_{s \to t^\xi_x} \, \frac{\hat{u}^\xi_{P_\xi(x)}(s)-u^+(x,\xi )}{s-t^\xi_x}\,, \\
\theta^-(x,\xi )&:= \aplimi_{s \to t^\xi_x} \, \frac{\hat{u}^\xi_{P_\xi(x)}(s)-u^-(x,\xi )}{s-t^\xi_x}\,.
\end{align*}
Arguing as in~\cite[Proposition~4.15]{AT_22-preprint} we can prove that $(x,\xi) \mapsto u^\pm(x,\xi )$ and $(x,\xi) \mapsto \theta^\pm(x,\xi  )$ are Borel measurable functions. Since 
\[
A = \{ ( x, \xi ) \in  \Om \times \mathbb{S}^{n-1}  :  \, u^+(x,\xi )=u^-(x,\xi ) \text{ and } \theta^+(x,\xi ) = \theta^-(x,\xi ) \}\,,
\]
we deduce that~$A$ is Borel. For $( x, \xi ) \in A$, let us denote $v(x,\xi ):=  u^+(x,\xi )=u^-(x,\xi )$ and $\theta (x,\xi ) : = \theta^+(x,\xi ) = \theta^-(x,\xi )$.

By definition of $GBD_{F}(\Om)$ we know that given $\xi \in \mathbb{S}^{n-1}$, for $\mathcal{H}^{n-1}$-a.e.~$y \in \xi^\bot$ it holds $\hat{u}^\xi_y \in BV_{\text{loc}}(\Omega_y^\xi)$. Thanks to a well known property of BV functions in one variable, this implies that for fixed $\xi \in \mathbb{S}^{n-1}$, for $\mathcal{H}^{n-1}$-a.e. $y \in \xi^\bot$, for a.e.~$t \in \Omega^{\xi}_{y}$ we have that~$t$ is a Lebesgue point and a point of approximate differentiability of $s \mapsto \hat{u}^\xi_y(s)$. In particular, this implies~\eqref{e:apsym111} and~\eqref{e:apsym1.1.1}.
\end{proof}
\EEE

We now prove Theorem~\ref{t:apsym}.

\begin{proof}[Proof of Theorem \ref{t:apsym}.]
In view of Remark \ref{r:nontrivial}, we can consider a \RRR cover \EEE of~$\Omega$ made of at most countably many open sets $U_i \subset \Omega$ and associated family of curvilinear projections $(P^i_{\xi})_{\xi \in \mathbb{S}^{n-1}}$ on $U_i$. In addition, thanks to Remark \ref{r:appsymi}, we can limit ourselves to prove that for every $i=1,2,\dotsc$ the first part of the theorem is satisfied on $U_i$. Without loss of generality we may thus ease the notation by assuming that $U_i=U=\Omega$ and $(P_{\xi})_{\xi \in \mathbb{S}^{n-1}}$ are curvilinear projections on $\Omega$. Since statements \eqref{e:apsym1.1} and \eqref{e:apsym1.4} do not depend on the representative in the Lebesgue class, we may as well assume~$u$ to be Borel and to coincide with its Lebesgue representative out of a Borel negligible set. Moreover, since the problem is local, we may assume without loss of generality that $u$ has compact support in~$\Om$. \RRR Let us define \EEE
\begin{align*}
A &:= \big \{(x, \xi)\in  \Omega \times \mathbb{S}^{n-1} :  \ t^\xi_x \text{ is a Lebesgue point of }\hat{u}^\xi_{P_\xi(x)},
\\
&
\qquad\qquad\qquad \qquad \qquad \,\,\, \, \hat{u}^\xi_{P_\xi(x)} \text{ is  approximatively differentiable at } t^\xi_x \big\}\,,
 \\
 A_{\xi} &:= \{ x \in U: \, ( x, \xi) \in A\} \qquad A_{x} := \{ \xi \in \mathbb{S}^{n-1}: \, ( x, \xi ) \in A\}\,.
\end{align*}
Then, by Lemma~\ref{l:bah} 
we deduce that~$A$ is Borel. \RRR Moreover, there exists $v, \theta \colon \Om \times \mathbb{S}^{n-1} \to \R^{n}$ Borel measurable such that~\eqref{e:apsym111} and~\eqref{e:apsym1.1.1} hold. In particular, from~\eqref{e:apsym1.1.1} and condition~\eqref{e:slice-2} in Definition~\ref{d:GBD} we deduce that
\begin{align}
\label{e;apsym34}
    \int_{\xi^\bot}\bigg( \int_{ \Om^\xi_y } | (  (\theta (\cdot, \xi))^\xi_y(t)  | \, \di t \bigg) \di \mathcal{H}^{n-1} (y) \leq \|\dot{\varphi}_\xi\|^2_{L^\infty}\text{Lip}(P_\xi)^{n-1}\, \lambda(\Omega) \, ,
\end{align}
which implies (after a change of variables) $\theta (\cdot, \xi )  \in L^1(\Omega)$.   \EEE

Since for every $\xi \in \mathbb{S}^{n-1}$ we have $\mathcal{L}^n(A_\xi)=\mathcal{L}^n(\Omega)$, applying Fubini's theorem to the Borel set $A \subseteq   \Omega \times \mathbb{S}^{n-1} $ we obtain
\[
\mathcal{H}^{n-1}(\mathbb{S}^{n-1})\cdot\mathcal{L}^n(\Omega) = \int_{\mathbb{S}^{n-1}} \mathcal{L}^n(A_\xi) \, \di \mathcal{H}^{n-1}(\xi)= \int_\Omega \mathcal{H}^{n-1}(A_x) \, \di x\,.
\]
This implies that for $\mathcal{L}^n$-a.e.~$x \in \Omega$ we have $\mathcal{H}^{n-1}(A_x)= \mathcal{H}^{n-1}(\mathbb{S}^{n-1})$. Hence, by definition of~$A$ we infer that there exists $N \subseteq \Omega$ with $\mathcal{L}^n (N) =0$ and such that for every $x \in \Omega \setminus N$ 
\begin{align}
\label{e:apsym1'}
\aplim_{t \to t^\xi_x} \, |\hat{u}^\xi_{P_\xi(x)}(t) - \RRR v ( x , \xi ) \EEE | & = 0 \qquad  \text{for $\mathcal{H}^{n-1}$-a.e. }\xi \in \mathbb{S}^{n-1},\\
\label{e:apsym1}
    \aplim_{t \to t^\xi_x} \, \bigg|\frac{\hat{u}^\xi_{P_\xi(x)}(t) - \RRR v(x,\xi )  \EEE }{ t - t^\xi_x} - \RRR \theta ( x ,\xi ) \EEE \bigg|& = 0 \qquad \text{for $\mathcal{H}^{n-1}$-a.e. }\xi \in \mathbb{S}^{n-1}.
\end{align}
Up to consider a larger negligible set, still denoted by $N$, we may suppose that $u$ is approximately continuous at every $x \in \Omega \setminus N$. Thus, by denoting, with abuse of notation, by~$u(x)$ the approximate continuous representative of $u$ at $x\in \Omega \setminus N$,~\eqref{e:apsym1'} and~\eqref{e:apsym1} may be rewritten as
\begin{align}
\label{e:apsym01'}
\aplim_{t \to t^\xi_x} |\hat{u}^\xi_{P_\xi(x)}(t) -u(x)\cdot \xi_{\varphi}(x)|&=0 \ \  \text{for $\mathcal{H}^{n-1}$-a.e. }\xi \in \mathbb{S}^{n-1},\\
\label{e:apsym01}
    \aplim_{t \to t^\xi_x} \bigg|\frac{\hat{u}^\xi_{P_\xi(x)}(t) - u(x)\cdot \xi_{\varphi}(x)}{ t - t^\xi_x} - \RRR \theta (x,\xi ) \EEE \bigg|&=0 \ \ \text{for $\mathcal{H}^{n-1}$-a.e. }\xi \in \mathbb{S}^{n-1}.
\end{align}
Moreover, since~$P_{\xi}$ is \RRR a curvilinear projection and condition~\eqref{CP:item-2} of Definition~\ref{d:CP-maps} holds, \EEE for every $x \in \Omega\setminus N$, for every~$\xi \in \mathbb{S}^{n-1}$ the curve $t\mapsto \exp_{x}(t \xi_\varphi(x))$ (see Definition \ref{d:exp}) coincides for~$t$ small enough with the curve~$t \mapsto \varphi_{\xi} (P_{\xi} (x) + (t + t_{x}^{\xi}) \xi)$ (remember that $\xi_{\varphi}(x) = \dot{\varphi}_{\xi} (P_{\xi} (x) + t_{x}^{\xi} \xi)$). Thanks to property~(3) of Definition~\ref{d:CP} of curvilinear projections, the map $\xi \mapsto \xi_{\varphi}(x)/| \xi_{\varphi}(x)|$ is a diffeomorphism between $\mathbb{S}^{n-1}$ and itself. Therefore,~\eqref{e:apsym01'} and~\eqref{e:apsym01} can be reformulated for every $x \in \Omega \setminus N$ and for $\mathcal{H}^{n-1}$-a.e.~$\xi \in \mathbb{S}^{n-1}$ as
\begin{align}
\label{e:apsym2'}
&\aplim_{t \to 0} |u(\exp_{x}(t \xi_\varphi(x))) \cdot \dot{\exp}_{x}(t \xi_\varphi(x)) -u(x) \cdot \xi_{\varphi}(x) | = 0 \,,
\\
    \label{e:apsym2}
    &\aplim_{t \to 0} \bigg|\frac{u(\exp_{x}(t \xi_\varphi(x))) \cdot \dot{\exp}_{x}(t \xi_\varphi(x)) - u(x) \cdot \xi_{\varphi}(x)}{t} - \RRR \theta (x , \xi ) \EEE \bigg|=0 \,.
\end{align}

Now for every $( x, \xi ) \in \Omega \times \mathbb{S}^{n-1} $ for which~\eqref{e:apsym2'} and~\eqref{e:apsym2} hold, \RRR we set $q(x, \xi_\varphi(x)) := \theta(x, \xi)$, and we define $q(x, \xi_{\varphi}(x) ):=0$ otherwise. Then, we consider \EEE $q \colon \Omega \times \mathbb{R}^{n} \to \mathbb{R}$ the positively $2$-homogeneous extension of $q(x,\xi_\varphi(x))$ in the second variable. This means that, exploiting the fact that $\xi \mapsto \xi_\varphi(x)/|\xi_\varphi(x)|$ is a diffeomorphism of $\mathbb{S}^{n-1}$ and itself, we have
\begin{equation}
\label{e:apsym100}
q(x,\zeta) = \frac{|\zeta|^2}{|\xi_\varphi(x)|^2} \, q(x,\xi_\varphi(x)) \qquad \text{for $(x, \zeta) \in \Omega \times \R^{n}$,}
\end{equation}
whenever $\zeta$ satisfies $\xi_\varphi(x)/|\xi_\varphi(x)|=\zeta/|\zeta|$.

For $x \in \Omega\setminus N$, let  $\phi \colon {\rm B}_{r}(x) \setminus \{x\} \to \mathbb{S}^{n-1}$ be given by Proposition \ref{p:retr}, and let $\chi \colon {\rm B}_{r}(x) \setminus \{x\} \to \mathbb{R}^{n}$ and $d(\cdot,x) \colon {\rm B}_{r}(x)  \to \mathbb{R}$ be given by Definition \ref{d:chi1.1.1} (notice that, for simplicity of notation, we have dropped the index~$x$). We now show that 
\begin{equation}
\label{e:apsym3}
\aplim_{z \to x}\bigg|\frac{u(z)\cdot \chi(z) - u(x)\cdot \phi(z)}{\RRR {\rm d} \EEE (z,x)} -q\big(x,\phi(z)\big) \bigg|=0\,.
\end{equation}
By~\eqref{e:chi3}, there exists $c>0$ such that $\mathcal{H}^1(\{\phi^{-1}(\xi) \cap {\rm B}_{\rho}(x)\})\leq c\rho$ for every $\xi \in \mathbb{S}^{n-1}$ and for every $\rho \in (0,r)$. Thanks to~\eqref{e:retr2} we can make use of Coarea formula with map~$\phi$ to write for $\rho \in (0, r)$ 
\begin{align*}
    &\mint_{{\rm B}_\rho (x)}\bigg|\frac{u(z)\cdot \chi(z) - u(x)\cdot \phi(z)}{d(z, x)} -  q \big(x, \phi(z) \big) \bigg|\wedge 1 \, \di z \\ 
    &\leq c \int_{\mathbb{S}^{n-1}} \bigg(\mint_0^{c\rho}\bigg|\frac{ u (\exp_{x}(t \eta) ) \cdot \dot{\exp}_{x}(t \eta) - u(x)\cdot \eta}{t}- q(x, \eta) \bigg|\wedge 1 \, \frac{|\dot{\exp}_x(t\eta)|}{|\text{J}\phi(\exp_{x}(t \eta))|} \, \di t\bigg) \di \mathcal{H}^{n-1} (\eta) \\
    &\leq c' \int_{\mathbb{S}^{n-1}} \bigg(\mint_0^{c\rho}\bigg|\frac{ u (\exp_{x}(t \eta) ) \cdot \dot{\exp}_{x}(t \eta) - u(x)\cdot \eta}{t}- q(x, \eta) \bigg|\wedge 1 \,  |\exp_{x}(t \eta) - x|^{n-1 } \, \di t\bigg) \di \mathcal{H}^{n-1} (\eta)\\
    &\leq c'' \int_{\mathbb{S}^{n-1}} \bigg(\mint_0^{c\rho} \bigg|\frac{ u (\exp_{x}(t \eta) ) \cdot \dot{\exp}_{x}(t \eta) - u(x)\cdot \eta}{t}- q(x, \eta) \bigg|\wedge 1 \,  \RRR t^{n-1 } \EEE \, \di t\bigg) \di \mathcal{H}^{n-1}(\eta).
\end{align*}
By~\eqref{e:apsym2}--\eqref{e:apsym100} and dominated convergence we infer~\eqref{e:apsym3}. Setting 
\begin{align*}
q_{w}( x, \zeta)  := w \cdot F(x, \zeta) \qquad \text{for every $w, \zeta \in \R^{n}$,}
\end{align*}
by using the definition of exponential map, it can be directly shown that if we replace~$u$ with a constant map $w \in \R^{n}$ in~\eqref{e:apsym3} it holds
\begin{equation}
\label{e:apsym900}
\begin{split}
   \aplim_{z \to x} \bigg|\frac{w \cdot (\chi(z) - \phi(z))}{\text{d}(z,x)} - q_{w}(x,\phi(z)) \bigg|=0\,.
\end{split}
\end{equation}

We introduce
\begin{equation}
\label{e:apsymq}
\tilde{q}(x,\zeta ) :=  q ( x, \zeta ) \RRR - \EEE q_{u(x)}(x,\zeta) \qquad   x \in \Omega \setminus N\text{ and }   \zeta \in \mathbb{R}^n.
\end{equation}
We now show that for every $x \in \Omega \setminus N$ it holds
\begin{equation}
    \label{e:apsym15}
    \aplim_{z \to x} \frac{|(u(z) - u(x) ) \cdot (z - x) - \tilde{q}(x, z - x ) |}{| z - x |^2}=0\,.
\end{equation}
To simplify the notation we set $e(z):= (z - x)/|z - x|$. Making use of the maps~$\phi$ and~$\chi$ defined for $x \in \Omega \setminus N$, we first estimate by triangle inequality 
\begin{align}
\label{e:apsym1000}
   \bigg|\frac{ ( u (z) - u(x)) \cdot e(z)}{|z - x |} - \tilde{q}(x , e(z))\bigg|
   &\leq \bigg|\frac{( u ( z) - u (x) ) \cdot (e(z) - \chi(z))}{ | z - x |} \bigg| \\ \nonumber
   &+ \bigg|\frac{u(x) \cdot (\chi(z) - \phi(z))}{\text{d}(z,x)} -  q_{u(x)}(x,\phi(z)) \bigg|\\\nonumber
   &+ \bigg|\frac{u(z)\cdot \chi(z) - u(x) \cdot \phi(z)}{\text{d}(z,x)} -  q(x,\phi(z)) \bigg| \nonumber\\\nonumber
   &+ \bigg| \frac{u (x) \cdot (\phi(z) - \chi(z) )}{\text{d} ( z,x)} \bigg| \bigg| 1 - \frac{\text{d} (z,x)}{|z - x|} \bigg|\\\nonumber
   &+ \bigg|\frac{u(z)\cdot \chi(z) - u(x) \cdot \phi(z)}{\text{d} (z , x)} \bigg|\bigg| 1 - \frac{\text{d} (z, x)}{| z - x |}\bigg| \nonumber\\\nonumber
    &+ | q_{u(x)}(x,\phi(z)) - q_{u(x)} (x,e(z))|\\\nonumber
    &+ | q (x,\phi(z)) - q(x,e(z)) |\,.\nonumber
\end{align}
Now we examine the limit as $r \to 0^+$ of each term appearing in the right-hand side of~\eqref{e:apsym1000}. By~\eqref{e:chi2} and by the approximate continuity of~$u$ at~$x$ we have that
\begin{align}
\label{e:apsym8}
&\limsup_{r \searrow 0}  \mint_{ {\rm B}_r(x)}\frac{|( u (z) - u(x))\cdot (e (z)-\chi(z))|}{| z - x|}\wedge 1 \, \di z \\
&\leq  \limsup_{r \searrow  0} \bigg[\bigg(\sup_{z \in {\rm B}_{r}(x) \setminus \{x\}}\frac{| e (z)-\chi(z)|}{|z - x |}\bigg) \vee 1 \bigg]\mint_{{\rm B}_r(x)}| u (z) - u(x) |\wedge 1 \, \di z =0\,.\nonumber
\end{align}
%

By definition of~$\RRR q_{u(x)} \EEE$ we have
\begin{align}
\label{e:apsym1001}
     &\limsup_{r \searrow 0} \mint_{{\rm B}_r(x)}| \RRR q_{u(x)} \EEE (x,\phi(z)) - \RRR q_{u(x)} \EEE (x,e(z)) |\wedge 1 \, \di z\\
     &\leq\limsup_{r \searrow 0} \mint_{{\rm B}_r(x)}|u(x)\cdot [F(x,\phi(z)) -F(x,e(z))]|\wedge 1 \, \di z \nonumber\,.
     \end{align}
     We show that the limsup on the right-hand side of inequality~\eqref{e:apsym1001} goes to zero. To this purpose we write
     \begin{align}
     & \limsup_{r \searrow 0} \mint_{{\rm B}_r(x)}|u(x)\cdot [F(x,\phi(z)) -F(x,e(z))]|\wedge 1 \, \di z \nonumber\\
    &\leq(|u(x)|\vee 1)\limsup_{r \searrow 0} \int_0^1\bigg( \int_{\partial {\rm B}_s(0)} |F(x,\phi(x + r\eta)) -F(x_0,e(x+r\eta))|\, \di \mathcal{H}^{n-1} (\eta) \bigg) \di s \nonumber\\
    &=(|u(x)|\vee 1)\limsup_{r \searrow 0} \int_0^1\bigg(s^{n-1} \int_{\mathbb{S}^{n-1}} |F(x ,\phi(x +  rs\xi)) -F(x , \xi)|\, \di \mathcal{H}^{n-1} (\xi) \bigg) \di s\,. \nonumber
\end{align}
Convergence \eqref{e:chi1} implies that 
\begin{equation}
\label{e:apsym1002}
\phi(x + rs\xi) \to \xi \text{ in $C^\infty(\mathbb{S}^{n-1};\mathbb{S}^{n-1})$ as $r \searrow 0$, uniformly in $s \in (0,1)$.}  
\end{equation}
As $F(x,\cdot) \colon \mathbb{S}^{n-1} \to \R^{n}$ is an $\mathcal{H}^{n-1}$-measurable function, for $\epsilon>0$ there exists $\psi \in C^0(\mathbb{S}^{n-1};\R^{n})$ with $\int_{\mathbb{S}^{n-1}} |F(x,\xi) -\psi(\xi)|\, \di \mathcal{H}^{n-1}(\xi) \leq \epsilon$. Denoting by $\phi^{-1}_{x,s,r}(\xi)$ the inverse of the map of $\xi \mapsto \phi(x + rs\xi)$, we can continue with
\begin{align}
\label{e:apsym1003}
     &\limsup_{r \searrow 0} \mint_{{\rm B}_r(x)}|u(x)\cdot [F(x,\phi(z)) -F(x,e(z))]|\wedge 1 \, \di z \\
     &\leq(|u(x)| \vee 1) \bigg(  \limsup_{r \searrow 0} \int_0^1\bigg(s^{n-1} \int_{\mathbb{S}^{n-1}} |F(x , \phi(x + rs\xi) ) - \psi( \phi (x + rs \xi) ) | \, \di \mathcal{H}^{n-1} (\xi) \bigg) \di s \nonumber\\
     &\qquad + \limsup_{r \searrow 0} \int_0^1 \bigg(s^{n-1} \int_{\mathbb{S}^{n-1}} |\psi( \phi (x + rs\xi) ) - \psi (\xi) | \, \di \mathcal{H}^{n-1} (\xi) \bigg) \di s \nonumber \\
     &\qquad +  \limsup_{r \searrow 0} \int_0^1\bigg(s^{n-1} \int_{\mathbb{S}^{n-1}} | F (x, \xi) - \psi(\xi) | \, \di \mathcal{H}^{n-1} (\xi) \bigg) \di s \bigg) \nonumber \\
     &  \leq( | u(x) | \vee 1) \bigg( \limsup_{r \searrow 0} \int_0^1\bigg(s^{n-1} \int_{\mathbb{S}^{n-1}} | F(x, \xi ) -\psi (\xi) | \, | J_\xi\phi^{-1}_{x,s,r}(\xi) | \, \di \mathcal{H}^{n-1} (\xi) \bigg) \di s \nonumber \\
     &\qquad + \limsup_{r \searrow 0} \int_0^1\bigg(s^{n-1} \int_{\mathbb{S}^{n-1}} | F(x,\xi) - \psi(\xi) |\, \di \mathcal{H}^{n-1} (\xi) \bigg) \di s \bigg) \nonumber \\
     &\leq \epsilon (|u(x)| \vee 1) \bigg(  \limsup_{r \searrow 0} \int_0^1 \|J_\xi\phi^{-1}_{x,s,r}\|_{L^\infty(\mathbb{S}^{n-1})} s^{n-1}\,\di s + \frac{1}{n} \bigg). \nonumber
     \end{align}
 By~\eqref{e:apsym1002} we have that $\|J_\xi \phi^{-1}_{x,s,r}\|_{L^\infty(\mathbb{S}^{n-1})} \to 1$ uniformly in $s \in (0,1)$ as $r \searrow 0$. This also means that for every but sufficiently small $r>0$, $\|J_\xi \phi^{-1}_{x,s,r}\|_{L^\infty(\mathbb{S}^{n-1})}$ is bounded uniformly with respect to $s \in (0,1)$. 
 Applying the dominated convergence theorem to~\eqref{e:apsym1003} we infer that
\[
\limsup_{r\searrow 0} \mint_{{\rm B}_r(x)}|u(x)\cdot [F(x,\phi(z)) -F(x,e(z))]|\wedge 1 \, \di z \leq  \frac{2\epsilon \, (|u(x)|\vee 1)}{n}\,. 
\]
Thanks to the arbitrariness of $\epsilon >0$ we conclude that
\begin{equation}
\label{e:apsym11}
\lim_{r \searrow 0} \mint_{{\rm B}_r(x)}|u(x)\cdot [F(x,\phi(z)) -F(x,e(z))]|\wedge 1 \, \di z  =0\,,
\end{equation}
which in turn implies that
\begin{equation}
\label{e:apsym2000}
\limsup_{r \searrow 0} \mint_{{\rm B}_r(x)}| q_{u(x)}(x,\phi(z)) - q_{u(x)}(x,e(z)) |\wedge 1 \, \di z = 0\,.
\end{equation}

From~\eqref{e:apsym2} it follows that $ q(x,\cdot) \colon \mathbb{S}^{n-1} \to \mathbb{R}$ is an $\mathcal{H}^{n-1}$-measurable function for every $x \in \Omega \setminus N$. Therefore, we may argue as in~\eqref{e:apsym1001}--\eqref{e:apsym11} to deduce that
\begin{equation}
\label{e:apsym12}
\lim_{r \searrow 0} \mint_{{\rm B}_r(x)}| q(x,\phi(z)) - {q}(x,e(z)) | \wedge 1\, \di z =0\,.
\end{equation}

By triangle inequality, by~\eqref{e:chi3}, by~\eqref{e:apsym900} with $w = u(x)$, and by dominated convergence we get that
\begin{align}
    \label{e:apsym13}
        &\limsup_{r \searrow 0}\mint_{{\rm B}_r(x)} \bigg( \bigg| \frac{u(x) \cdot (\phi(z) - \chi(z))}{\text{d}(z,x)} \bigg| \bigg| 1 - \frac{\text{d}(z,x) }{ | z - x |} \bigg| \bigg) \wedge 1 \, \di z \\
        &\leq \limsup_{r \searrow 0}\mint_{{\rm B}_r(x)} \bigg( \bigg| \frac{u(x) \cdot (\chi(z) - \phi(z))}{\text{d}(z,x)} - {q}_{u(x)}(x,\phi(z)) \bigg| \bigg|1 - \frac{\text{d}(z,x)}{| z - x |} \bigg| \bigg) \wedge 1 \, \di z \nonumber \\
        &\qquad + \limsup_{r \searrow 0}\mint_{{\rm B}_r(x)} \bigg(| {q}_{u(x)}(x,\phi(z)) | \bigg| 1 - \frac{\text{d}(z, x)}{| z - x |} \bigg|\bigg) \wedge 1 \, \di z \nonumber\\
        &=  \limsup_{r \searrow 0}\mint_{{\rm B}_1(0)} \bigg(| {q}_{u(x)}(x,\phi(x + rz)) | \bigg| 1 - \frac{\text{d}(x + rz,x)}{|rz|} \bigg|\bigg) \wedge 1 \, \di z =0\,. \nonumber
\end{align}
In the very same way we also deduce that
\begin{equation}
    \label{e:apsym14}
    \lim_{r \searrow 0}\mint_{{\rm B}_r(x)} \bigg(\bigg|\frac{u(z) \cdot \chi(z) - u(x)\cdot \chi(z)}{\text{d}(z,x)} \bigg| \bigg| 1 - \frac{\text{d}(z, x)}{ | z - x |} \bigg|\bigg) \wedge 1 \, \di z =0\,.
\end{equation}
Combining~\eqref{e:apsym3},~\eqref{e:apsym900}, and~\eqref{e:apsym1000}--\eqref{e:apsym14} we infer~\eqref{e:apsym15}  for every $x \in \Omega \setminus N$.


 We are in position to apply Lemma \ref{l:quadratic} and infer the existence of a  \RRR symmetric \EEE bi-linear form $\tilde{e}(u)(x) \colon \mathbb{R}^n \times \mathbb{R}^n \to \mathbb{R}$ such that \RRR
 \begin{equation}
 \label{e:maybe-needed}
 \tilde{e}(u)(x) \zeta \cdot \zeta = \tilde{q} (x, \zeta) \qquad \text{for every $\zeta \in \R^{n}$}
 \end{equation}
 holds true. \EEE This, together with~\eqref{e:apsym15}, implies~\eqref{e:apsym1.1}. 
 
 By Lemma \ref{l:quadratic} $x \mapsto \tilde{e}(u)(x)$ is an $\mathcal{L}^n$-measurable map with values in~$\mathbb{M}^{n \times n}_{sym}$. Thus, the map $(x, \zeta) \mapsto \tilde{e}(u)(x)\zeta \cdot \zeta$ is $(\mathcal{L}^n \times \mathcal{L}^n)$-measurable. Therefore, thanks to Fubini's theorem and the $2$-homogeneity of $\tilde{q}(x,\cdot)$, relation~\eqref{e:maybe-needed} can be turned into the following one: for $\mathcal{H}^{n-1}$-a.e. $\xi \in \mathbb{S}^{n-1}$
\begin{equation}
    \label{e:apsym30}
    \tilde{e}(u)(x)\xi\cdot \xi = \tilde{q}(x,\xi) \qquad  \text{for a.e.~$x \in \Omega$.}
\end{equation}
\RRR Setting \EEE for a.e.~$x \in \Omega$ 
\begin{equation}
    \label{e:apsym32}
    e(u)(x)\zeta \cdot \zeta := \tilde{e}(u)(x)\zeta \cdot \zeta \RRR + \EEE u(x) \cdot F(x, \zeta ) \qquad \text{for $\zeta \in \mathbb{R}^n$,}
\end{equation}
we infer from~\eqref{e:apsym1.1.1}, \RRR from~\eqref{e:apsymq}, and from the equality~$\theta(x, \xi) = q(x, \xi_{\varphi} (x))$ \EEE that for $\mathcal{H}^{n-1}$-a.e.~$\xi \in \mathbb{S}^{n-1}$
\begin{equation}
    \label{e:apsym31}
    \nabla \hat{u}^\xi_y (t) = (e(u) )^{\xi}_{y} \, \dot{\varphi}_{\xi} (y + t\xi) \cdot \dot{\varphi}_{\xi} (y + t\xi)\quad \text{for $\mathcal{H}^{n-1}$-a.e.~$y \in \xi^\bot$,   for a.e.~$t \in \Omega_y^\xi$.}
\end{equation}

Finally, by~\eqref{e;apsym34} we have that
\begin{align}
    \label{e:apsym35}
    \int_\Omega & |e(u)(x)| \, \di x \leq \int_{\mathbb{S}^{n-1}} \bigg( \frac{1}{c(\Omega)}\int_U |e(u)(x)\xi_{\varphi}(x) \cdot \xi_{\varphi}(x)| \, \di x  \bigg)\di \mathcal{H}^{n-1}(\xi) \\
    &\leq \int_{\mathbb{S}^{n-1}} \bigg( \frac{c'(\Omega)}{c(\Omega)}\int_{\xi^\bot}\bigg(\int_{U_y^\xi} |(e(u) )^{\xi}_{y} \, \dot{\varphi}_{\xi} (y + t\xi) \cdot \dot{\varphi}_{\xi} (y + t\xi) | \, \di t \bigg) \di \mathcal{H}^{n-1}(y)  \bigg) \di \mathcal{H}^{n-1}(\xi)\nonumber\\
    &\leq \frac{c'(\Omega)}{c(\Omega)} \sup_{\xi \in \mathbb{S}^{n-1}} \|\dot{\varphi}_\xi\|^2_{L^\infty (\Omega)} \text{Lip}(P_\xi) ^{n-1} \, \lambda(\Omega)\,, \nonumber
\end{align}
where 
\[
c(\Omega):= \inf_{x \in \Omega}\min_{\substack{\RRR A \in \mathbb{M}^{n \times n}_{sym} \\ |A|=1\EEE}} \int_{\mathbb{S}^{n-1}} |\RRR A \EEE \xi_\varphi(x) \cdot \xi_\varphi(x)|\, \di \mathcal{H}^{n-1}(\xi) \ \ \text{and} \ \ c'(\Omega):= \sup_{\substack{\xi \in \mathbb{S}^{n-1}\\ x \in \Omega}}|\text{J}P_\xi(x)|^{-1}. 
\]

It remains to show that $e(u) \in L^{1}(\Om; \mathbb{M}^{n}_{sym})$ and inequality \eqref{e:apsym1.3.99}. To this end, given $\epsilon >0$, for every $x \in \Om$ we fix $r(x) \in (0, +\infty)$ such that the family of maps~$(P_{\xi, x})_{\xi \in \mathbb{S}^{n-1}}$ given in~Definition~\ref{d:P-xi} is a family of curvilinear projections on~${\rm B}_{r(x)}(x)$ and for every $0<r \leq r(x)$
\begin{align}
 \sup_{\RRR \xi \in \mathbb{S}^{n-1} \EEE}\| \dot\varphi_{\xi, x}\|_{L^{\infty} ({\rm B}_{r}(x))} \leq 1+\epsilon\,, &\qquad  \sup_{\RRR \xi \in \mathbb{S}^{n-1} \EEE}\text{Lip}(P_{\xi, x}; {\rm B}_{r} (x) ) \leq 1+\epsilon\,, \label{e:phi-P1}\\
 \frac{1}{1+\epsilon} \leq c({\rm B}_{r}(x))\,,& \qquad c'({\rm B}_{r}(x)) \leq 1+\epsilon\,. \label{e:phi-P2}
\end{align}
Notice that such~$r(x)$ exists in view of Theorem~\ref{p:curvpro} \RRR and of Lemma~\ref{l:3.20-24}. \EEE Hence, in view of Remark \ref{r:appsymi} the same estimate~\eqref{e:apsym35}--\eqref{e:phi-P2} holds true if we replace $(P_\xi)_{\xi \in \mathbb{S}^{n-1}}$ and~$\Omega$ with $(P_{\xi,x})_{\xi \in \mathbb{S}^{n-1}}$ and $\mathrm{B}_{r}(x)$ ($0<r\leq r(x)$), respectively. This implies that for every $x \in \Om$ it holds true
\begin{equation}
\label{e:phi-P3}
\int_{{\rm B}_{r} (x)}  |e(u)(z)| \, \di z \leq (1+\epsilon)^{n+3} \, \lambda ( {\rm B}_{r} (x)) \qquad \text{for } 0 < r \leq r(x).
\end{equation} 
We can apply Vitali covering theorem \RRR (see, e.g.,~\cite[Theorem~2.19]{afp}), \EEE  to find sequences $r_i>0$ and~$x_{i} \in \Om$ such that the family~$\{{\rm B}_{r_i} (x_{i}): \, i \in \mathbb{N}\}$ is pairwise disjoint and $\mathcal{L}^n(\Omega \setminus \bigcup_i {\rm B}_{r_i}(x_i))=0$. Therefore, we infer from~\eqref{e:phi-P3} and the arbitrariness of $\epsilon>0$ that
\begin{equation}
\label{e:phi-99}
\int_{\Om} | e(u)|\, \di x \leq  \lambda(\Om) <+\infty\,.
\end{equation} 
In particular $e(u) \in L^{1}(\Om; \mathbb{M}^{n \times n}_{sym})$. To conclude we notice that the same argument yields \eqref{e:phi-99} with $\Omega$ replaced by any open subsets $U \subseteq \Omega$. Thus, relation~\eqref{e:apsym1.3.99} follows from the approximation property by means of open sets of Radon measure and the proof is concluded.\EEE
\end{proof}

\begin{remark}
We notice that, differently from the classical Euclidean case~\cite[Theorem~9.1]{dal}, equality~\eqref{e:apsym1.4} does not hold for every $\xi \in \mathbb{S}^{n-1}$. For this reason, we show in the next proposition that $\nabla \hat{u}^{\xi}_{y}$ can always be controlled in terms of~$ (e(u) )^{\xi}_{y} \, \dot{\varphi}_{\xi} (y + t\xi) \cdot \dot{\varphi}_{\xi} (y + t\xi)$.
\end{remark}

\begin{proposition}
Let $\Om$ be an open subset of~$\R^{n}$, $u \in GBD_{F}(\Omega)$, and let $e(u) \in L^{1}(\Om; \mathbb{M}^{n\times n}_{sym})$ be the map determined in Theorem~\ref{t:apsym}. Then, for every family $(P_\xi)_{\xi \in \mathbb{S}^{n-1}}$ of curvilinear projections on some open set~$U \subset \Omega$, and every $\xi \in \mathbb{S}^{n-1}$ we have 
\begin{align}
    \label{e:inapsym1.2}
    &|\nabla\hat{u}_y^\xi (t) | \leq |(e(u) )^{\xi}_{y} \, \dot{\varphi}_{\xi} (y + t\xi) \cdot \dot{\varphi}_{\xi} (y + t\xi) | \quad  \text{for $\mathcal{H}^{n-1}$-a.e.~$y \in \xi^\bot$,  for a.e.~$t \in U_y^\xi$.}
\end{align}
\end{proposition}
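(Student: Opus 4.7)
The inequality~\eqref{e:inapsym1.2} is a pointwise estimate holding for \emph{every} $\xi \in \mathbb{S}^{n-1}$, whereas Theorem~\ref{t:apsym} furnishes the stronger equality~\eqref{e:apsym1.4} only for $\mathcal{H}^{n-1}$-a.e.~$\xi$. The plan is to invoke~\eqref{e:apsym1.4} on the conull set of good directions and extend by an approximation argument to the residual null set.

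Fix $\xi \in \mathbb{S}^{n-1}$. For good $\xi$ the inequality follows at once from~\eqref{e:apsym1.4}. For the exceptional $\xi$, I would select a sequence $\xi_k \to \xi$ of good directions. By the joint smoothness of the parametrization granted by Definition~\ref{d:param}, one has $\varphi_{\xi_k} \to \varphi_\xi$ and $\dot{\varphi}_{\xi_k} \to \dot{\varphi}_\xi$ in $C^\infty_{loc}$. Combined with $e(u) \in L^1(\Om; \mathbb{M}^{n \times n}_{sym})$ from Theorem~\ref{t:apsym}, dominated convergence then yields $(e(u))^{\xi_k}_y \dot{\varphi}_{\xi_k} \cdot \dot{\varphi}_{\xi_k} \to (e(u))^\xi_y \dot{\varphi}_\xi \cdot \dot{\varphi}_\xi$ in $L^1_{loc}$ of the slice variables after integration in~$y$. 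At each $\xi_k$ the equality in~\eqref{e:apsym1.4} produces an $L^1$-equi-integrable family of slice densities. Combining this with the lower semicontinuity $\mu^\xi_u(U) \leq \liminf_k \mu^{\xi_k}_u(U)$ from Proposition~\ref{p:xi-lsc} and passing to the limit in the Radon--Nikodym decomposition of $D\hat{u}^{\xi_k}_y$, I would then deduce that the absolutely continuous part of $D\hat{u}^\xi_y$ is dominated by $|(e(u))^\xi_y \dot{\varphi}_\xi \cdot \dot{\varphi}_\xi|\, \mathcal{L}^1$, yielding~\eqref{e:inapsym1.2} at $\mathcal{H}^{n-1}$-a.e.~$y$ and a.e.~$t \in U^\xi_y$.

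The main obstacle is the identification of the weak-$\ast$ limit of $\nabla \hat{u}^{\xi_k}_y \, \mathcal{L}^1$ with the absolutely continuous part of $D\hat{u}^\xi_y$, since approximate derivatives are defined only a.e.~and need not converge pointwise along slices. I expect this to be handled by a slice-by-slice Fatou-type argument, exploiting that the good-direction equality enforces the absolute continuity of the one-dimensional slicing measures at each $\xi_k$ and that their densities converge in $L^1_{loc}$ to an explicit envelope depending only on $e(u)$ and $\dot{\varphi}_\xi$.
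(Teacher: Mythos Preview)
Your overall strategy matches the paper's: fix $\xi$, approximate by good directions $\xi_j \to \xi$ for which the equality~\eqref{e:apsym1.4} holds, invoke the lower semicontinuity of Proposition~\ref{p:xi-lsc}, and use $e(u)\in L^1$ together with the uniform convergence $\xi_{j,\varphi}\to\xi_\varphi$ and $JP_{\xi_j}\to JP_\xi$ to control the right-hand side.

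The gap is exactly the obstacle you flag. A slice-by-slice Fatou argument is problematic because the slices $\Omega^{\xi_j}_y$ and $\Omega^\xi_y$ are different one-dimensional sets, so there is no natural convergence of $\nabla\hat u^{\xi_j}_y$ to $\nabla\hat u^\xi_y$ to exploit. The paper avoids this entirely by never passing to the limit at the slice level. Instead it works with the Radon measures $\mu^\xi_u$ on $\Omega$ (Definition~\ref{d:muximeasure}) and introduces the auxiliary measure
\[
\tilde\mu_{u,k}(B)=\sup\sum_i \mu^{\xi_{j_i}}_u(B_i),
\]
the supremum being over finite families $\{\xi_{j_i}\}\subset\{\xi_j:j\ge k\}$ and pairwise disjoint Borel $B_i\subset B$. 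Lower semicontinuity plus outer regularity give $\mu^\xi_u(B)\le\tilde\mu_{u,k}(B)$ for every Borel $B$. Since the left-hand side of the claimed integral inequality is an absolutely continuous measure, it is dominated by the absolutely continuous part $\tilde\mu^a_{u,k}$, which can be computed explicitly via~\eqref{e:apsym1.4} for the good $\xi_j$ and shown to converge (using $e(u)\in L^1$ and uniform convergence of $\xi_{j,\varphi}$, $JP_{\xi_j}$) to $\int_B |e(u)\,\xi_\varphi\cdot\xi_\varphi|\,|JP_\xi|\,\di x$ as $k\to\infty$. This yields the inequality
\[
\int_{\xi^\bot}\int_{B^\xi_y}|\nabla\hat u^\xi_y|\,\di t\,\di\mathcal{H}^{n-1}(y)\le \int_{\xi^\bot}\int_{B^\xi_y}|(e(u))^\xi_y\,\dot\varphi_\xi\cdot\dot\varphi_\xi|\,\di t\,\di\mathcal{H}^{n-1}(y)
\]
for every Borel $B$, and the pointwise inequality~\eqref{e:inapsym1.2} follows by testing on the set where it would fail.
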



\begin{proof}
In order to simplify the notation we assume $U=\Omega$. We denote by \RRR $v, \theta \colon \Om \times \mathbb{S}^{n-1} \to \R^{n}$ \EEE the maps constructed in Lemma~\ref{l:bah} by means of the family $(P_\xi)_{\xi \in \mathbb{S}^{n-1}}$. \RRR In particular, we have that for every $\xi \in \mathbb{S}^{n-1}$
\begin{equation}
    \label{e:apsym1.1.1.1}
    \nabla \hat{u}^\xi_y (t) = (\theta(\cdot, \xi))^\xi_y (t)  \qquad \text{for $\mathcal{H}^{n-1}$-a.e. }y \in \xi^\bot, \text{ for a.e. } t \in \Omega_y^\xi\,.
\end{equation}\EEE

We claim that for every $\xi \in \mathbb{S}^{n-1}$ and for every $B \in \mathcal{B}(\Omega)$ Borel we have
\begin{align}
    \label{e:inapsym1}
    \int_{\xi^\bot} & \bigg(\int_{B^\xi_y} |\RRR \theta(\cdot, \xi))^\xi_y (t) \EEE | \, \di t\bigg) \di \mathcal{H}^{n-1}(y)
    \\
    &
     \leq \int_{\xi^\bot} \bigg(\int_{B^\xi_y} |(e(u) )^{\xi}_{y} \, \dot{\varphi}_{\xi} (y + t\xi) \cdot \dot{\varphi}_{\xi} (y + t\xi) | \, \di t\bigg) \di \mathcal{H}^{n-1}(y). \nonumber
\end{align}

Let us set~$N:= \{\xi \in \mathbb{S}^{n-1}: \, \text{\eqref{e:apsym1.4} is satisfied in~$\xi$}\}$ and let us fix~$\xi \in \mathbb{S}^{n-1}$. Since~$\mathcal{H}^{n-1}(\mathbb{S}^{n-1} \setminus N) = 0$, there exists a sequence $\xi_j \in N$ such that $\xi_j \to \xi$ as $j \to \infty$. We define the measure $\tilde{\mu}_{u,k}$ as in \eqref{e:mi-u-def} with~$\xi$ is restricted to the family~$(\xi_j)_{j \geq k}$. By construction it holds that $\mu^{\xi_j}_{u} \leq \tilde{\mu}_{u,k}$  for every $j\geq k$. Therefore, Proposition~\ref{p:xi-lsc} and Theorem~\ref{t:apsym} imply that for every open set $V \subseteq \Omega$ 
\begin{equation}
\label{e:100}
     \int_{\xi^\bot} \bigg(\int_{V^\xi_y} |\RRR \theta(\cdot, \xi))^\xi_y (t) \EEE| \, \di t\bigg) \di \mathcal{H}^{n-1}(y) \leq \mu^\xi_{u} (V) \leq \tilde{\mu}_{u,k}(V) \qquad \text{for every }k.
\end{equation}
The measures appearing in inequality~\eqref{e:100} are Radon. Hence, we deduce that for every $B \in \mathcal{B}(\Omega)$ we have 
\begin{equation}
\label{e:inapsym2}
   \int_{\xi^\bot} \bigg(\int_{B^\xi_y} | \RRR \theta(\cdot, \xi))^\xi_y (t) \EEE | \, \di t\bigg) \di \mathcal{H}^{n-1}(y) \leq \mu^\xi_{u} (B) \leq \tilde{\mu}_{u,k}(B) \qquad  \text{for every }k.
\end{equation}
 We infer from~\eqref{e:apsym1.4} that for every~$j$ the absolutely continuous part~$(\mu_{u}^{\xi_j})^{a}$ of~$\mu_{u}^{\xi_j}$ w.r.t.~the Lebesgue measure is given by 
\begin{displaymath}
(\mu_{u}^{\xi_j})^{a} (B)= \int_{\xi_j^\bot} \bigg( \int_{B^{\xi}_{y}} (e(u))^{\xi_{j}}_{y} \, \dot{\varphi}_{\xi_{j}} (y + t\xi_{j}) \cdot \dot{\varphi}_{\xi_{j}} (y + t\xi_{j}) \, \di t\bigg)\di \mathcal{H}^{n-1}(y) \qquad \text{for $B \in \mathcal{B}(\Omega)$.}
\end{displaymath}
Therefore, for every~$k$ the absolutely continuous part~$\tilde{\mu}^a_{u,k}$ of~$\tilde{\mu}_{u,k}$ w.r.t.~the Lebesgue measure is given by
\begin{align}
\label{e:inapsym3}
\tilde{\mu}^a_{u,k}(B)& = \sup \sum_{j} (\mu_{u}^{\xi_j})^{a} (B_j) 
\\
& =\sup \sum_{j} \int_{\xi_j^\bot} \bigg( \int_{(B_j)^{\xi_j}_y} (e(u))^{\xi_{j}}_{y} \, \dot{\varphi}_{\xi_{j}} (y + t\xi_{j}) \cdot \dot{\varphi}_{\xi_{j}} (y + t\xi_{j})  \, \di t\bigg) \di \mathcal{H}^{n-1}(y) \nonumber\\
&= \sup \sum_{j} \int_{B_j} |e(u)(x) \xi_{j, \varphi} (x) \cdot \xi_{j, \varphi} (x)| |JP_{\xi_j}(x)|\, \di x \nonumber
\end{align}
for every $B \subset \Omega$ Borel, where the supremum is taken among all subsets of indices in $\{j: j\geq k \}$ and among all finite families of pairwise disjoint Borel sets $B_j$ contained in $B$. Notice that \RRR in view of condition~\ref{d:CP-4} in Definition~\ref{d:CP} \EEE we can estimate
\[
\begin{split}
    &\int_{\Omega} (|e(u) (x) \xi_{j, \varphi} (x) \cdot \xi_{j, \varphi} (x)| |JP_{\xi_j}(x)| - | e(u)(x) \xi_{\varphi}(x) \cdot \xi_{\varphi}(x)| |JP_{\xi}(x)|)\, \di x \\
    &\leq \RRR C \EEE \int_{\Omega} |e(u)(x)| |\xi_{j, \varphi} (x) - \xi_{\varphi}(x)| |JP_{\xi_j}(x)| \, \di x + \int_{\Omega} |e(u)(x)| ( | JP_{\xi_j}(x)|  - | JP_{\xi}(x)|) \, \di x\,,
\end{split}
\]
\RRR for a positive constant~$C$ independent of~$j$. \EEE Therefore, exploiting the fact that $\xi_{j, \varphi} (x) \to \xi_{\varphi}(x)$ and $|JP_{\xi_j}(x)| \to |JP_{\xi}(x)|$ uniformly in~$\Omega$ as $j \to \infty$ and that $e(u) \in L^1(\Omega;\mathbb{M}^{n \times n}_{sym})$ we get from~\eqref{e:inapsym3} that
\begin{equation}
\label{e:inapsym4}
\begin{split}
\tilde{\mu}^a_{u,k}(B) = \int_{\xi^\bot} \bigg( \int_{B^\xi_y} (e(u))^{\xi}_{y} \, \dot{\varphi}_{\xi} (y + t\xi) \cdot \dot{\varphi}_{\xi} (y + t\xi) \, \di t\bigg) \di \mathcal{H}^{n-1}(y) + O(k^{-1}).
\end{split}
\end{equation}
Combining \eqref{e:inapsym2} and \eqref{e:inapsym4} we conclude~\eqref{e:inapsym1}.

Inequality~\eqref{e:inapsym1} can be extended to all measurable sets, and in particular holds for $B = \{x \in \Omega : \, \RRR \theta ( x ,\xi) \EEE > e(u) (x)\xi_\varphi(x)\cdot \xi_\varphi(x) \}$. This implies that for every $\xi \in \mathbb{S}^{n-1}$
\begin{align}
\label{e:inapsym1.1}
    &| (\theta(\cdot, \xi))^{\xi}_{y} (t)  | \leq  | (e(u))^{\xi}_{y} \, \dot{\varphi}_{\xi} (y + t\xi) \cdot \dot{\varphi}_{\xi} (y + t\xi)|
\end{align}
for $\mathcal{H}^{n-1}$-a.e.~$y \in \xi^\bot$, for  a.e.~$t \in \Omega_y^\xi$. Finally,~\eqref{e:inapsym1.2} follows from~\eqref{e:apsym1.1.1.1} and~\eqref{e:inapsym1.1}.
\end{proof}

\subsection{Slicing the jump set in $GBD(\rm M)$}
\label{s:jump-M}

 Let $({\rm M}, g)$ be a Riemannian manifold of dimension~$n$. In this subsection we recover the slicing properties of the jump set of $\omega \in GBD({\rm M})$.  In order to state the result, we need the notion of family of curvilinear projections on the manifold~$\rm M$, which follows from Definition~\ref{d:CP} of family of curvilinear projections on~$\Omega\subseteq \R^{n}$.

\begin{definition}[Family of curvilinear projections on~$\rm M$]
\label{d:CP_M111}
Let $V \subseteq \rm M$ open. We say that a family of maps $P_\xi \colon V \to \xi^{\bot}$ for $\xi \in \mathbb{S}^{n-1}$ is a family of \emph{curvilinear projections} on~$V$ if for every chart~$(U, \psi)$ we have that the $\{\overline{P}_{\xi} = P_{\xi}\circ \psi^{-1} \}_{\xi \in\mathbb{S}^{n-1}}$ is a family of curvilinear projections on~$\psi(U\cap V)$.
\end{definition}

Given $\omega \in \mathcal{D}^{1}(\rm M)$, $V \subseteq {\rm M}$ open, $\{P_{\xi}\}_{\xi \in \mathbb{S}^{n-1}}$ a family of curvilinear projections on~$V$, for every chart $(U, \psi)$ and every $\xi \in \mathbb{S}^{n-1}$ we set
\begin{displaymath}
\omega_{\xi} (p) : = u(\psi(p)) \cdot \xi_{\overline{\varphi}} (\psi(p))\,,
\end{displaymath}
where $\{\overline{P}_{\xi} = P_{\xi} \circ \RRR \psi^{-1} \EEE \}_{\xi \in \mathbb{S}^{n-1}}$ is a family of curvilinear projections on~$\psi(U\cap V)$, $\overline{\varphi}$ is a parametrization of the family $\{\overline{P}_{\xi} \}_{\xi \in \mathbb{S}^{n-1}}$ according to Definition~\ref{d:param}, $\xi_{\overline{\varphi}}$ is the velocity field defined in Definition~\ref{d:velocity-field}, and~$u$ is as in~\eqref{e:vpsi}.

\begin{theorem}
    \label{t:jumpslicingform}
    Let $({\rm M}, g)$ be an $n$-dimensional Riemannian manifold and let $\omega \in GBD(\rm M)$. Then $J_\omega$ is countably $(n-1)$-rectifiable. Moreover, if $(U,\psi)$ is a chart and $(P_\xi)_{\xi \in \mathbb{S}^{n-1}}$ is a family of curvilinear projections on $U$, then it holds true
 \begin{align}
 \label{e:neuju1}
 J_{\hat \omega^\xi_y} &= (J_{\omega_\xi})^{\xi}_{y} \qquad  \text{for every $\xi \in \mathbb{S}^{n-1}$, for $\mathcal{H}^{n-1}$-a.e. $y \in \xi^\bot$},\\
 \label{e:neuju2}
 J_{\hat \omega^\xi_y} &= (J_{\omega})^{\xi}_{y} \qquad  \text{for $\mathcal{H}^{n-1}$-a.e. $\xi \in \mathbb{S}^{n-1}$, for $\mathcal{H}^{n-1}$-a.e. $y \in \xi^\bot$},\\
 \label{e:neuju3}
 J_{\hat \omega^\xi_y} &\subseteq (J_{\omega})^{\xi}_{y} \qquad  \text{for every $\xi \in \mathbb{S}^{n-1}$, for $\mathcal{H}^{n-1}$-a.e. $y \in \xi^\bot$}.
 \end{align}
In addition, the following relation between traces holds true for every $\xi \in \mathbb{S}^{n-1}$, for $\mathcal{H}^{n-1}$-a.e. $y \in \xi^\bot$, and for every $t \in (J_{\omega_\xi})^\xi_y$
 \begin{equation}
 \label{e:euju5}
 \aplim_{\substack{q \to p \\ q \in H^{\pm} (p)}} \omega_{\xi}(q)  = \aplim_{s \to t^{\pm\sigma(p)}}\hat{\omega}^{\xi}_{y}(s)\,, 
\end{equation}
whenever $p=\varphi_\xi(y+t\xi) \in U $ and $\nu_{\omega_\xi} \in \Gamma( U )$ is a Borel measurable orientation of $J_{\omega_{\xi}}$.
    \end{theorem}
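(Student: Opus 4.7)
The plan is to reduce the statement to its Euclidean counterpart, Theorem~\ref{p:euju}, by working locally on charts of~${\rm M}$ and exploiting the equivalence between $GBD({\rm M})$ and $GBD_{F}(\psi(U))$ established in Proposition~\ref{p:equivalent}, for $F$ as in~\eqref{e:chris}.

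First, I would fix a countable atlas $\{(U_{k}, \psi_{k})\}_{k}$ on~${\rm M}$ with $U_{k} \Subset {\rm M}$ and consider the coordinate representations $u_{k} \colon \psi_{k}(U_{k}) \to \R^{n}$ of~$\omega$ defined via~\eqref{e:vpsi}. By Proposition~\ref{p:equivalent}, each $u_{k}$ belongs to $GBD_{F}(\psi_{k}(U_{k}))$, and by~\cite[Section~6.4]{AT_22-preprint} the field~$F$ satisfies both~\eqref{hp:F} and~\eqref{hp:F2}. Moreover, by Remark~\ref{r:M2} a family of curvilinear projections $(P_{\xi})_{\xi \in \mathbb{S}^{n-1}}$ on~$U_{k}$ corresponds to a family of curvilinear projections with respect to~$F$ on~$\psi_{k}(U_{k})$ via $\overline{P}_{\xi} := P_{\xi} \circ \psi_{k}^{-1}$, with matching parametrizations $\overline{\varphi}_{\xi} = \psi_{k} \circ \varphi_{\xi}$.

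The second step is to establish the basic correspondences between jump sets. The explicit computation in the proof of Proposition~\ref{p:equivalent} already gives $\hat{\omega}^{\xi}_{y}(t) = \hat{u}^{\xi}_{y}(t)$ for every admissible~$(y,t)$, hence $J_{\hat{\omega}^{\xi}_{y}} = J_{\hat{u}^{\xi}_{y}}$, and, since $\omega_{\xi} = u_{\xi} \circ \psi_{k}$ by the very definition of~$\omega_{\xi}$, also $(J_{\omega_{\xi}})^{\xi}_{y} = (J_{u_{\xi}})^{\xi}_{y}$. The nontrivial identification is $p \in J_{\omega} \cap U_{k} \iff \psi_{k}(p) \in J_{u_{k}}$, with the normals related through $D\psi_{k}(p)$ and with matching one-sided approximate traces. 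This is proved by expanding $\text{exp}_{p}^{-1}$ and $d\,\text{exp}_{p}[q]$ in the chart coordinates: one checks that $\psi_{k} \circ \text{exp}_{p}$, once ${\rm T}_{p}{\rm M}$ is identified with~$\R^{n}$ via the coordinate basis, coincides with the Euclidean exponential map of Definition~\ref{d:exp} for the field~$F$ in~\eqref{e:chris}. Lemma~\ref{l:exp} then shows that $\text{exp}_{p}^{-1}$ equals the identity to first order and that $d\,\text{exp}_{p}[q]$ converges to the identity as $q \to p$, so the half-balls $H^{\pm}(p)$ pull back, up to sets of Lebesgue density zero at~$\psi_{k}(p)$, to Euclidean half-balls, and the Riemannian approximate limit in~\eqref{d:formjumpset} translates into the standard Euclidean approximate limit defining $J_{u_{k}}$.

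With these correspondences at hand, countable $(n-1)$-rectifiability of $J_{\omega}$ follows by applying Theorem~\ref{t:delnin} to each $u_{k}$ and using that rectifiability is preserved under the bi-Lipschitz maps $\psi_{k}^{-1}$. The slicing identities~\eqref{e:neuju1}--\eqref{e:neuju3} are then direct translations of~\eqref{e:neuju1-u}--\eqref{e:neuju3-u} from Theorem~\ref{p:euju} applied to each~$u_{k}$, and the trace formula~\eqref{e:euju5} is obtained from~\eqref{e:euju5-u} together with the matching of one-sided traces. The main obstacle is precisely the local identification $J_{\omega} \cap U_{k} \leftrightarrow J_{u_{k}}$ outlined above: one must quantify, uniformly in~$q$ near~$p$, the first-order deviation of $d\,\text{exp}_{p}[q]$ from the identity so as to pass the approximate limit through this map, and verify that the half-spaces on which the one-sided limits are taken coincide up to sets of zero Lebesgue density at~$p$.
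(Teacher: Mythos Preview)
Your proposal is correct and follows essentially the same route as the paper: reduce to a chart via Proposition~\ref{p:equivalent}, identify $\psi(J_{\omega} \cap U) = J_{u}$ through the first-order behavior of $\exp_{p}$ and $d\,\exp_{p}[q]$, invoke Theorem~\ref{t:delnin} for rectifiability, and then read off~\eqref{e:neuju1}--\eqref{e:euju5} from Theorem~\ref{p:euju}. The paper compresses your ``main obstacle'' into a one-line geometric argument using the continuity of the frame fields $g_{i}$, $g^{i}$ and the fact that $d\,\exp_{p}[q] \to \mathrm{id}$ as $q \to p$, but the content is the same.
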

    \begin{proof}
    Since rectifiability is a local property we reduce ourselves to work on a chart $(U,\psi)$. Notice that
    \[
    \langle \omega(q), d \, \exp_p[q](v) \rangle_{q}  = u(\psi(q)) \cdot  \overline{v}(\psi(q)), 
    \]
    where $u \colon \psi(U) \to \mathbb{R}^n$ is defined as in~\eqref{e:vpsi} and $\overline{v} \colon \psi(U) \to \mathbb{R}^n$ are the components of $d  \exp_p[q](v)$ w.r.t.~the basis $(g_i(q))_i$. From the continuity of $g_i(\cdot)$ and $g^i(\cdot)$ together with the facts that $\lim_{q \to p} \overline{v}(\psi(q)) = \overline{v}(\psi(p))$ and the map $v \mapsto \overline{v}(\psi(p))$ is an isomorphism between the vector spaces ${\rm T}_p{\rm M}$ and $\mathbb{R}^n$ (for every $p$), a standard geometric argument leads to 
    \[
    \psi(J_\omega \cap U) = J_u \cap \psi(U).
    \]
    The countably $(n-1)$-rectifiability of $J_\omega$ follows thus from Theorem~\ref{t:delnin}. 

    Now we consider the family of curvilinear projections $(\overline{P}_\xi)_{\xi \in \mathbb{S}^{n-1}}$ on $\psi(U)$ defined as $\overline{P}_\xi := P_\xi \circ \psi^{-1}$. Since by Proposition \ref{p:equivalent} we have $u \in GBD_F(\psi(U))$, the remaining part of theorem follows by a direct application of Theorem \ref{p:euju}.
\end{proof}

\subsection{Approximate symmetric gradient in $GBD(\rm M)$}
\label{s:appr-sym-M}

In this subsection we show that $\omega \in GBD(\rm M)$ admits an approximate symmetric gradient.

\begin{theorem}[Existence of the approximate symmetric gradient]
\label{t:apsymmanifold}
Let $({\rm M}, g)$ be an $n$-dimensional Riemannian manifold and let $\omega \in GBD(\rm M)$. Then for $\mathcal{H}^n$-a.e. $p \in \rm M$ there exists the approximate symmetric gradient $e(\omega)(p)$ and moreover 
\begin{equation}
\label{e:l1appsym}
\int_M \|e(\omega)(p)\|_{{\rm T}_p{\rm M} \otimes {\rm T}_p{\rm M}} \, \di \mathcal{H}^{n}(p) \leq \lambda(\rm M)\, .
\end{equation}

In addition, if $(P_{\xi})_{\xi \in \mathbb{S}^{n-1}}$ is a family of curvilinear projections on an open set~$U \subseteq \rm M$, then for $\mathcal{H}^{n-1}$-a.e.~$\xi \in \mathbb{S}^{n-1}$ it holds true
\begin{equation}
    \label{e:apsym1.4manifold}
    \nabla \hat{\omega}_y^\xi (t) = (e(\omega))^{\xi}_{y}(\dot{\varphi}_{\xi} (y + t\xi)) \quad \text{for $\mathcal{H}^{n-1}$-a.e. }y \in \xi^\bot, \text{ for a.e. } t \in U_y^\xi\,.
\end{equation}
\end{theorem}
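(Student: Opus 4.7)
The plan is to reduce everything to a local Euclidean statement through charts and then apply Theorem~\ref{t:apsym}. Fix a countable atlas $\{(U_i,\psi_i)\}_i$ of~$\rm M$ with $U_i \Subset \rm M$, and for each~$i$ let $u_i \colon \psi_i(U_i) \to \R^n$ be the coordinate representative of~$\omega$ introduced in~\eqref{e:vpsi}. Proposition~\ref{p:equivalent} yields $u_i \in GBD_F(\psi_i(U_i))$ with the field~$F$ built from the Christoffel symbols via~\eqref{e:chris}, and the associated measure~$\overline\lambda_i$ satisfies~\eqref{e:lambda-bar}; by Remark~\ref{r:lambda-bar} I may shrink~$U_i$ so that $\overline\lambda_i \leq (1+\epsilon)^{n+1}(\psi_i)_\sharp \lambda$ for any prescribed~$\epsilon>0$. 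Theorem~\ref{t:apsym} then supplies $e(u_i) \in L^1(\psi_i(U_i); \mathbb{M}^{n\times n}_{sym})$ together with the Euclidean approximate symmetric gradient~$\tilde e(u_i)$ satisfying~\eqref{e:apsym1.1}, the integral bound~\eqref{e:apsym1.3.99}, and the slice identity~\eqref{e:apsym1.4}.

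For each $p \in U_i$ at which $u_i$ is approximately continuous at $x_0 := \psi_i(p)$ and $\tilde e(u_i)(x_0)$ exists, I define
\begin{displaymath}
 e(\omega)(p)(v) := \tilde e(u_i)(x_0)\,\bar v \cdot \bar v + u_i(x_0) \cdot F(x_0, \bar v), \qquad v \in {\rm T}_p{\rm M},
\end{displaymath}
where $\bar v = d\psi_{i,p}(v) \in \R^n$ is the coordinate representation of~$v$ in the basis $(g_k(p))_k$; by~\eqref{e:apsym32} this equals $e(u_i)(x_0)\bar v \cdot \bar v$, so the $L^1$-bound from~\eqref{e:apsym1.3.99} applies. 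To verify~\eqref{e:apsymmanifold1} I exploit that $\psi_i \circ \exp_p$ and the Euclidean exponential $\exp_{x_0}$ of Definition~\ref{d:exp} both solve the ODE $\ddot\gamma = F(\gamma, \dot\gamma)$ with identical initial data, and therefore agree locally; in particular $\bar v_q = \exp_{x_0}^{-1}(\psi_i(q))$. Taylor-expanding the ODE in~$t$ at~$0$ and using the $2$-homogeneity of $F(x,\cdot)$ yields, for small~$y$,
\begin{align*}
 \exp_{x_0}(y) &= x_0 + y + \tfrac{1}{2} F(x_0, y) + O(|y|^3), \\
 d\exp_{x_0}[y](y) &= y + F(x_0, y) + O(|y|^3), \\
 \exp_{x_0}^{-1}(x) &= (x - x_0) - \tfrac{1}{2} F(x_0, x - x_0) + O(|x - x_0|^3).
\end{align*}
Plugging these into $u_i(x) \cdot d\exp_{x_0}[\bar v_q](\bar v_q) - u_i(x_0)\cdot \bar v_q$ (with $x := \psi_i(q)$) and applying, in the approximate sense, (i) the Euclidean approximate symmetric gradient identity for $u_i$ at $x_0$ and (ii) approximate continuity of $u_i$ at $x_0$ (which absorbs cross terms such as $(u_i(x) - u_i(x_0))\cdot F(x_0, x - x_0) = o(|x - x_0|^2)$), one obtains
\begin{displaymath}
 u_i(x) \cdot d\exp_{x_0}[\bar v_q](\bar v_q) - u_i(x_0) \cdot \bar v_q = \tilde e(u_i)(x_0) \bar v_q \cdot \bar v_q + u_i(x_0)\cdot F(x_0, \bar v_q) + o(|\bar v_q|^2)
\end{displaymath}
in the approximate sense as $q \to p$. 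Since $d_{\rm M}(q, p)^2 = g_{ij}(p)\bar v_q^i\bar v_q^j$ is comparable to $|\bar v_q|^2$ on compact subsets of~$U_i$, this is exactly~\eqref{e:apsymmanifold1}; the uniqueness remark following the definition of approximate symmetric gradient for one-forms then guarantees that $e(\omega)(p)$ is independent of the chart.

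Estimate~\eqref{e:l1appsym} is obtained by mimicking the Vitali-covering argument at the end of the proof of Theorem~\ref{t:apsym}: for $\epsilon > 0$ and $p \in \rm M$, pick a chart $(U, \psi)$ with $p \in U$ so small that, in addition to $\overline\lambda \leq (1 + \epsilon)^{n+1}\psi_\sharp\lambda$, the Jacobian relating $\di\mathcal{H}^n$ on~$U$ and $\di x$ on~$\psi(U)$ differs from~$1$ by at most~$\epsilon$ and the norms induced by $g(p')$ on ${\rm T}_{p'}{\rm M}$ differ from their Euclidean counterparts by at most $1+\epsilon$ for every $p' \in U$. Using $e(\omega)(p')(v) = e(u)(\psi(p'))\bar v \cdot \bar v$ in coordinates together with~\eqref{e:apsym1.3.99}, the integral of $\|e(\omega)\|_{{\rm T}_{p'}{\rm M}\otimes{\rm T}_{p'}{\rm M}}$ over such a ball is bounded by $(1+\epsilon)^{n+3}\lambda$ of the same ball; summing over a countable pairwise disjoint Vitali family and letting $\epsilon\to 0$ produces~\eqref{e:l1appsym}. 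The slice formula~\eqref{e:apsym1.4manifold} is then an immediate translation of its Euclidean counterpart: a family of curvilinear projections on $U \subseteq \rm M$ pulls back through~$\psi$ to a family of curvilinear projections on~$\psi(U)$ with respect to the $F$ of~\eqref{e:chris} (Remark~\ref{r:M2}), $\hat\omega^\xi_y = \hat u^\xi_y$, and substituting the definition of $e(\omega)(p)$ in coordinates into~\eqref{e:apsym1.4} gives~\eqref{e:apsym1.4manifold}. The main obstacle I expect is the Taylor-expansion step, where the error terms of order $|x - x_0|^2$ arising from the discrepancy between $\bar v_q$ and $x - x_0$ (and between $d\exp_{x_0}[\bar v_q](\bar v_q)$ and~$\bar v_q$) are exactly of the critical size, so each contribution must be carefully identified and either cancelled by the $F$-correction built into the definition of $e(\omega)$ or absorbed as $o(|x - x_0|^2)$ via the approximate continuity of~$u_i$.
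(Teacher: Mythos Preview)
Your approach is correct and largely parallel to the paper's, with one genuine difference in how you verify~\eqref{e:apsymmanifold1}. The paper does not Taylor-expand the exponential map; instead it rewrites the manifold quantity through the auxiliary vector fields $\phi_{x_0}$, $\chi_{x_0}$ of~\eqref{e:retr12}--\eqref{e:chi1.1.1} and the normalization factor $c_p(q)=|v_q|_p/|\exp_{x_0}^{-1}(\psi(q))|$, obtaining
\[
\frac{\langle\omega(q),d\exp_p[q](\overline v_q)\rangle_q-\langle\omega(p),\overline v_q\rangle_p}{|v_q|_p}
=\frac{u(\psi(q))\cdot\chi_{\psi(p)}(\psi(q))-u(\psi(p))\cdot\phi_{\psi(p)}(\psi(q))}{c_p(q)^2\,\mathrm d(\psi(p),\psi(q))},
\]
and then invokes formula~\eqref{e:apsym3}, which is an intermediate step inside the \emph{proof} of Theorem~\ref{t:apsym} rather than part of its statement. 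Your route stays at the level of the \emph{statement} of Theorem~\ref{t:apsym}: you feed the second-order expansions of $\exp_{x_0}$, $d\exp_{x_0}[y](y)$, and $\exp_{x_0}^{-1}$ into $u(x)\cdot d\exp_{x_0}[\bar v_q](\bar v_q)-u(x_0)\cdot\bar v_q$ and reduce to~\eqref{e:apsym1.1} plus approximate continuity. This is arguably more self-contained, and the obstacle you flag is not an obstruction: the order-$|z|^2$ discrepancies between $\bar v_q$ and $x-x_0$ and between $d\exp_{x_0}[\bar v_q](\bar v_q)$ and $\bar v_q$ contribute precisely the term $u(x_0)\cdot F(x_0,\cdot)$ that converts $\tilde e(u)$ into $e(u)$ via~\eqref{e:apsym32}, while the cross term $(u(x)-u(x_0))\cdot F(x_0,x-x_0)$ is $o(|x-x_0|^2)$ in the approximate sense because $|F(x_0,z)|\le C|z|^2$ and $u$ is approximately continuous at~$x_0$. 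The Vitali-covering argument for~\eqref{e:l1appsym} and the chart reduction for~\eqref{e:apsym1.4manifold} coincide with the paper's.
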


\begin{proof}
    Let $(U,\psi)$ be a chart of $\rm M$. By Proposition~\ref{p:equivalent}, the function $u\colon \psi(U) \to \R^{n}$ defined in~\eqref{e:M3} belongs to~$GBD_{F}(\psi(U))$ with~$F$ given by~\eqref{e:chris}. Moreover, if $\lambda \in \mathcal{M}_{b} ^{+} (\rm M)$ is the measure appearing in Definition~\ref{d:GBD_M} of $GBD(\rm M)$, defining~$ \overline{\lambda}$ as in~\eqref{e:lambda-bar} we have that for every $V \subseteq \psi(U)$ open, every $\xi^{\bot}\in \mathbb{S}^{n-1}$, and every curvilinear projection~$P \colon V \to \xi^{\bot}$
    \begin{align*}
    \int_{\xi^{\bot}}  \big(\big| | {\rm D} \hat{u}^{\xi}_{y} | ( B^{\xi}_{y} \setminus J^{1}_{\hat{u}^{\xi}_{y}}) + \mathcal{H}^{0} (  B^{\xi}_{y} \cap J^{1}_{\hat{u}^{\xi}_{y}} ) \big) \, \di \mathcal{H}^{n-1} (y) \leq \| \dot{\varphi} \|^{2}_{L^{\infty}} {\rm Lip}(P ; V)^{n-1} \overline{\lambda}(B)\,,
    \end{align*}
    for every $B \in \mathcal{B} (V)$. By Remark~\ref{r:lambda-bar}, for $\epsilon>0$ fixed we may further assume that 
    \begin{align}
    \label{e:eps}
    \overline{\lambda} \leq (1 + \epsilon) \psi_{\sharp}\lambda\,, \qquad {\rm Lip} (\psi; U ) < 1 + \epsilon \,, \qquad  {\rm Lip} (\psi^{-1}; \psi(U)) < 1 + \epsilon\,.
    \end{align}
    
     For $x_0 \in \psi(U)$ consider $\phi_{x_0}(\cdot)$ and $\chi_{x_0}(\cdot)$ the vector fields defined in \eqref{e:retr12} and \eqref{e:chi1.1.1}, respectively, and let $\text{d}(\cdot,x_0)$ be the function defined in \eqref{e:chi1.2}. Notice that, because of the identity $\text{d}_{\rm M}(q,p)= |v_{q}|_{p}$ we can rewrite \eqref{e:apsymmanifold1} as
    \begin{equation}
    \label{e:apsymmanifold97}
    \aplim_{q \to p} \bigg|\frac{\langle\omega(q), d\,\text{exp}_p[q](\overline{v}_q)\rangle_{q} - \langle \omega(p), \overline{v}_q\rangle_{p}}{ |v_{q}|_{p} } - e(\omega)(p)( \overline{v}_q)\bigg|=0,
    \end{equation}
    where $\overline{v}_q := v_q/|v_{q}|_{p}$.
 In addition we have
 \[
 \begin{split}
 \overline{v}_q &= \frac{1}{c_p(q)} \sum_{i=1}^n \phi_{\psi(p)}(\psi(q))_i \, g_i(p) \\
d\, \text{exp}_p[q](\overline{v_q}) &= \frac{1}{c_p(q)} \sum_{i=1}^n \chi_{\psi(p)}(\psi(q))_i \, g_i(q),
 \end{split}
 \]
 where we used that the renormalization constant $c_p(q)$ is the same because of the fact that the Riemannian norm of the velocity field of geodesics are constant in time. We therefore infer that
    \[
    \begin{split}
    \langle \omega(p), \overline{v}_q \rangle_{p} &= \frac{1}{c_p(q)} u(\psi(p)) \cdot \phi_{\psi(p)}(\psi(q)) \\
    \langle \omega(q), d\, \text{exp}_p[q](\overline{v_q}) \rangle_{q} &= \frac{1}{c_p(q)} u(\psi(q)) \cdot \chi_{\psi(p)}(\psi(q)).
    \end{split}
    \]
In addition the definition of $\text{d}(x_0,\cdot)$ gives $\text{d}(\psi(p),\psi(q))= |w_q|$ where $w_q \in \mathbb{R}^n$ is such that $\text{exp}_{\psi(p)}(w_q)=\psi(q)$. Therefore, the geodesic which starts at $p$ with initial velocity $\sum_{i=1}^n (w_q)_i \, g_i(p) \in {\rm T}_p{\rm M}$ reaches at time $t=1$ the point $q$. This means that $v_q= \sum_{i=1}^n (w_q)_i \, g_i(p)$. We also have $\phi_{\psi(p)}(\psi(q))= w_q/|w_q|$. Therefore
\[
\begin{split}
 \overline{v}_q &= \frac{1}{c_p(q)} \sum_{i=1}^n \phi_{\psi(p)}(\psi(q))_i \, g_i(p) =\frac{1}{c_p(q)|w_q|} \sum_{i=1}^n (w_q)_i \, g_i(p) = \frac{v_q}{c_p(q)|w_q|}\,.
 \end{split}
\]
Hence, we get that
\[
\frac{\text{d}_{\rm {M}}(p,q)}{\text{d}(\psi(p),\psi(q))}=\frac{ |v_q|_{p}}{|w_q|} = c_p(q)\,.
\]
In particular,
\[
\frac{\langle\omega(q), d\,\text{exp}_p[q](\overline{v}_q)\rangle_{q} - \langle \omega(p), \overline{v}_q\rangle_{p}}{ |v_q|_{p} } = \frac{u(\psi(q))\cdot \chi_{\psi(p)}(\psi(q)) - u(\psi(p))\cdot \phi_{\psi(p)}(\psi(q))}{c_p(q)^2\text{d}(\psi(p),\psi(q))}
\]
We already know from \eqref{e:apsym3},\eqref{e:apsymq}, and \eqref{e:apsym32} that the following holds true
\begin{align}
\label{e:apsymmanifold98}
\aplim_{q \to p} \bigg|&\frac{u(\psi(q))\cdot \chi_{\psi(p)}(\psi(q)) - u(\psi(p))\cdot \phi_{\psi(p)}(\psi(q))}{c_p(q)^2\text{d}(\psi(p),\psi(q))}\\
\nonumber
& \ \ \ \ \ \ \ \ \ \ \ \ \ \ \ \ \ \  - \frac{e(u)(\psi(p))\phi_{\psi(p)}(\psi(q)) \cdot \phi_{\psi(p)}(\psi(q))}{c_p(q)^2}\bigg|=0,
\end{align}
as soon as we provide a strictly positive lower bound for the function $c_p(\cdot)$ in a neighborhood of $p$. But this follows from the fact that 
\[
c_p(q)=\frac{ |v_q|_{p}}{|w_q|} \geq  \inf_{ \xi \in \mathbb{S}^{n-1}} \sum_{i,j=1}^n \xi_i\xi_j (g_i(p) \cdot g_j(p)) > 0,
\]
since $(g_i \cdot g_j)_{ij}$ is a positive definite matrix.

Defining $e(\omega)(p) \in {\rm T}_p{\rm M} \otimes {\rm T}_p{\rm M}$ as
\begin{equation}
\label{e:apsymmanifold99}
e(\omega)(p)(v) := e(u)(\psi(p))L_p(v)\cdot L_p(v) \qquad \text{for every } v \in {\rm T}_p{\rm M}\,, 
\end{equation}
where $L_p \colon {\rm T}_p{\rm M} \to \mathbb{R}^n$ is the linear map defined as $L_p(v):= \sum_{i=1}^n \langle g^i(p),v \rangle e_i$, we verify that
\[
\begin{split}
e(\omega)(p)(v_q) &:= \RRR |v_q|_{p}^{2} \EEE\frac{e(u)(\psi(p))\phi_{\psi(p)}(\psi(q)) \cdot \phi_{\psi(p)}(\psi(q))}{c_p(q)^2}\\
&= |w_q|^2 e(u)(\psi(p))\phi_{\psi(p)}(\psi(q)) \cdot \phi_{\psi(p)}(\psi(q))) \\
&= e(u)(\psi(p))w_q \cdot w_q = e(u)(\psi(p))L_p(v_q)\cdot L_p(v_q).
\end{split}
\]
By combining the above equalities with \eqref{e:apsymmanifold98} we finally obtain the validity of \eqref{e:apsymmanifold97}.

In order to prove that $\|e(\omega)\|_{{\rm T}_p{\rm M} \otimes {\rm T}_p{\rm M}} \in L^1({\rm M})$ we infer from \eqref{e:apsymmanifold99}
\begin{align}
\label{e:Lp}
\|e(\omega)(p)\|_{{\rm T}_p{\rm M} \otimes {\rm T}_p{\rm M}} = \sup_{v \in {\rm T}_p{\rm M}} \frac{|e(\omega)(p)(v)|}{|v|_{p}^{2} } &= \sup_{v \in {\rm T}_p{\rm M}} \frac{|e(u)(\psi(p))L_p(v)\cdot L_p(v)|}{|v|_{p}^{2}  } \\
&\leq \|L_p\|_{\mathcal{L}({\rm T}_p{\rm M} ;\mathbb{R}^n)}^2 |e(u)(\psi(p))| \nonumber
\end{align}
In view of~\eqref{e:eps}, we further have that for $p \in U$ we have that 
\begin{align}
\label{e:Lp-2}
\|L_p\|_{\mathcal{L}(T_pM;\mathbb{R}^n)} & \leq \sup_{\substack{v \in {\rm T}_{p} {\rm M}\\ |v|_{p} \leq 1}} \, | L_{p} v | \leq ( 1 + \epsilon) \sqrt{\sum_{i=1}^{n} |v_{i}|^{2}} \leq (1 + \epsilon)^{2}
\end{align}
by definition of~$g^{i}$, $g_{i}$, and of~$g(p)(v) = \sum_{i, j=1}^{n} v_{i} v_{j} g_{i} (p) \cdot g_{j} (p)$.

 By~\eqref{e:apsym1.3.99} of Theorem~\ref{t:apsym} and by~\eqref{e:eps} we have that 
\begin{align}
\label{e:eu_M}
    \int_{U}&  \ |e(u)(\psi(p))| \, \di \mathcal{H}^n(p) = \int_{\psi(U)} |e(u)(x)| \, J\psi^{-1}(x) \, \di x  
    \\
    &
    \leq \text{Lip}(\psi^{-1};\psi(U))^n  \int_{\psi(U)} |e(u)(x)|\, \di x  \leq  \text{Lip}(\psi^{-1};\psi(U))^n \overline{\lambda} (\psi(U)) \nonumber
    \\
    &
    \leq (1 + \epsilon)^{n+1} \psi_{\sharp} \lambda (\psi(U)) = (1 + \epsilon)^{n+1}\lambda(U)\,. \nonumber
    \end{align}
    Repeating the covering argument of~\eqref{e:phi-P3}--\eqref{e:phi-99} on ${\rm M}$, we infer from~\eqref{e:Lp}--\eqref{e:eu_M} and from the arbitrariness of~$\epsilon>0$ the validity of \eqref{e:l1appsym}. This concludes the proof of the theorem.\EEE
    \end{proof}

    \section*{Acknowledgments}
The work of the authors was partially funded by the Austrian Science Fund (\textbf{FWF}) through the project P35359-N. S.A. was also supported by the FWF project ESP-61. E.T. further acknowledges the support of the FWF projects Y1292 and F65. Finally, the authors acknowledge the warm hospitality of ESI, Vienna during the workshop \emph{Between Regularity and Defects: Variational and Geometrical Methods in Materials Science}, where part of this research was carried out.

\appendix

\bibliographystyle{siam}
\bibliography{biblio}









\end{document}